%
%

\documentclass[11pt]{amsart}

\usepackage[a4paper,hmargin=3.5cm,vmargin=3.9cm]{geometry}
\usepackage{amsfonts,amssymb,amscd,amstext}
\usepackage{graphicx}
\usepackage[dvips]{epsfig}
\usepackage{quoting}
\usepackage{hyperref}


\usepackage{fancyhdr}
\pagestyle{fancy}
\fancyhf{}

\input xy
\xyoption{all}



\usepackage{enumerate}
\usepackage{titlesec}
\usepackage{mathrsfs}

\pretolerance=2000
\tolerance=3000

\linespread{1.1}



\setlength{\parskip}{0.5em}

\titleformat{\section}
{\filcenter\bfseries\large} {\thesection{.}}{0.2cm}{}
\titleformat{\subsection}[runin]
{\bfseries} {\thesubsection{.}}{0.15cm}{}[.]
\titleformat{\subsubsection}[runin]
{\em}{\thesubsubsection{.}}{0.15cm}{}[.]

\usepackage[up,bf]{caption}



\newtheorem{theorem}{Theorem}[section]
\newtheorem{proposition}[theorem]{Proposition}
\newtheorem{lemma}[theorem]{Lemma}
\newtheorem{claim}[theorem]{Claim}
\newtheorem{corollary}[theorem]{Corollary}

\theoremstyle{definition}

\newtheorem{remark}[theorem]{Remark}

\numberwithin{equation}{section}
\numberwithin{figure}{section}




\newcommand\Cscr{\mathscr{C}}

\newcommand\Oscr{\mathscr{O}}


\newcommand\C{\mathbb{C}}

\newcommand\N{\mathbb{N}}

\newcommand\R{\mathbb{R}}

\newcommand\Z{\mathbb{Z}}



\newcommand\igot{\mathfrak{i}}

\renewcommand\igot{\mathfrak{i}}

%
%

%
%

\renewcommand\imath{\igot}

%
%

%
%

%
%

\newcommand\dist{\mathrm{dist}}

\newcommand\length{\mathrm{length}}

\newcommand\Flux{\mathrm{Flux}}

\newcommand\CMI{\mathrm{CMI}}
\newcommand\NC{\mathrm{NC}}

\def\dist{\mathrm{dist}}

\def\length{\mathrm{length}}
\def\Flux{\mathrm{Flux}}

\newcommand\CMIf{\mathrm{CMI_{full}}}
\newcommand\CMIfc{\mathrm{CMI_{full}^c}}
\newcommand\CMInf{\mathrm{CMI_{nf}}}
\newcommand\CMInfc{\mathrm{CMI_{nf}^c}}
\newcommand\NCf{\mathrm{NC_{full}}}

\newcommand\RNCf{\Re\mathrm{NC_{full}}}
\newcommand\RNCfc{\Re\mathrm{NC_{full}^c}}
\newcommand\NCnf{\mathrm{NC_{nf}}}
\newcommand\NCnfc{\mathrm{NC_{nf}^c}}
\newcommand\RNC{\Re\mathrm{NC}}
\newcommand\RNCnf{\Re\mathrm{NC_{nf}}}
\newcommand\RNCnfc{\Re\mathrm{NC_{nf}^c}}
\newcommand\Of{\mathscr O_{\mathrm{full}}}
\newcommand\Onf{\mathscr O_{\mathrm{nf}}}
\newcommand\boldA{\mathbf A}


\usepackage{color}

\begin{document}


\fancyhead[LO]{A strong parametric h-principle for complete minimal surfaces}
\fancyhead[RE]{A.\ Alarc\'on and F.\ L\'arusson}
\fancyhead[RO,LE]{\thepage}

\thispagestyle{empty}



\begin{center}
{\bf \LARGE A strong parametric h-principle
\\ \vspace{1mm}  
for complete minimal surfaces} 

\medskip

%
%
{\large\bf Antonio Alarc\'on \; and \; Finnur L\'arusson}
\end{center}


%
%

\medskip

\begin{quoting}[leftmargin={5mm}]
{\small
\noindent {\bf Abstract}\hspace*{0.1cm}
We prove a parametric h-principle for complete nonflat conformal minimal immersions of an open Riemann surface $M$ into $\R^n$, $n\geq 3$.  It follows that the inclusion of the space of such immersions into the space of all nonflat conformal minimal immersions is a weak homotopy equivalence.  When $M$ is of finite topological type, the inclusion is a genuine homotopy equivalence.  By a parametric h-principle due to Forstneri\v c and L\'arusson, the space of complete nonflat conformal minimal immersions therefore has the same homotopy type as the space of continuous maps from $M$ to the punctured null quadric.  Analogous results hold for holomorphic null curves $M\to\C^n$ and for full immersions in place of nonflat ones.


\noindent{\bf Keywords}\hspace*{0.1cm} 
Riemann surface, minimal surface, complete minimal surface, null curve, complete null curve, h-principle, Oka manifold, absolute neighbourhood retract


\noindent{\bf Mathematics Subject Classification (2020)}\hspace*{0.1cm} 
Primary 53A10. Secondary 30F99, 32E30, 32H02, 32Q56, 54C55, 55M15

}

\end{quoting}



\section{Introduction and main results}
\label{sec:intro}

\noindent
Over the past ten years or so, powerful complex-analytic methods from Oka theory have been introduced and applied in the classical theory of minimal surfaces in Euclidean spaces.  For an overview of this development, see the survey \cite{AlarconForstneric2019JAMS}.  For a detailed exposition, see the monograph \cite{AlarconForstnericLopez2021}.  Complete surfaces are of central importance in Riemannian geometry and in particular in the theory of minimal surfaces. Some of the fundamental results on complete minimal surfaces that have been proved using Oka theory are the following.  Here, $M$ denotes an open Riemann surface, always assumed connected, and $n\geq 3$.
\begin{enumerate}[A]
\item[$\bullet$]  The space $\CMInfc(M, \R^n)$ of complete nonflat conformal minimal immersions $M\to\R^n$ is dense (with respect to the compact-open topology) in the space $\CMInf(M,\R^n)$ of all nonflat conformal minimal immersions (\cite[Theorem 7.1]{AlarconForstnericLopez2016MZ}; the case of $n=3$ follows from \cite[Theorem 5.6]{AlarconLopez2012JDG}, which slightly pre\-dates the introduction of Oka theory in minimal surface theory).  In more recent work, the density theorem has been strengthened to Mergelyan and Carleman approximation theorems including Weierstrass interpolation and other additional features (see \cite{AlarconCastro-Infantes2019APDE}, \cite[Section 3.9]{AlarconForstnericLopez2021}, and \cite{Castro-InfantesChenoweth2020}).
\medskip
\item[$\bullet$]  Every nonflat conformal minimal immersion $M\to\R^n$ can be deformed, through such immersions, to a complete one (the case of $n=3$ is part of \cite[Theorem 1.2]{AlarconForstneric2018Crelle}; the proof there is easily adapted to the general case).  In other words, the inclusion $\CMInfc(M,\R^n)\hookrightarrow \CMInf(M,\R^n)$ induces a surjection of path components.  As far as we know, further homotopy-theoretic properties of this inclusion have not been studied in any previous work.
\end{enumerate}

Recall that a conformal immersion $u\colon M\to\R^n$ is minimal if and only if it is a harmonic map. Such an immersion is said to be flat if it maps $M$ into an affine 2-plane in $\R^n$.  Equivalently, the holomorphic map $\partial u/\theta$ from $M$ into the punctured null quadric $\boldA_*=\{z\in\C^n:z_1^2+\cdots+z_n^2=0, z\neq 0\}$ is flat, that is, maps $M$ into an affine complex line in $\C^n$ \cite[Lemma 12.2]{Osserman1986}.  Here, $\theta$ is a nowhere-vanishing holomorphic 1-form on $M$, chosen once and for all, and we denote by $\partial u$ the $(1,0)$-differential of $u$.  Nonflatness is a very mild and natural nondegeneracy condition.  Its key significance in Oka-theoretic proofs is that it allows $\partial u/\theta$ to be realised as the core of a period dominating spray of holomorphic maps into  the Oka manifold $\boldA_*$ (such sprays first appeared in \cite[Lemma 5.1]{AlarconForstneric2014IM}).  Recall  also that the flux $\Flux(u)$ of a conformal minimal immersion $u\colon M\to\R^n$ is the cohomology class of its conjugate differential $d^cu=\imath(\bar\partial u-\partial u)$ in $H^1(M,\R^n)$.  The flux is naturally identified with the group homomorphism $\Flux(u)\colon H_1(M,\Z)\to\R^n$ given by
\[
	\Flux(u)([C]):=\int_C d^c u=-2\imath\int_C \partial u,\quad [C]\in H_1(M,\Z).
\]
We view the cohomology group $H^1(M,\C^n)$ as the de Rham group of $n$-tuples of holomorphic $1$-forms on $M$ modulo exact forms, with the quotient topology induced from the compact-open topology.  The subgroup $H^1(M,\R^n)$ carries the subspace topology.

Our first theorem is a strong parametric h-principle for complete minimal surfaces that subsumes as very particular consequences the density and deformation results described above.
%
%
\begin{theorem}  \label{th:main}  
Let $M$ be an open Riemann surface, $P$ be a compact metric space, and $u\colon M\times P\to\R^n$, $n\ge 3$, be a continuous map such that $u_p:=u(\cdot,p)\colon M\to\R^n$ is a nonflat conformal minimal immersion for all $p\in P$. 

If $K\subset M$ is compact and $Q\subset P$ is closed, then for any $\epsilon>0$ there is a homotopy $u^t\colon M\times P\to\R^n$, $t\in [0,1]$, satisfying the following conditions.
\begin{enumerate}[\rm (i)]
\item  The map $u_p^t:=u^t(\cdot,p)\colon M\to\R^n$ is a nonflat conformal minimal immersion for all $(p,t)\in P\times[0,1]$.
\smallskip
\item  $u_p^t=u_p$ for all $(p,t)\in (P\times\{0\})\cup (Q\times[0,1])$.
\smallskip
\item  $|u_p^t(x)-u_p(x)|<\epsilon$ for all $x\in K$ and $(p,t)\in P\times [0,1]$.
\smallskip
\item  $u_p^t$ is complete for all $(p,t)\in (P\setminus Q)\times (0,1]$.
\end{enumerate}
Furthermore, given a homotopy $F^t\colon P\to H^1(M,\R^n)$, $t\in [0,1]$, such that $F^t(p)=\Flux(u_p)$ for all $(p,t)\in (P\times\{0\})\cup (Q\times[0,1])$ and $F^t(p)\vert_K=\Flux(u_p)\vert_K$ for all $(p,t)\in P\times [0,1]$, we can choose $u^t$ such that
\begin{enumerate}[\rm (i)]
\item[\rm (v)]  $\Flux(u_p^t)= F^t(p)$ for all $(p,t)\in P\times [0,1]$.
\end{enumerate}
\end{theorem}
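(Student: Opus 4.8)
\emph{Overview.} The plan is to carry out, in a parametric form, the recursive Calabi--Yau/Nadirashvili-type construction behind the deformation theorem of Alarc\'on--Forstneri\v c~\cite{AlarconForstneric2018Crelle}, arranging the recursion so that completeness holds at \emph{every} positive time. Fix a nowhere-vanishing holomorphic $1$-form $\theta$ on $M$, a base point $x_0$, and a Riemannian metric. Since nonflatness is $C^1$-open and $P$ is compact, I would first enlarge $K$ so that it lies in the interior of a smoothly bounded compact Runge domain $M_0$ on which every $u_p$ is nonflat, and then fix a normal exhaustion $M=\bigcup_{j\ge0}M_j$ by connected, smoothly bounded, compact Runge domains in which $H_1(M_j,\Z)$ gains at most one generator at each step; let $A_j$ be the closed region between $bM_{j-1}$ and $bM_j$. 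The aim is to build continuous families $u^{t,j}\colon M\times P\to\R^n$, $t\in[0,1]$, of nonflat conformal minimal immersions with $u^{0,j}_p=u_p$ for all $j$ and $u^{t,j}_p=u_p$ for $p\in Q$, converging uniformly on compacts to the required homotopy $u^t$; at step $j$ one adds a definite amount of intrinsic length near $bM_j$ by a modification that is $C^0$-small, summably small in $j$, and damped to $0$ as $t\to0$ and as $p\to Q$.

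\emph{The parametric modification step (the crux).} Fix a continuous $\eta\colon[0,1]\to[0,1]$ with $\eta^{-1}(0)=\{0\}$ and positive $\delta_j$ with $\sum_j\delta_j<\epsilon$. The key lemma is: given $u^{\cdot,j-1}$ on a neighbourhood of $M_j$, there is a continuous family $u^{\cdot,j}$ of nonflat conformal minimal immersions, equal to $u^{\cdot,j-1}$ off a neighbourhood of $M_j$ and to $u_p$ when $p\in Q$ or $t=0$, with $\|u^{t,j}_p-u^{t,j-1}_p\|_{C^0(M_j)}<\eta(t)$ (hence, the data $\partial u/\theta$ being holomorphic, $\|u^{t,j}_p-u^{t,j-1}_p\|_{C^1(M_{j-1})}<\delta_j$ by the Cauchy estimates), with $\Flux(u^{t,j}_p)$ equal to $\Flux(u^{t,j-1}_p)$ on $H_1(M_{j-1},\Z)$ and to $F^t(p)$ on any new cycle of $H_1(M_j,\Z)$, and with $\dist_{u^{t,j}_p}(K,bM_j)>\dist_{u^{t,j-1}_p}(K,bM_{j-1})+1$ whenever $t>0$ and $\dist(p,Q)>0$. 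To prove it I would follow the non-parametric Nadirashvili/Jorge--Xavier argument in its null-curve/Oka formulation: place a labyrinth $\Lambda_j$ of compact walls in $A_j$, of fixed combinatorics but fine enough to work against all the (uniformly comparable, by compactness of $P\times[0,1]$) induced metrics of the $u^{t,j-1}_p$; using that $\boldA_*$ is an Oka manifold, apply parametric Mergelyan approximation on the admissible set $M_{j-1}\cup\Lambda_j$ to move $\partial u^{t,j-1}_p/\theta$, keeping it close to itself on $M_{j-1}$ but making it large with fast oscillation along $\Lambda_j$ --- large enough that any arc crossing $\Lambda_j$ gains $u$-length $>1$, and oscillatory enough that the displacement of the immersion stays below any prescribed positive bound, which is exactly what allows the damping by $\eta(t)$ and $\dist(p,Q)$; and finally use that nonflatness on $M_0$ realises $\partial u^{t,j-1}_p/\theta$ as the core of a period-dominating holomorphic spray into $\boldA_*$ (chosen continuously in $(p,t)$ by patching spray parameters over a finite cover of the compact $P\times[0,1]$) to correct the periods so that the flux is exactly $F^t(p)$. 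Integrating the resulting $\boldA_*$-valued family $f^{t,j}_p$ by $u^{t,j}_p(x):=u^{t,j-1}_p(x_0)+\Re\int_{x_0}^x f^{t,j}_p\,\theta$ yields $u^{\cdot,j}$; nonflatness persists throughout because it is $C^1$-open and the sprays may be kept nonflat on $M_0$.

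\emph{Assembly.} By the $C^0$- and $C^1$-estimates, for each fixed $i$ the families $u^{t,j}_p$ are uniformly Cauchy on $M_i\times P\times[0,1]$, in $C^0$ and --- by the Cauchy estimates --- in $C^\infty$ on $M_i$; hence $u^t_p:=\lim_j u^{t,j}_p$ is a continuous map $M\times P\times[0,1]\to\R^n$ all of whose slices are smooth nonflat conformal minimal immersions, with $u^0_p=u_p$ and $u^t_p=u_p$ on $Q$, so (i) and (ii) hold. The cumulative change on $K$ is $<\sum_j\delta_j<\epsilon$, which is (iii). The period bookkeeping --- each cycle's flux fixed once installed, after a preliminary modification on $M_0$ away from $K$ that sets $F^t(p)$ on $H_1(M_0,\Z)$ --- gives $\Flux(u^t_p)=F^t(p)$, which is (v). And the gain of more than $1$ at each step, which survives the $C^1$-small later modifications on the earlier annuli, gives $\dist_{u^t_p}(K,bM_j)\to\infty$ for $p\notin Q$ and $t>0$, i.e.\ completeness (iv). Thus $t\mapsto u^t$ is the desired homotopy.

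\emph{Main obstacle.} The real difficulty is the parametric modification step: making the interplay of the metric-expanding labyrinth, the Mergelyan approximation into the Oka manifold $\boldA_*$, and the period (flux) domination continuous in $p\in P$ and rigid over $Q$, entirely within the class of nonflat conformal minimal immersions; and tuning the damping (the function $\eta(t)$ and the summable $\delta_j$) so that the single homotopy produced is complete at every $t\in(0,1]$, not just at $t=1$.
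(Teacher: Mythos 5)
Your overall strategy, which is to exhaust $M$ by Runge domains, place Jorge--Xavier-type labyrinths in the intermediate annuli, use period-dominating sprays into $\boldA_*$, and invoke a parametric Oka/Mergelyan step, is the same general plan as the paper's, and the assembly (Cauchy estimates, summable errors, stability of nonflatness and of a distance bound already achieved) is standard. But there is one genuine gap in the parametric modification step that I think is fatal as written, and a secondary divergence worth pointing out.

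\textbf{The gap.} You want the modification at step $j$ to satisfy $\|u^{t,j}_p-u^{t,j-1}_p\|_{C^0(M_j)}<\eta(t)$ with $\eta(t)\to 0$ as $t\to 0$, and simultaneously $\dist_{u^{t,j}_p}(K,bM_j)>\dist_{u^{t,j-1}_p}(K,bM_{j-1})+1$ for \emph{every} $t>0$ and $p\notin Q$. These two requirements are incompatible. The labyrinth $\Lambda_j$ is a compact subset of $\mathring A_j\Subset\mathring M_j$. The difference $u^{t,j}_p-u^{t,j-1}_p$ is harmonic, so a $C^0$-bound of size $\eta(t)$ on $M_j$ propagates, by interior Cauchy estimates, to a $C^1$-bound of size $C\,\eta(t)$ on $\Lambda_j$, with $C$ depending only on $\dist(\Lambda_j,bM_j)$. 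Hence for small $\eta(t)$ the induced metric on $\Lambda_j$ is uniformly close to the original, and the through-the-labyrinth mechanism of Jorge--Xavier (which requires the metric to be boosted by a large factor on $\Lambda_j$) cannot produce a fixed intrinsic gain of $+1$. ``Fast oscillation'' does not rescue this: for a map holomorphic (or harmonic) on a planar neighbourhood, a $C^0$-bound on a two-dimensional domain controls the derivative on compact subsets irrespective of the oscillation frequency — a bound $\epsilon\sin(nz)$ is only $\epsilon$-small on a real segment, not on a surrounding disc. (The avoidance mechanism alone, for a fixed labyrinth, gives a \emph{fixed} finite lower bound, not arbitrarily large gains.) The paper avoids this trap precisely by \emph{not} asking for a damped $C^0$-bound on all of $M_j$: the $C^0$-estimate is required only on a compact $\Delta\Subset\mathring M_j$ containing $K_{j-1}$, and the distance increase at step $j$ is demanded only on $T_j\times[\tfrac1{j+1},1]$, a set disjoint from $(P\times\{0\})\cup(Q\times[0,1])$; the identity $u^{t,j}_p=u_p$ there is obtained exactly from the construction (the deforming function is identically $1$ on that set), not by a $C^0$-damping. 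Completeness at every $(p,t)\in(P\setminus Q)\times(0,1]$ then falls out because the sets $T_j\times[\tfrac1{j+1},1]$ exhaust $(P\setminus Q)\times(0,1]$.

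\textbf{The divergence, and a secondary issue.} The paper's modification step is a parametric L\'opez--Ros deformation: it writes $\psi_a\pm\imath\psi_b=f,g$ with $fg=\Psi$ for a chosen pair of coordinates, multiplies/divides $f,g$ by a $\C^*$-valued $h$, and leaves the remaining $n-2$ components of $\partial u/\theta$ untouched. The function $h$ is built from a multiplicative period-dominating spray plus the parametric Oka principle with approximation for maps into $\C^*$. This structure gives the key AM--GM lower bound $|\tilde\psi_a|^2+|\tilde\psi_b|^2\ge|\Psi|\ge\varrho$ on the whole annulus, so the metric stays bounded below off the labyrinth; and it requires the extra work (a finite cover of $T\times[r,1]$ and a finite recursion over $K=K_0\subset\cdots\subset K_\ell=L$) to handle the fact that for $n\ge4$ no single pair $(a,b)$ of coordinates works for all $(p,t)$. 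Your direct parametric Mergelyan into $\boldA_*$ sidesteps the coordinate-choice problem, but it loses the AM--GM lower bound: approximation into $\boldA_*$ does not by itself prevent the new $\partial u/\theta$ from being small on $A_j\setminus\Lambda_j$, and without a metric lower bound there, the ``avoid the labyrinth'' half of the estimate fails. If you pursue the direct route you would need to impose (and then achieve) an additional lower bound on $|\partial u/\theta|$ off $\Lambda_j$, which is extra and nontrivial. Finally, the flux bookkeeping ``set $F^t(p)$ on any new cycle of $H_1(M_j,\Z)$'' is the right idea, but in the parametric case it needs the full strength of \cite[Lemma 3.1]{ForstnericLarusson2019CAG} (adjusting the period of all $u^t_p$, not only $u^1_p$), as the paper does in its Lemma~\ref{lem:flux}; your sketch does not engage with this.
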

Note that the sets $(P\times\{0\})\cup (Q\times[0,1])$ in (ii) and $(P\setminus Q)\times (0,1]$ in (iv) partition the parameter space $P\times [0,1]$.

A parametric h-principle for complete nonflat conformal minimal immersions, formulated as parametric h-principles usually are, would provide condition {\rm (iv)} only for the immersions $u_p^1$, $p\in P$, under the additional assumption that the given immersions $u_p$ are complete for all $p\in Q$.  Theorem \ref{th:main} is much stronger than this, which is the reason for the term {\em strong} in the title of this paper.  Even the basic h-principle for complete minimal surfaces that we obtain from Theorem \ref{th:main} by taking $P$ to be a singleton and $Q$ to be empty is a considerable improvement on the strongest previously known result in this direction, which is \cite[Theorem 1.2]{AlarconForstneric2018Crelle}.  It was proved by means of the Oka principle for sections of ramified holomorphic maps with Oka fibres (see \cite{Forstneric2003FM} or \cite[Section 6.13]{Forstneric2017E}), a tool that is not available in our general parametric setting.  

The earliest examples of homotopy principles (h-principles for short) that the authors are aware of are, in the real setting, the Whitney-Graustein theorem of 1937, stating that smooth immersions of the circle in the plane are classified up to isotopy by the winding numbers of their tangent maps, and, in the complex setting, Oka's theorem of 1939, stating, in modern terms, that a holomorphic line bundle on a Stein manifold is trivial if it is topologically trivial.  The former result was the beginning of a vast program of research within differential topology; modern Oka theory has its roots in the latter.  Around 1970, Gromov formalised the concept of an h-principle for a partial differential relation as saying that every formal solution of the relation can be deformed to a genuine solution (see \cite{EliashbergMishachev2002, Gromov1973, Gromov1986}).  The obstruction to the existence of a formal solution is usually purely topological, and if it vanishes, then a genuine solution exists.  A parametric h-principle deals with families of solutions depending on a parameter in a space that is almost always compact.  It means that the inclusion of the space of genuine solutions into the space of formal solutions is a weak homotopy equivalence.  This kind of principle is most clearly reflected in our Corollary \ref{cor:homotopy-type-of-complete} below: a formal conformal minimal immersion $M\to\R^n$ (complete or not) can, using the trivialisation of the cotangent bundle of $M$ given by a form $\theta$ as above, be viewed as a continuous map $M\to\boldA_*$.  It is noteworthy that the applications of Oka theory in the theory of minimal surfaces (such as here, in \cite{ForstnericLarusson2019CAG}, and going back to \cite{AlarconForstneric2014IM}) also involve an h-principle from real analysis, namely Gromov's h-principle for ample partial differential relations, proved using his method of convex integration.  The prototypical example of such an h-principle is the Whitney-Graustein theorem. 

Theorem \ref{th:main} is proved in Sections \ref{sec:lemma}, \ref{sec:flux}, and \ref{sec:proof}. In Section \ref{sec:lemma}, which is the core of the paper, we obtain a parametric completeness lemma to the effect that, given compact Hausdorff spaces $Q\subset P$ and a homotopy of nonflat conformal minimal immersions $u_p^t\colon L\to\R^n$, $(p,t)\in P\times [0,1]$, on a compact domain $L$ in an open Riemann surface, one can deform the homotopy near the boundary of $L$ in order to arbitrarily increase the boundary distance from a fixed interior point of all the immersions $u_p^t$ with $(p,t)$ outside a neighbourhood of $(P\times\{0\})\cup (Q\times[0,1])$ while keeping fixed those with $(p,t)$ in that set; see Lemma \ref{lem:distance}. The proof relies on a finite recursive application of a sort of parametric L\'opez-Ros deformation for minimal surfaces in $\R^n$ which we develop in Lemma \ref{lem:pair}; we refer to the beginning of Section \ref{sec:lemma} for a brief explanation.  In Section \ref{sec:flux} we extend the arguments in \cite{ForstnericLarusson2019CAG} in order to control the flux of all the immersions in the homotopy; see Lemma \ref{lem:flux}. Finally, we prove Theorem \ref{th:main} in Section \ref{sec:proof} by a standard inductive application of the results in Sections \ref{sec:lemma} and \ref{sec:flux}.

Part (a) of the following corollary to Theorem \ref{th:main} is immediate.  Part (b) is proved in Section \ref{sec:finite} using a method first developed in \cite{Larusson2015PAMS}.  The mapping spaces considered here are too large to have a CW structure, but when the open Riemann surface $M$ has finite topological type, an h-principle can be used to show that they are absolute neighbourhood retracts and therefore have the homotopy type of a CW complex.  The Whitehead lemma then implies that a weak homotopy equivalence between them is a genuine homotopy equivalence.

\begin{corollary}  \label{cor:weak-eq-for-minimal}
Let $M$ be an open Riemann surface and $n\geq 3$.  

{\rm (a)}  The inclusion $\CMInfc(M,\R^n) \hookrightarrow \CMInf(M,\R^n)$ is a weak homotopy equivalence.

{\rm (b)}  If $M$ is of finite topological type, then the inclusion is a homotopy equivalence.
\end{corollary}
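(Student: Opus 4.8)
The plan is to deduce both parts from Theorem \ref{th:main}. For part (a), to show that the inclusion $\iota\colon\CMInfc(M,\R^n)\hookrightarrow\CMInf(M,\R^n)$ is a weak homotopy equivalence, I would verify that it induces a bijection on path components and an isomorphism on all higher homotopy groups; equivalently, by the standard lifting criterion, that for every $k\geq 0$ every continuous map $f\colon S^k\to\CMInf(M,\R^n)$ that sends the basepoint into $\CMInfc(M,\R^n)$ is homotopic rel basepoint, through maps into $\CMInf(M,\R^n)$, to a map into $\CMInfc(M,\R^n)$, and that two such maps into $\CMInfc(M,\R^n)$ that are homotopic in $\CMInf(M,\R^n)$ are already homotopic in $\CMInfc(M,\R^n)$. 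Both assertions are instances of the same relative statement, so it suffices to treat pairs $(P,Q)=(D^k, \partial D^k)$ (for surjectivity of $\pi_k$ one takes $Q=\{*\}$; for injectivity one uses $P=D^k$ with $Q=\partial D^k=S^{k-1}$ carrying the given homotopy). Concretely: given a continuous $u\colon M\times P\to\R^n$ with each $u_p$ a nonflat conformal minimal immersion and with $u_p$ complete for all $p\in Q$, I want a homotopy from $u$ to a map all of whose slices are complete, fixing the slices over $Q$.

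This is exactly what Theorem \ref{th:main} provides, with one caveat: Theorem \ref{th:main} keeps $u^t_p=u_p$ for $p\in Q$ and makes $u^1_p$ complete only for $p\in P\setminus Q$, whereas here the slices over $Q$ are \emph{already} complete and must stay inside $\CMInfc$ throughout. Since the homotopy $u^t$ fixes those slices entirely (condition (ii)), they remain complete, so $u^t$ is a homotopy inside $\CMInf(M,\R^n)$ from $u=u^0$ to $u^1$, it is constant on $Q$, and $u^1$ has all slices complete by (iv) together with the completeness of the $Q$-slices. Applying this with $(P,Q)=(D^k,\{*\})$ gives surjectivity of $\pi_k(\iota)$; applying it with $(P,Q)=(D^k, S^{k-1})$, where $S^{k-1}$ carries a homotopy between two maps into $\CMInfc$, and then reparametrising, gives injectivity. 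Taking $K$ an arbitrary compact set and $\epsilon$ arbitrary is not needed for the weak homotopy equivalence, but costs nothing. One should choose a single exhaustion $D^k$ of the relevant parameter sphere/disk; since spheres and disks are compact metric spaces, the hypotheses of Theorem \ref{th:main} are met verbatim. This completes part (a).

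For part (b), assume $M$ has finite topological type. Following the method of \cite{Larusson2015PAMS}, the strategy is: (1) show that both $\CMInf(M,\R^n)$ and $\CMInfc(M,\R^n)$ are absolute neighbourhood retracts (ANRs), hence have the homotopy type of CW complexes; (2) invoke Whitehead's theorem, which upgrades the weak homotopy equivalence of part (a) to a genuine homotopy equivalence. For step (1), the key input is the parametric h-principle of Forstneri\v c and L\'arusson identifying $\CMInf(M,\R^n)$ up to weak homotopy equivalence with the space $C(M,\boldA_*)$ of continuous maps into the punctured null quadric; when $M$ is of finite topological type it is homotopy equivalent to a finite CW complex (a finite wedge of circles), so $C(M,\boldA_*)$ has the homotopy type of a CW complex, and one shows that the relevant minimal-surface spaces are ANRs by exhibiting local contractions built from the h-principle — roughly, the density/approximation with interpolation results (the Mergelyan theorem for conformal minimal immersions and its complete version) let one contract a neighbourhood of a given immersion inside the space by pushing along a homotopy produced by Theorem \ref{th:main}. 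The main obstacle here, and the part requiring the most care, is establishing the ANR property for $\CMInfc(M,\R^n)$: completeness is not an open condition in the compact-open topology, so one cannot naively restrict an ANR neighbourhood from $\CMInf$; instead one must use Theorem \ref{th:main} itself (with the parameter space being a neighbourhood in the mapping space) to produce the required local homotopies that push a neighbourhood back into $\CMInfc$, mimicking the construction in \cite{Larusson2015PAMS} where exactly such a strong parametric statement is what makes the argument go through. Once both spaces are known to be ANRs, the Whitehead lemma finishes the proof.
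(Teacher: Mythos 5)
Your part (a) is what the paper means by ``immediate'': apply Theorem \ref{th:main} to pairs $(P,Q)=(D^k,S^{k-1})$, note that the resulting homotopy fixes all $Q$-slices (condition (ii)) and makes every other slice complete at $t=1$ (condition (iv)), so every map of pairs $(D^k,S^{k-1})\to(\CMInf(M,\R^n),\CMInfc(M,\R^n))$ compresses rel boundary into $\CMInfc(M,\R^n)$; this, plus the corresponding statement for $\pi_0$, is the weak equivalence. Your argument here is correct and is the paper's.

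For part (b), your scaffolding matches the paper's (show $\CMInfc(M,\R^n)$ is an ANR, then upgrade the weak equivalence by Whitehead/Palais), and you correctly identify both the difficulty — completeness is not open in the compact-open topology, so the ANR property cannot be inherited simply by restricting neighbourhoods from $\CMInf(M,\R^n)$ — and the remedy, namely that Theorem \ref{th:main} must be used to push families back into $\CMInfc$. But your sketch stops short of the actual mechanism. What the paper proves is an abstract reduction, Proposition \ref{pr:subspace-is-anr}: if $(X,d)$ is a second-countable metric ANR and $Y\subset X$ has the property that every continuous $f\colon P\to X$ from a finite polyhedron $P$, with $f(Q)\subset Y$ on a subpolyhedron $Q$, can be homotoped rel $Q$ and $\epsilon$-close to $f$ to a map into $Y$, then $Y$ is an ANR. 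The proof goes through the Dugundji--Lefschetz characterisation of ANRs (extending maps from subcomplexes of countable locally finite simplicial complexes, simplex by simplex, keeping images in prescribed cover elements, via an iterated application of the parametric hypothesis over an exhaustion by finite subcomplexes). It does not proceed via ``local contractions,'' and the parameter spaces used are finite polyhedra, not neighbourhoods of an immersion in the mapping space; nor does the Mergelyan theorem enter at this stage. Applying the proposition with $X=\CMInf(M,\R^n)$ (an ANR by \cite{ForstnericLarusson2019CAG}) and $Y=\CMInfc(M,\R^n)$, with the hypothesis supplied exactly by Theorem \ref{th:main}, is what closes part (b). So you have identified the right ingredients and the right method, but the abstract reduction --- the technical heart of the argument --- is absent from your sketch, and the mechanism you gesture at in its place does not match the proof that is actually needed.
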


Part (a) means that the inclusion induces a bijection of path components $\pi_0(\CMInfc(M,\R^n)) \to \pi_0(\CMInf(M,\R^n))$ and an isomorphism of homotopy groups
\[
	\pi_k(\CMInfc(M,\R^n),u) \longrightarrow \pi_k(\CMInf(M,\R^n),u)
\]
for every $k\geq 1$ and every base point $u\in \CMInfc(M,\R^n)$.
 By (b), when $M$ is of finite topological type, there is a homotopy inverse $\eta:\CMInf(M, \R^n) \to \CMInfc(M, \R^n)$ to the inclusion.  This means that there is a way to associate to every immersion $u$ a complete immersion $\eta(u)$ that is homotopic to $u$.  Moreover, if $u$ is complete to begin with, then there is such a homotopy through complete immersions.  The main point is that $\eta(u)$ and the homotopies depend continuously on $u$.   
 
  \begin{remark}
Theorem \ref{th:main} implies the following stronger version of Corollary \ref{cor:weak-eq-for-minimal}(a).  If $X$ is a subspace of $\CMInf(M,\R^n)$ containing $\CMInfc(M,\R^n)$, then the inclusions $\CMInfc(M,\R^n) \hookrightarrow X\hookrightarrow \CMInf(M,\R^n)$ are weak homotopy equivalences.
 \end{remark}

By the next corollary, which is a direct consequence of Theorem \ref{th:main},  $\CMInfc(M, \R^n)$ is dense in $\CMInf(M, \R^n)$ in a strong sense.

\begin{corollary}\label{co:extension}
If $M$ is an open Riemann surface and $Q\subset P$ are compact metric spaces such that $Q$ is a retract of $P$, then every continuous map $Q\to\CMInf(M,\R^n)$, $n\ge 3$, extends to a continuous map $P\to\CMInf(M,\R^n)$ that takes $P\setminus Q$ into $\CMInfc(M,\R^n)$. 
\end{corollary}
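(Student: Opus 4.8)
The plan is to deduce the statement directly from Theorem~\ref{th:main} by pre-composing with a retraction. Fix a retraction $\rho\colon P\to Q$, which exists by hypothesis, and let $f\colon Q\to\CMInf(M,\R^n)$ be continuous. The composition $f\circ\rho\colon P\to\CMInf(M,\R^n)$ is a continuous extension of $f$, but in general it will not send $P\setminus Q$ into $\CMInfc(M,\R^n)$; the role of Theorem~\ref{th:main} is to repair this without disturbing the values on $Q$.

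Concretely, I would define $u\colon M\times P\to\R^n$ by $u(x,p):=(f\circ\rho)(p)(x)$. This map is continuous, each $u_p:=u(\cdot,p)$ is a nonflat conformal minimal immersion, and $u_p=f(p)$ for every $p\in Q$. Now apply Theorem~\ref{th:main} to this $u$, taking $Q$ to be the given closed subset of $P$, $K$ any fixed compact subset of $M$, and $\epsilon>0$ arbitrary (neither $K$ nor $\epsilon$ enters the conclusion we need). This yields a homotopy $u^t\colon M\times P\to\R^n$, $t\in[0,1]$, with properties (i)--(iv). Set $v(p):=u^1_p=u^1(\cdot,p)$. By (i), $v(p)\in\CMInf(M,\R^n)$ for all $p$, and $v\colon P\to\CMInf(M,\R^n)$ is continuous because $u^1$ is continuous on $M\times P$ and $\CMInf(M,\R^n)$ carries the compact-open topology. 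By (ii) applied on $Q\times[0,1]$, we have $v(p)=u_p=f(p)$ for $p\in Q$, so $v$ extends $f$. Finally, since $(P\setminus Q)\times\{1\}\subset(P\setminus Q)\times(0,1]$, property (iv) gives that $v(p)$ is complete for every $p\in P\setminus Q$, that is, $v(P\setminus Q)\subset\CMInfc(M,\R^n)$. This is exactly the assertion of the corollary.

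There is no substantial obstacle here: all the analytic and h-principle content is already packaged into Theorem~\ref{th:main}, and what remains is soft. The only points to check are that freezing the time parameter at $t=1$ in the jointly continuous homotopy $u^t$ produces a continuous map $P\to\CMInf(M,\R^n)$, and that, in the partition of $P\times[0,1]$ noted immediately after the theorem, the slice $P\setminus Q$ at time $1$ lies on the ``complete'' side while $Q$ stays on the ``fixed'' side. Both are immediate. (If one also wants a prescribed flux for the extension, one feeds Theorem~\ref{th:main} the homotopy $F^t(p):=\Flux(f(\rho(p)))$, constant in $t$, but the statement as given does not require this.)
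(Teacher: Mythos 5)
Your argument is correct and is precisely the intended ``direct consequence'' of Theorem~\ref{th:main}: extend by composing with a retraction, then repair with the homotopy from the theorem and evaluate at $t=1$, reading off conditions (i), (ii), and (iv). This matches the paper's own (unwritten) route, which is the same retraction-and-deform device used explicitly in the proof of Theorem~\ref{th:Serre}.
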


We now proceed to discuss the implications of condition {\rm (v)} in Theorem \ref{th:main}.  It may be seen from the results in \cite{AlarconLarusson2017IJM} that the flux map $\CMInf(M, \R^n)\to H^1(M, \R^n)$ sending an immersion $u$ to the cohomology class of $d^cu$ is a Serre fibration, that is, satisfies the homotopy lifting property with respect to all CW complexes. Next we use Theorem \ref{th:main} to prove that this also holds for the subspace of complete immersions.  If we ignore completeness, the same simple argument gives a new proof that the flux map on $\CMInf(M, \R^n)$ is a Serre fibration.

\begin{theorem}   \label{th:Serre}
If $M$ is an open Riemann surface and $n\ge 3$, then the flux map $\Flux\colon \CMInfc(M,\R^n)\to H^1(M,\R^n)$ is a Serre fibration.
\end{theorem}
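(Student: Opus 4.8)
The plan is to verify the homotopy lifting property directly, deducing it from Theorem \ref{th:main}. Fix a CW complex $P$ and a commutative square consisting of a map $f\colon P\to\CMInfc(M,\R^n)$ and a homotopy $G^t\colon P\to H^1(M,\R^n)$, $t\in[0,1]$, with $G^0=\Flux\circ f$; we must lift $G^t$ to a homotopy of complete immersions starting at $f$. Since the homotopy lifting property for Serre fibrations need only be checked for $P$ a finite-dimensional cube (indeed, for all CW complexes it suffices to check it for cubes $[0,1]^m$, by a standard argument), we may assume $P$ is a compact metric space, so that Theorem \ref{th:main} applies with that $P$. Write $u_p=f(p)$ and regard $u\colon M\times P\to\R^n$ as the continuous map $u(x,p)=u_p(x)$; each $u_p$ is a nonflat conformal minimal immersion, complete by hypothesis.

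The first step is to produce, from $G^t$, a genuine homotopy of immersions realising these flux values but \emph{without} yet worrying about preserving completeness: this is exactly the (easier, non-complete) version of the lifting that the authors note follows from \cite{AlarconLarusson2017IJM}, or alternatively can be extracted by applying the ``furthermore'' clause of Theorem \ref{th:main} with $K=\emptyset$, $Q=\emptyset$, and $F^t=G^t$. This yields a homotopy $v_p^t\colon M\to\R^n$ of nonflat conformal minimal immersions with $v_p^0=u_p$ and $\Flux(v_p^t)=G^t(p)$ for all $(p,t)\in P\times[0,1]$. The catch is that the $v_p^t$ for $t>0$ need not be complete. The second step is to repair this: apply Theorem \ref{th:main} again, now to the family $\{v_p^t\}_{(p,t)\in P\times[0,1]}$ reindexed over the \emph{compact} parameter space $P\times[0,1]$, with $Q=P\times\{0\}$ (on which the $v_p^0=u_p$ are already complete), with $K=\emptyset$, and with the constant-in-the-new-deformation-parameter flux homotopy $F^s\equiv G^t$ (constant in $s$, already matching on $Q$). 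Theorem \ref{th:main} then provides a further homotopy $w^s$, $s\in[0,1]$, with $w^0=v$, leaving the immersions over $Q=P\times\{0\}$ untouched, making every immersion with $(p,t)\notin Q$ (i.e. $t>0$) complete, and preserving the flux, i.e. $\Flux(w_{(p,t)}^s)=G^t(p)$. Setting $\tilde v_p^t:=w_{(p,t)}^1$ gives a homotopy with $\tilde v_p^0=u_p$, all $\tilde v_p^t$ nonflat conformal minimal immersions, $\tilde v_p^t$ complete for every $t\in(0,1]$ (and for $t=0$ it equals $u_p$, which is complete), and $\Flux(\tilde v_p^t)=G^t(p)$. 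This $(p,t)\mapsto\tilde v_p^t$ is the desired lift into $\CMInfc(M,\R^n)$.

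Finally, one must check the maps produced are continuous as maps $P\times[0,1]\to\CMInfc(M,\R^n)$ in the compact-open topology, and — for the relative form of the homotopy lifting property, needed when $G^t$ is already lifted over a subcomplex — that the construction can be carried out rel a closed subspace; but this is precisely what the set $Q$ in Theorem \ref{th:main} is designed to handle, so one simply feeds in $Q=$ (the subcomplex)$\,\times[0,1]\;\cup\;P\times\{0\}$ at each stage and invokes the normality/closedness hypotheses, which hold since $P\times[0,1]$ is compact metric. The main obstacle is bookkeeping: one must be careful that the two successive applications of Theorem \ref{th:main} are set up with the right $Q$ so that flux is preserved \emph{and} the already-complete immersions over the ``old'' part of the parameter space are not disturbed, and that the compact-open continuity of the output follows from the joint continuity of the homotopies $u^t$ in Theorem \ref{th:main} in all variables $(x,p,t)$ together. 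If one only wants the non-complete statement (the flux map on $\CMInf(M,\R^n)$ is a Serre fibration), the second step is unnecessary and the first step alone, via the ``furthermore'' clause, suffices.
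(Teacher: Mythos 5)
Your proof works, but your second application of Theorem~\ref{th:main} is superfluous; the ``catch'' you identify at the end of step~1 does not exist. When you apply Theorem~\ref{th:main} with $Q=\varnothing$ and $K=\varnothing$, conclusion~(iv) of the theorem already asserts that $u_p^t$ is complete for all $(p,t)\in(P\setminus Q)\times(0,1]=P\times(0,1]$, and $v_p^0=u_p$ is complete because $f$ takes values in $\CMInfc(M,\R^n)$. Thus the homotopy $v^t$ produced in step~1 already lands entirely in $\CMInfc(M,\R^n)$, and $(p,t)\mapsto v_p^t$ is the desired lift on the nose. (This is also why invoking \cite{AlarconLarusson2017IJM} for step~1 would be a weaker choice: that result does not give completeness, whereas Theorem~\ref{th:main} supplies it for free.)

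Modulo that redundancy, your argument is essentially the paper's: a single application of Theorem~\ref{th:main} with $K=\varnothing$ and the ``furthermore'' flux clause, differing only in how the data are fed into the theorem. You take the base cube $[0,1]^k$ as the parameter space, $Q=\varnothing$, and let the theorem's internal deformation parameter play the role of the homotopy parameter in $G^t$. The paper instead takes the full cube $[0,1]^{k+1}$ as the parameter space and $Q=[0,1]^k\times\{0\}$ as the closed subspace, extends the given map from $Q$ to the whole cube by precomposing with a retraction $\rho$, and introduces a fresh deformation parameter to carry the flux from $\Flux(u\circ\rho)$ to the prescribed class $f$ along a straight-line homotopy in $H^1(M,\R^n)$, which is constant on $Q$. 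Both formulations are equally valid; the paper's has the small advantage of handling in one stroke the lifting problem along the inclusion of any subcomplex that is a retract, not just cube pairs.
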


\begin{proof}
Let $j:Q\hookrightarrow P$ be the inclusion in a CW complex of a subcomplex, such that $j$ is a homotopy equivalence, or simply let $j$ be the inclusion of $[0,1]^k\times\{0\}$ in $[0,1]^{k+1}$ for some $k\geq 0$, and consider a commuting square of continuous maps as follows.
\[ 
	\xymatrix{
	Q \ar@{^{(}->}[d] \ar[r]^{\!\!\!\!\!\!\!\!\!\!\!\!\!\!\!\!\!\!\!\! u} & \CMInfc(M,\R^n) \ar[d]^\Flux
		\\ 
	P \ar[r]^{\!\!\!\!\!\!\!\!\!\!\!\!\!\! f} & H^1(M,\R^n)
	} 
\]
Let $\rho:P\to Q$ be a retraction.  The map $u\circ\rho$ extends $u$.  The maps $\Flux\circ u\circ\rho$ and $f$ agree on $Q$ and are therefore homotopic relative to $Q$.  By Theorem \ref{th:main} with $K=\varnothing$, $u\circ\rho$ can be deformed, relative to $Q$, to a map $P\to\CMInfc(M,\R^n)$ with flux $f$.  Such a map is the desired lifting in the square.
\end{proof}

Let $F\in H^1(M,\R^n)$.  Theorem \ref{th:Serre} implies that the weak homotopy type of the space of complete nonflat conformal minimal immersions $M\to\R^n$ with flux $F$ is the same for all $F$.  Without completeness, this was proved in \cite{AlarconLarusson2017IJM}.  In what follows, we will focus on immersions with $F=0$, although our results hold for arbitrary $F$.

Recall that a harmonic map $u:M\to\R^n$ has a harmonic conjugate if and only if the cohomology class of $d^cu$ vanishes.  If $u\in\CMI(M,\R^n)$ has a harmonic conjugate $v$, then the holomorphic immersion $\Phi=u+iv:M\to\C^n$ is a null curve, meaning that the holomorphic map $\partial \Phi/\theta=2\partial u/\theta$ maps $M$ into $\boldA_*$.  The space of holomorphic null curves $M\to \C^n$ is denoted $\NC(M, \C^n)$.  The space of real parts of such curves is denoted $\RNC(M, \C^n)$ and $\Re:\NC(M, \C^n)\to \RNC(M, \C^n) \subset \CMI(M,\R^n)$ is the real part map.  As above, we use the subscript $_{\rm nf}$ and the superscript $^{\rm c}$ to denote the corresponding subspaces of nonflat and complete immersions, respectively. It is well known and not hard to see that a holomorphic null curve is complete if and only if its real part is.  The same holds for nonflatness and fullness (defined below).

Theorem \ref{th:main} allows us to strengthen Corollary \ref{cor:weak-eq-for-minimal}.

\begin{corollary}  \label{cor:weak-eq-square}
Let $M$ be an open Riemann surface and $n\geq 3$.  

{\rm (a)}  The inclusions in the square
\[ \xymatrix{
	\RNCnfc(M,\C^n)  \ar@{^{(}->}[r] \ar@{^{(}->}[d]  &  \CMInfc(M,\R^n) \ar@{^{(}->}[d] \\ 
	\RNCnf(M,\C^n)   \ar@{^{(}->}[r]     &  \CMInf(M,\R^n)  } \]
are weak homotopy equivalences. 

{\rm (b)}  If $M$ is of finite topological type, then the inclusions are homotopy equivalences.
\end{corollary}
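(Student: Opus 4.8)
The plan is to deduce Corollary \ref{cor:weak-eq-square} from Corollary \ref{cor:weak-eq-for-minimal} together with its null-curve analogue. The key observation is that the vertical maps in the square --- the real-part maps from null curves to conformal minimal immersions, restricted to flux-zero immersions --- should themselves be weak homotopy equivalences (indeed, after restricting to the subspace of flux-zero immersions, the real-part map is a fibre bundle whose fibre is the contractible space of global harmonic conjugates, i.e.\ $\R^n$ modulo a lattice, or more precisely is trivialised by a choice of base point). First I would make precise that $\RNCnf(M,\C^n)$ is exactly the flux-zero part of $\CMInf(M,\R^n)$, and that $\Re$ restricts to a weak homotopy equivalence $\NCnf(M,\C^n)\to\RNCnf(M,\C^n)$ and similarly $\NCnfc(M,\C^n)\to\RNCnfc(M,\C^n)$; this is where one uses that a null curve is complete (resp.\ nonflat) if and only if its real part is, as noted in the excerpt. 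So the square of inclusions is equivalent, up to these weak equivalences on the left column, to the square relating $\CMInfc$ and $\CMInf$ in the flux-zero fibre, and by Theorem \ref{th:Serre} the flux-zero fibre has the same weak homotopy type as the whole space.

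More concretely, for part (a) it suffices by the two-out-of-three property of weak homotopy equivalences to show that three of the four maps in the square are weak equivalences. The bottom inclusion $\RNCnf(M,\C^n)\hookrightarrow\CMInf(M,\R^n)$ is the inclusion of the flux-zero fibre of a Serre fibration, and by the results of \cite{AlarconLarusson2017IJM} this inclusion is a weak homotopy equivalence onto its path component of the fibre --- but here one must be slightly careful: the fibre over $0$ need not be the whole space, so instead I would argue directly. The correct statement is that the inclusion of any fibre of a Serre fibration over a path-connected base with weakly contractible... no; rather, I would use that $H^1(M,\R^n)$ is a (topological) vector space, hence contractible, so by the long exact sequence of the fibration $\Flux\colon\CMInf(M,\R^n)\to H^1(M,\R^n)$ the inclusion of the fibre $\Flux^{-1}(0)=\RNCnf(M,\C^n)$ is a weak homotopy equivalence; likewise $\Flux^{-1}(0)\cap\CMInfc(M,\R^n)=\RNCnfc(M,\C^n)\hookrightarrow\CMInfc(M,\R^n)$ is a weak homotopy equivalence by Theorem \ref{th:Serre} and the same argument. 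Combined with Corollary \ref{cor:weak-eq-for-minimal}(a) for the right column, the commuting square forces the left column inclusion $\RNCnfc(M,\C^n)\hookrightarrow\RNCnf(M,\C^n)$ to be a weak homotopy equivalence as well.

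For part (b), when $M$ has finite topological type I would invoke the same machinery as in Corollary \ref{cor:weak-eq-for-minimal}(b): by the method of \cite{Larusson2015PAMS}, using the relevant h-principle, all four spaces in the square are absolute neighbourhood retracts and hence have the homotopy type of CW complexes, so the Whitehead lemma upgrades each of the weak homotopy equivalences from part (a) to a genuine homotopy equivalence. The main obstacle I anticipate is not any single deep step but rather assembling the pieces cleanly: one must verify that $\Flux$ restricts to a Serre fibration on the relevant subspaces (which is exactly the content of Theorem \ref{th:Serre}), that the fibre identifications $\Flux^{-1}(0)=\RNCnf$ and $\Flux^{-1}(0)\cap\CMInfc=\RNCnfc$ are literally correct at the level of topological spaces (this follows from the standard fact that a flux-zero harmonic map has a global harmonic conjugate, together with the equivalence of completeness/nonflatness for a null curve and its real part), and that the square genuinely commutes so that the two-out-of-three argument applies. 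Showing that the spaces of null curves are ANRs when $M$ is of finite type --- i.e.\ that the h-principle input needed for the Lárusson argument is available for $\NCnf$ and $\NCnfc$ --- is the one place where a little extra care beyond a formal diagram chase is required.
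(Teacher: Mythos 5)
Your argument for part (a) is correct in substance but takes a genuinely different route from the paper. The paper proves each of the four inclusions to be a weak homotopy equivalence directly from Theorem~\ref{th:main}: for each arrow one chooses the parameter pair $(P,Q)$ to be a singleton/empty or a ball/boundary-sphere, and selects the flux homotopy $F^t$ appropriately (all fluxes zero for the left inclusion, $F^t(p)=(1-t)\Flux(u_p)$ for the top, etc.). You instead observe that $\RNCnf(M,\C^n)=\Flux^{-1}(0)\subset\CMInf(M,\R^n)$ and $\RNCnfc(M,\C^n)=\Flux^{-1}(0)\subset\CMInfc(M,\R^n)$, that both flux maps are Serre fibrations (the first from \cite{AlarconLarusson2017IJM}, or by the same argument as Theorem~\ref{th:Serre}; the second is Theorem~\ref{th:Serre}), and that $H^1(M,\R^n)$ is a contractible topological vector space, so the long exact sequence of each fibration makes the fibre inclusion a weak equivalence. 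Combined with Corollary~\ref{cor:weak-eq-for-minimal}(a) and the commutativity of the square, the two-out-of-three property then gives the left inclusion. This is a more structural and perhaps more illuminating derivation: it exhibits the horizontal arrows as fibre inclusions over a contractible base rather than re-running Theorem~\ref{th:main} four times. It does not save work in any deep sense, since Theorem~\ref{th:Serre} is itself deduced from Theorem~\ref{th:main}, but it isolates the fibration as the organising principle. Two small points you should tidy: (i) the first paragraph misdescribes the vertical arrows as ``real-part maps'' (they are inclusions of complete into general nonflat immersions; the real-part map $\Re\colon\NCnf\to\RNCnf$ is not in the square and its fibre is an affine copy of $\R^n$, not $\R^n$ modulo a lattice); (ii) you should spell out that $H^1(M,\R^n)$ is contractible even when $M$ has infinite topological type, via the linear homotopy, and that the Serre-fibration long exact sequence together with the homotopy lifting property for the square $[0,1]^2$ handles $\pi_0$ as well as the higher $\pi_k$. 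For part (b) your approach coincides with the paper's (ANR via Proposition~\ref{pr:subspace-is-anr} and the Whitehead lemma); note only that the ANR claims needed are for $\RNCnfc$ and $\CMInfc$ (Theorem~\ref{th:anr}), not for $\NCnf$ and $\NCnfc$ as you wrote in the final sentence.
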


Part (b) is proved in Section \ref{sec:finite}.  Part (a) is nearly immediate; let us say a few words about the proof.  To prove that the left inclusion is a weak homotopy equivalence, we consider a $P$-family in $\RNCnf(M,\C^n)$ mapping $Q$ into $\RNCnfc(M,\C^n)$ and let all the fluxes in the homotopy vanish: $F^t(p)=0$ for all $p$, $t$.  We do this first for $P$ a singleton and $Q$ empty; then we take $P$ to be the closed unit ball in $\R^k$, $k\geq 1$, and $Q$ to be its boundary sphere.  The right inclusion is handled similarly, ignoring the flux.  For the top inclusion, we take a $P$-family in $\CMInfc(M,\R^n)$ mapping $Q$ into $\RNCnfc(M,\C^n)$ and let the flux homotopy deform the initial flux to zero (we take $K=\varnothing$ and choose, for instance, $F^t(p)=(1-t)\Flux(u_p)$ for all $p$, $t$).  The bottom inclusion is handled in the same way, ignoring completeness.

Theorem \ref{th:main} implies the following analogue of Corollary \ref{co:extension}.

\begin{corollary}
If $M$ is an open Riemann surface and $Q\subset P$ are compact metric spaces such that $Q$ is a retract of $P$, then every continuous map $Q\to\RNCnf(M,\R^n)$, $n\ge 3$, extends to a continuous map $P\to\RNCnf(M,\R^n)$ that takes $P\setminus Q$ into $\RNCnfc(M,\R^n)$. 
\end{corollary}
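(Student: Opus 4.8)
The plan is to follow the template of the proof of Corollary~\ref{co:extension}, but to invoke the flux-controlled conclusion~(v) of Theorem~\ref{th:main} so as to stay inside the class of real parts of holomorphic null curves. I would first record the elementary observation, already noted above, that a nonflat conformal minimal immersion $u\colon M\to\R^n$ is the real part of a holomorphic null curve $M\to\C^n$ precisely when $\Flux(u)=0$, since $u$ admits a harmonic conjugate exactly when the cohomology class of $d^cu$ vanishes. Consequently $\RNCnf(M,\C^n)$ is, as a topological space, the zero-flux slice $\{u\in\CMInf(M,\R^n):\Flux(u)=0\}$ with the subspace topology, and $\RNCnfc(M,\C^n)$ is the subset of complete immersions therein.

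Next, given a retraction $\rho\colon P\to Q$ and the prescribed continuous map $v\colon Q\to\RNCnf(M,\C^n)$, I would form the continuous extension $u:=v\circ(\mathrm{id}_M\times\rho)\colon M\times P\to\R^n$, whose slices $u_p$ are nonflat conformal minimal immersions of vanishing flux. I would then apply Theorem~\ref{th:main} to this $P$-family with $K=\varnothing$, with the given closed set $Q\subset P$, and with the constant flux homotopy $F^t(p):=0$; the hypotheses on $F^t$ hold trivially because $\Flux(u_p)=0$ for all $p$, and every condition involving $K$ is vacuous. The theorem then produces a homotopy $u^t$ with $u^0=u$, fixing $v$ on $Q$ by~(ii), keeping each slice a nonflat conformal minimal immersion by~(i), keeping every flux equal to $0$ by~(v), and making $u^1_p$ complete for every $p\in P\setminus Q$ by~(iv). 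Setting $w:=u^1$, the zero-flux characterisation shows that $w$ maps $P$ into $\RNCnf(M,\C^n)$, that it restricts to $v$ on $Q$, and that by~(iv) it sends $P\setminus Q$ into $\RNCnfc(M,\C^n)$, which is what is required.

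I do not expect any genuine obstacle: the substantive work is entirely absorbed into Theorem~\ref{th:main}, and the only points requiring slight care are the identification of $\RNCnf(M,\C^n)$ with the zero-flux subspace of $\CMInf(M,\R^n)$---which is what legitimises importing the conclusions of Theorem~\ref{th:main} into the null-curve setting---and the verification that the constant choice $F^t\equiv 0$ satisfies the hypotheses Theorem~\ref{th:main} imposes on the flux data.
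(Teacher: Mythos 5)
Your proposal is correct and follows essentially the same route the paper intends: just as Corollary \ref{co:extension} is obtained by pushing $u\circ\rho$ through Theorem \ref{th:main} with $K=\varnothing$, the null-curve analogue is obtained by the same construction together with the constant flux homotopy $F^t\equiv 0$, which keeps the whole deformation inside the zero-flux slice $\RNCnf(M,\C^n)\subset\CMInf(M,\R^n)$. The identification of $\RNCnf(M,\C^n)$ with this slice is exactly the mechanism the paper uses elsewhere (e.g., in the discussion around Corollary \ref{cor:weak-eq-square}), so no further argument is needed.
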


As noted in \cite{ForstnericLarusson2019CAG}, by continuity in the compact-open topology of the Hilbert transform that takes $u\in \RNCnf(M, \C^n)$ to its harmonic conjugate $v$ with $v(x) = 0$, where $x \in M$ is any chosen base point, the real part map $\Re:\NCnf(M, \C^n)\to \RNCnf(M, \C^n)$ is a homotopy equivalence.  Similarly, $\Re:\NCnfc(M, \C^n)\to \RNCnfc(M, \C^n)$ is a homotopy equivalence.  Corollary \ref{cor:weak-eq-square} therefore implies that the inclusion $\NCnfc(M, \C^n)\hookrightarrow \NCnf(M, \C^n)$ is a weak homotopy equivalence and, if $M$ is of finite topological type, a genuine homotopy equivalence.

It was known previously that the inclusion $\RNCnf(M,\C^n) \hookrightarrow \CMInf(M,\R^n)$ is a weak homotopy equivalence.  It follows from a parametric h-principle for minimal surfaces and holomorphic null curves that was proved in \cite{ForstnericLarusson2019CAG} and used to determine the homotopy type of the spaces of nonflat minimal surfaces in $\R^n$ and nonflat null curves in $\C^n$, $n\geq 3$.  More precisely, it was shown in \cite{ForstnericLarusson2019CAG} that the maps in the diagram
\[ \xymatrix{
\RNCnf(M,\C^n)   \ar@{^{(}->}[r]   &  \CMInf(M,\R^n)  \ar[d]_\psi &  & \\
\NCnf(M,\C^n)  \ar[r]^\phi  \ar[u]^\Re  &  \Onf(M,\boldA_*)  \ar@{^{(}->}[r]  & \Oscr(M,\boldA_*) \ar@{^{(}->}[r] & \Cscr(M,\boldA_*)\\ 
} \]
are weak homotopy equivalences.  Here, $\phi(\Phi)=\partial \Phi/\theta$, $\psi(u)=2\partial u/\theta$, and $\Cscr(M,\boldA_*)$ is the space of continuous maps $M\to\boldA_*$.  When $M$ is of finite topological type, all the maps in the diagram are genuine homotopy equivalences.

Using the above results, we are able to describe the homotopy type of the space of complete nonflat conformal minimal immersions as follows.  The homotopy type of $\Cscr(M,\boldA_*)$ can be understood in terms of basic algebraic topology.

\begin{corollary}   \label{cor:homotopy-type-of-complete}
Let $M$ be an open Riemann surface and $n\geq 3$.  The map
\[ \CMInfc(M,\R^n) \to  \Cscr(M,\boldA_*), \qquad u \mapsto \partial u/\theta, \]
is a weak homotopy equivalence.  When $M$ is of finite topological type, the map is a homotopy equivalence.
\end{corollary}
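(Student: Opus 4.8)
The plan is to assemble the map $u\mapsto \partial u/\theta$ as a composition of maps each already known (from earlier in the paper or from \cite{ForstnericLarusson2019CAG}) to be a weak homotopy equivalence, and then invoke the two-out-of-three property for weak homotopy equivalences. First I would recall from the last displayed diagram in the introduction, due to \cite{ForstnericLarusson2019CAG}, that the inclusions $\Onf(M,\boldA_*)\hookrightarrow\Oscr(M,\boldA_*)\hookrightarrow\Cscr(M,\boldA_*)$ and the map $\phi\colon\NCnf(M,\C^n)\to\Onf(M,\boldA_*)$, $\Phi\mapsto\partial\Phi/\theta$, and $\psi\colon\CMInf(M,\R^n)\to\Onf(M,\boldA_*)$, $u\mapsto 2\partial u/\theta$, are all weak homotopy equivalences; hence so is the composite $\CMInf(M,\R^n)\to\Cscr(M,\boldA_*)$ that sends $u$ to $\partial u/\theta$ (the factor of $2$ being absorbed by a homeomorphism of $\boldA_*$, or simply ignored since $\partial u/\theta$ and $2\partial u/\theta$ are homotopic as maps into $\boldA_*$ via scaling in the fibre). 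Next I would use Corollary \ref{cor:weak-eq-for-minimal}(a): the inclusion $\CMInfc(M,\R^n)\hookrightarrow\CMInf(M,\R^n)$ is a weak homotopy equivalence. Composing, the map $\CMInfc(M,\R^n)\to\Cscr(M,\boldA_*)$, $u\mapsto\partial u/\theta$, is a weak homotopy equivalence, since it is the restriction to $\CMInfc(M,\R^n)$ of the composite just described, and both that composite and the inclusion are weak homotopy equivalences.

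For the finite-topological-type case, the argument is the same but using genuine homotopy equivalences throughout: by Corollary \ref{cor:weak-eq-for-minimal}(b) the inclusion $\CMInfc(M,\R^n)\hookrightarrow\CMInf(M,\R^n)$ is then a homotopy equivalence, and by \cite{ForstnericLarusson2019CAG} all the maps in the diagram, in particular $\psi$ and the two inclusions of mapping spaces into $\Cscr(M,\boldA_*)$, are homotopy equivalences when $M$ has finite topological type. The composition of homotopy equivalences is a homotopy equivalence, so $u\mapsto\partial u/\theta$ is a homotopy equivalence $\CMInfc(M,\R^n)\to\Cscr(M,\boldA_*)$.

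There is essentially no obstacle here: the corollary is a bookkeeping consequence of Corollary \ref{cor:weak-eq-for-minimal} together with the imported results of \cite{ForstnericLarusson2019CAG}. The one small point requiring care is the identification of the map in the statement, $u\mapsto\partial u/\theta$, with the map $\psi$ (or rather $\iota\circ\psi$, where $\iota$ denotes the inclusions into $\Cscr(M,\boldA_*)$) up to homotopy; this is handled by noting that $z\mapsto 2z$ restricts to a homeomorphism of $\boldA_*$ isotopic to the identity, so post-composition with it does not change homotopy type. All the real content — the h-principle allowing completeness to be achieved parametrically, and the absolute-neighbourhood-retract argument behind part (b) of Corollary \ref{cor:weak-eq-for-minimal} — has already been done by this stage of the paper.
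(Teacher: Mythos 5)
Your proposal is correct and takes essentially the same route as the paper: the corollary is stated as a direct consequence of Corollary \ref{cor:weak-eq-for-minimal}(a)/(b) (equivalently the right column of Corollary \ref{cor:weak-eq-square}) composed with the chain of (weak) homotopy equivalences from the diagram of \cite{ForstnericLarusson2019CAG}, exactly as you assemble it. Your care about the factor of $2$ separating $\partial u/\theta$ from $\psi(u)=2\partial u/\theta$ is a sound observation, handled correctly by noting that post-composition with the homeomorphism $z\mapsto z/2$ of $\boldA_*$ (isotopic to the identity) preserves weak homotopy equivalences.
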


A conformal minimal immersion $u:M\to \R^n$ is called full if $\psi(u): M\to\boldA_*$ is full in the sense that the $\C$-linear span of $\psi(u)(M)$ is all of $\C^n$.  Similarly, a holomorphic null curve $\Phi:M\to\C^n$ is full if $\phi(\Phi): M\to\boldA_*$ is full.   Fullness and nonflatness are equivalent for $n=3$, but fullness is stronger in higher dimensions.  As we explain in Section \ref{sec:full}, our results are easily adapted to full immersions in place of nonflat immersions.

In conclusion, all the spaces of maps from the open Riemann surface $M$ that we have considered have the same weak homotopy type and, when $M$ is of finite topological type, the same homotopy type.

Further applications of Theorem \ref{th:main} are contained in our subsequent paper \cite{AlarconLarusson2022}, where we use the theorem to, among other results, determine the homotopy type of the space of meromorphic functions on an open Riemann surface $M$ that are the Gauss map of a complete conformal minimal immersion $M\to\R^3$.


\section{A parametric completeness lemma}
\label{sec:lemma}

\noindent
In this section we provide the main step to ensure the completeness condition {\rm (iv)} in Theorem \ref{th:main}. This will be accomplished by a recursive application of the following lemma to the effect of enlarging the boundary distance from a fixed interior point of some of the immersions in a homotopy of nonflat conformal minimal immersions.  Here we only ask that the parameter space $P$ be Hausdorff and compact.  By a compact domain in a topological space we mean a nonempty compact subset which is the closure of a connected open subset.  By a conformal minimal immersion or a holomorphic map on a compact set we mean the restriction of such a map on an unspecified open neighbourhood of the set.
%
%
\begin{lemma}\label{lem:distance}
Let $M$ be an open Riemann surface, $L\subset M$ be a smoothly bounded compact domain, $P$ be a compact Hausdorff space, and $u^t\colon L\times P\to\R^n$ $(t\in [0,1])$, $n\ge 3$, be a homotopy of nonflat conformal minimal immersions $u_p^t:=u^t(\cdot,p)\colon L\to\R^n$ $((p,t)\in P\times[0,1])$.
Also let $Q$ and $T$ be a pair of disjoint closed subspaces of $P$, $K\subset \mathring L$ be a compact subset, and $x_0\in \mathring L$.

Then, for any numbers $\epsilon>0$, $\Lambda>0$, and $0<r<1$, there is a homotopy $\tilde u^t\colon L\times P\to\R^n$ $(t\in[0,1])$ of nonflat conformal minimal immersions $\tilde u_p^t:=\tilde u^t(\cdot,p)\colon L\to\R^n$ $((p,t)\in P\times[0,1])$
satisfying the following conditions.
\begin{enumerate}[\rm (a)]
\item  $\tilde u_p^t=u_p^t$ for all $(p,t)\in (P\times\{0\})\cup (Q\times[0,1])$.

\smallskip
\item $|\tilde u_p^t(x)-u_p^t(x)|<\epsilon$ for all $x\in K$ and $(p,t)\in P\times[0,1]$. 

\smallskip
\item  $\Flux(\tilde u_p^t)=\Flux (u_p^t)$ for all $(p,t)\in P\times [0,1]$. 

\smallskip
\item  $\dist_{\tilde u_p^t}(x_0,bL)>\Lambda$ for all $(p,t)\in T\times[r,1]$.
\end{enumerate}
\end{lemma}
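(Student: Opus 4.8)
\textbf{Proof plan for Lemma \ref{lem:distance}.}

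The plan is to realise the required enlargement of the boundary distance by a finite recursive procedure, where at each step we apply a parametric L\'opez--Ros-type deformation (Lemma \ref{lem:pair}) supported near a single boundary arc of $L$. First I would fix a smooth exhaustion-type structure on the annular region near $bL$: choose finitely many short, pairwise disjoint smooth arcs $\gamma_1,\dots,\gamma_N$ meeting $bL$ such that every arc in $L$ from $x_0$ to $bL$ must cross at least one of them, or more precisely such that any path from a fixed neighbourhood of $K\cup\{x_0\}$ to $bL$ accumulates length along at least one $\gamma_j$ if each $\gamma_j$ is made long in a controlled, uniform way in the conformal metric induced by $u_p^t$. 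The standard device (going back to Jorge--Xavier and used throughout this circle of ideas) is to pick a labyrinth of compact sets or, more simply here, a finite collection of ``bridges'' $\gamma_j$ and to arrange, recursively over $j=1,\dots,N$, that the image of $\gamma_j$ under the current immersion has Euclidean diameter $> 3\Lambda$ (say), which forces $\dist_{\tilde u_p^t}(x_0,bL)>\Lambda$ once all bridges have been treated.

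The heart of the argument is the inductive step: assuming the homotopy has already been modified so that the images of $\gamma_1,\dots,\gamma_{j-1}$ are long, we apply Lemma \ref{lem:pair} (the parametric L\'opez--Ros deformation) with the deformation localised in a small neighbourhood $\Omega_j$ of $\gamma_j$ disjoint from $K$, from $x_0$, and from the previously used neighbourhoods $\Omega_1,\dots,\Omega_{j-1}$; this disjointness guarantees that the new modification does not spoil the lengths already gained and does not disturb the immersions on $K$ beyond the allotted $\epsilon/N$ per step. The L\'opez--Ros deformation increases the diameter of the image of $\gamma_j$ by multiplying one pair of coordinate differentials by a large holomorphic factor and compensating with its reciprocal, while leaving the conformal and minimal character intact; the flux is preserved because the periods are controlled by the period-dominating spray furnished by nonflatness (this is exactly the content built into Lemma \ref{lem:pair}, so I would invoke it as a black box). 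Crucially, the deformation must be performed only for parameters $(p,t)$ outside a neighbourhood of $(P\times\{0\})\cup(Q\times[0,1])$: I would fix a continuous cutoff function $\chi\colon P\times[0,1]\to[0,1]$ that equals $0$ on a neighbourhood of $(P\times\{0\})\cup(Q\times[0,1])$ and equals $1$ on $T\times[r,1]$ (such a $\chi$ exists since $P$ is compact Hausdorff, hence normal, and $(P\times\{0\})\cup(Q\times[0,1])$ and $T\times[r,1]$ are disjoint closed sets), and use $\chi(p,t)$ to modulate the size of the L\'opez--Ros parameter in Lemma \ref{lem:pair}, so that conditions (a) and (b) hold automatically and (d) holds because $\chi\equiv 1$ there.

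Assembling the steps: after $N$ applications of Lemma \ref{lem:pair} I obtain $\tilde u^t$ with (a) and (c) exact, (b) with total error $<\epsilon$ by splitting the budget as $\epsilon/N$ per stage, and (d) because for $(p,t)\in T\times[r,1]$ every path from $x_0$ to $bL$ crosses some $\gamma_j$ whose image now has diameter exceeding $\Lambda$. Continuity of the homotopy in $(p,t)$ follows from the continuity statement in Lemma \ref{lem:pair} together with the continuity of $\chi$. The main obstacle I anticipate is purely of a bookkeeping nature rather than analytic: one must verify that the finitely many bridge neighbourhoods $\Omega_j$ can be chosen simultaneously disjoint while still collectively blocking every path from $x_0$ to $bL$, and that the recursive modifications genuinely do not interfere --- this requires choosing the $\Omega_j$ thin enough and ordering the construction so that at stage $j$ the metric $\dist_{\tilde u_p^t}$ restricted to $\gamma_1\cup\dots\cup\gamma_{j-1}$ is already essentially its final value. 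A secondary subtlety is ensuring nonflatness is preserved throughout the homotopy, not just at the endpoints; since Lemma \ref{lem:pair} is stated for homotopies of nonflat immersions and produces homotopies of nonflat immersions, this is inherited, but one should be careful that the localised nature of each deformation keeps every intermediate $\tilde u_p^t$ nonflat (which it does, as the immersions are unchanged on the large set $L\setminus\bigcup_j\Omega_j$ containing an open set where nonflatness can be read off).
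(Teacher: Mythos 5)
Your outline has the right ingredients in spirit (a finite recursion of parametric L\'opez--Ros deformations via Lemma~\ref{lem:pair}, each localised away from $K$, with flux controlled by period domination), but it misses the two points that actually drive the paper's argument, and one of them is a genuine gap rather than a cosmetic difference.

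First, and most importantly, for $n\geq 4$ the nonflatness of $u_p^t$ does \emph{not} guarantee that any fixed pair of coordinate differentials $(\phi_{p,a}^t,\phi_{p,b}^t)$ is linearly independent simultaneously for all $(p,t)\in T\times[r,1]$, even after a rigid motion of $\R^n$. Lemma~\ref{lem:pair} requires that $f_d$ and $g_d$ be linearly independent for every parameter $d$, so you cannot simply feed it one fixed coordinate pair over the whole parameter set. The paper's recursion is therefore organised over a \emph{finite cover} $\Upsilon_1,\dots,\Upsilon_\ell$ of $T\times[r,1]$ in parameter space, with a possibly different coordinate pair $(a(l),b(l))$ for each $l$, and correspondingly a nested chain $K=K_0\subset K_1\subset\cdots\subset K_\ell=L$ so that the step-$l$ labyrinth lives in $K_l\setminus\mathring K_{l-1}$ and previous gains are protected by the smallness of $h$ on $K_{l-1}$. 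Your recursion is over geometric ``bridges'' with a single application of Lemma~\ref{lem:pair} per bridge, and implicitly a single coordinate pair throughout; as written it does not apply for $n\geq 4$.

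Second, the mechanism ``make the image of $\gamma_j$ have large Euclidean diameter, hence every path crossing $\gamma_j$ is long'' is not correct. An arc can be crossed transversally in a point, contributing nothing to the length of the crossing path, regardless of how stretched the arc's image is. The actual Jorge--Xavier mechanism is a dichotomy: inside each annulus one places an $\mathscr O(M)$-convex labyrinth $\Omega_j$ such that any path $\gamma$ connecting the two boundary components of the annulus either (a) contains a subpath inside $\Omega_j$ of $|\theta|$-length at least $\lambda$, in which case the L\'opez--Ros factor $|h|\gg 1$ there makes $\length(u\circ\gamma)$ large by $\eqref{eq:tildephi22}$ and the lower bound $\eqref{eq:sigma}$ on $|f|$ on $\Omega_j$, or (b) avoids such a subpath, in which case condition $(\star)$ and the lower bound $\eqref{eq:varrho}$ on $|\Psi|$ already force $\length(u\circ\gamma)>\Lambda$ without any use of $h$. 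Your proposal collapses this dichotomy to a single case, and the case you keep is the one that does not actually imply the length estimate. The cutoff function $\chi$ you propose for the parameter modulation is fine in spirit (Lemma~\ref{lem:pair} implements it via Urysohn inside Claim~\ref{cl:ML-2}), but it is a side issue compared to the two points above.
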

The main point of the lemma is condition {\rm (d)}, which will be the key to guarantee condition {\rm (iv)} in Theorem \ref{th:main}. Except for {\rm (d)}, the initial homotopy $u^t$ itself satisfies the conclusion of the lemma. Here $\dist_{\tilde u_p^t}(\cdot,\cdot)$ denotes the distance function on $L$ induced by the Euclidean distance in $\R^n$ via the immersion $\tilde u_p^t$, that is,
\[
	\dist_{\tilde u_p^t}(x_0,bL)=\inf\{\length(\tilde u_p^t\circ\gamma)\colon
	\text{ $\gamma$ is an arc in $L$ connecting $x_0$ and $bL$}\}, 
\]
where $\length(\cdot)$ denotes the Euclidean length in $\R^n$.

The proof of Lemma \ref{lem:distance} relies on a sort of parametric version of the L\'opez-Ros deformation for minimal surfaces. This deformation, which was introduced in \cite{LopezRos1991JDG} for a different purpose, has proved to be a very powerful tool for the construction of complete minimal surfaces when it is combined with the method by Jorge and Xavier to show the existence of a complete nonflat minimal surface in $\R^3$ contained between two parallel planes \cite{JorgeXavier1980AM}. We refer to \cite[Section 7.1]{AlarconForstnericLopez2021} for background on this subject. The L\'opez-Ros deformation is a way to deform a given conformal minimal immersion on a smoothly bounded compact domain $L$ in an open Riemann surface $M$ while keeping one of its component functions fixed. This was extended to minimal surfaces in $\R^n$ for arbitrary $n\ge 3$ by the following simple trick, first used in \cite{AlarconFernandezLopez2013CVPDE} (see also \cite{AlarconLopez2021APDE,Castro-Infantes2021}). Assume that $u=(u_1,u_2,u_3,\ldots,u_n)\colon L\to\R^n$ is a conformal minimal immersion, let $\theta$ be a nowhere-vanishing holomorphic $1$-form on $M$, and write $2\partial u=(\psi_1,\psi_2,\psi_3,\ldots,\psi_n)\theta$, so $\psi_1^2+\psi_2^2=\Psi:=-\sum_{j=3}^n \psi_j^2$. Setting $f=\psi_1-\imath\psi_2$ and $g=\psi_1+\imath\psi_2$, we have $\psi_1=\frac12(f+g)$, $\psi_2=\frac{\imath}2(f-g)$, and $fg=\Psi$. Multiplying $f$ and dividing $g$ by the same nowhere-vanishing holomorphic function $h$ on $L$, we obtain a pair of holomorphic functions $\tilde \psi_1=\frac12(fh+g/h)$ and $\tilde \psi_2=\frac{\imath}2(fh-g/h)$ such that $\tilde\psi_1^2+\tilde\psi_2^2=fg=\Psi$. Thus, if the $1$-forms $(f-fh)\theta$ and $(g-g/h)\theta$ are exact on $L$, then the formula
\[
	\tilde u(x)=u(x_0)+\Re\int_{x_0}^x(\tilde\psi_1,\tilde\psi_2,\psi_3,\ldots,\psi_n)\theta,
	\quad x\in L,
\]
for any base point $x_0\in \mathring L$, defines a conformal minimal immersion $\tilde u=(\tilde u_1,\tilde u_2,\tilde u_3,\ldots,\tilde u_n)\colon L\to\R^n$ with $\tilde u_j=u_j$ for $j=3,\ldots,n$ and $\Flux(\tilde u)=\Flux(u)$. 

In order to increase the boundary distance of a given immersion $u\colon L\to\R^n$ while hardly modifying it on a given compact subset $K\subset\mathring L$ with $x_0\in \mathring K$, one applies a L\'opez-Ros deformation with a nowhere-vanishing holomorphic function $h$ on $L$ which is close to $1$ on $K$ and large in norm on a Jorge-Xavier-type labyrinth $\Omega$ in $\mathring L\setminus K$  adapted to the given immersion $u$; see e.g. \cite[Section 4]{AlarconFernandezLopez2013CVPDE}. The main difficulty in carrying out this procedure is therefore to find a suitable holomorphic function $h$ on $L$. The following lemma deals with this task in the parametric framework; in fact, it will enable us to enlarge the boundary distance of some of the immersions in a family (see condition {\rm (e)}) while keeping some others fixed (see {\rm (b)}).

%
%
\begin{lemma}\label{lem:pair}
Let $M$ be an open Riemann surface, $L\subset M$ be a smoothly bounded compact domain, $\mathcal D$ be a compact Hausdorff space, and $f,g\colon L\times \mathcal D\to\C$ be a pair of continuous functions such that $f_d:=f(\cdot,d)\colon L\to\C$ and $g_d:=g(\cdot,d)\colon L\to\C$ are holomorphic and complex linearly independent for all $d\in \mathcal D$.
Also let $\theta$ be a nowhere-vanishing holomorphic $1$-form on $M$, $K\subset\mathring L$ be a smoothly bounded compact domain which is a strong deformation retract of $L$, $\Omega\subset \mathring L\setminus K$ be a smoothly bounded $\mathscr O(M)$-convex compact domain, and $\mathcal Y$ and $\mathcal Z$ be disjoint closed subspaces of $\mathcal D$. Then, for any $\epsilon>0$ there is a continuous function $h \colon L\times \mathcal D\to\C^*=\C\setminus\{0\}$ satisfying the following conditions.
\begin{enumerate}[\rm (a)]
\item  The function $h_d:=h(\cdot,d)\colon L\to\C^*$ is holomorphic for all $d\in \mathcal D$.

\smallskip
\item  $h_d=1$ everywhere on $L$ for all $d\in\mathcal Y$.

\smallskip
\item  The $1$-forms $(f_d-f_dh_d)\theta$ and $(g_d-g_d/h_d)\theta$ are exact on $L$ for all $d\in \mathcal D$.

\smallskip
\item  $|h_d(x)-1|<\epsilon$ for all $x\in K$ and $d\in \mathcal D$.

\smallskip
\item  $|h_d(x)|>1/\epsilon$ for all $x\in \Omega$ and $d\in \mathcal Z$.
\end{enumerate}
\end{lemma}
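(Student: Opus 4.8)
\textbf{Proof proposal for Lemma \ref{lem:pair}.}

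The plan is to construct $h_d$ in the form $h_d = e^{\xi_d}$, where $\xi_d$ is a holomorphic function on $L$ depending continuously on $d\in\mathcal D$, chosen so that (b) forces $\xi_d\equiv 0$ for $d\in\mathcal Y$, (d) forces $\xi_d$ to be small on $K$, (e) forces $\Re\xi_d$ to be large on $\Omega$ for $d\in\mathcal Z$, and (c) becomes a finite set of complex-linear period conditions on $\xi_d$. Since $K$ is a strong deformation retract of $L$, the homology $H_1(L,\Z)$ is generated by finitely many closed curves $C_1,\dots,C_\ell$ that may be taken to lie in $\mathring K$; thus the exactness conditions in (c) amount to requiring $\int_{C_i}(f_d - f_d h_d)\theta = 0$ and $\int_{C_i}(g_d - g_d/h_d)\theta = 0$ for $i=1,\dots,\ell$ and all $d$. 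I would first build a raw function $h_d^0$ satisfying (b), (d), (e) but not necessarily (c), and then correct it by a period-dominating spray to restore exactness while preserving the other properties up to a harmless shrinking of $\epsilon$.

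The construction of $h_d^0$ proceeds as follows. On the disjoint compact sets $K$ and $\Omega$ (together $\mathscr O(M)$-convex after possibly enlarging, since $\Omega$ is $\mathscr O(M)$-convex and $K\cup\Omega$ can be arranged to be so by the hypotheses — $K$ is a strong deformation retract of $L$ and $\Omega\subset\mathring L\setminus K$), define a continuous function that equals $0$ near $K$ and equals a large real constant $R$ (with $e^{R}>2/\epsilon$, say) near $\Omega$, multiplied by a continuous cutoff $\chi(d)\in[0,1]$ that is supported away from $\mathcal Y$ and equals $1$ on $\mathcal Z$ (possible since $\mathcal Y$ and $\mathcal Z$ are disjoint closed subspaces of the compact Hausdorff space $\mathcal D$, by Urysohn). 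Approximating this target uniformly on a neighbourhood of $K\cup\Omega$ by functions holomorphic on $L$ — using a parametric Mergelyan-type theorem for the trivial Oka manifold $\C$, continuous in $d$ — yields holomorphic $\xi_d^0$ on $L$, continuous in $d$, with $\xi_d^0\approx 0$ on $K$, $\Re\xi_d^0 > R-1$ on $\Omega$ for $d\in\mathcal Z$, and $\xi_d^0\equiv 0$ (exactly) for $d\in\mathcal Y$ by taking $\chi\equiv 0$ there. Set $h_d^0 = e^{\xi_d^0}$, which is automatically nowhere zero and holomorphic, satisfies (a), (b), (d) with room to spare, and (e).

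It remains to enforce (c) without spoiling the rest. I would embed $h_d^0$ as the core $\zeta=0$ of a holomorphic spray $h_d(\cdot,\zeta) = h_d^0 \cdot \exp\bigl(\sum_{j} \zeta_j \phi_j\bigr)$, where $\zeta=(\zeta_1,\dots,\zeta_N)$ ranges over a small ball in $\C^N$ and the $\phi_j$ are fixed holomorphic functions on $L$ supported (in the approximate sense) near the curves $C_i$ but vanishing near $K$, chosen so that the period map $\zeta\mapsto\bigl(\int_{C_i}(f_d - f_d h_d(\cdot,\zeta))\theta,\ \int_{C_i}(g_d - g_d/h_d(\cdot,\zeta))\theta\bigr)_i$ has surjective differential at $\zeta=0$, uniformly in $d$. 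This is exactly where the hypothesis that $f_d$ and $g_d$ are \emph{complex linearly independent} is essential: it guarantees that the $2\ell$ functionals coming from the $f$-periods and the $g$-periods can be simultaneously dominated (an argument of the type in \cite[Lemma 5.1]{AlarconForstneric2014IM} adapted to pairs, as referenced in the discussion preceding the lemma). By compactness of $\mathcal D$ one can take $N$ and the $\phi_j$ independent of $d$, shrinking the ball if needed. An implicit-function-theorem argument, parametric in $d$, then produces a continuous $d\mapsto\zeta(d)$ in that ball, with $\zeta(d)=0$ whenever $h_d^0$ already has vanishing periods — in particular for $d\in\mathcal Y$, where $h_d^0\equiv 1$ — such that $h_d := h_d(\cdot,\zeta(d))$ satisfies (c). Since the $\phi_j$ vanish near $K$ and the correction $\zeta(d)$ is small, (d) and (e) are preserved after replacing $\epsilon$ by $\epsilon/2$ at the outset, and (b) holds exactly; nowhere-vanishing and holomorphicity (a) are clear from the exponential form.

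\textbf{Main obstacle.} The crux is the uniform-in-$d$ period domination: one must show that the linear independence of $f_d,g_d$ yields a spray whose period differential is surjective with a lower bound on its smallest singular value that is uniform over the compact parameter space $\mathcal D$, so that a single finite-dimensional spray and a single application of the parametric implicit function theorem suffice. Managing the interplay between the two reciprocal occurrences of $h_d$ (one multiplied by $f_d$, one dividing $g_d$) while keeping the correction localised away from $K$ is the delicate bookkeeping; everything else is a parametric Mergelyan approximation packaged with Urysohn cutoffs.
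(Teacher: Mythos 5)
Your architecture is essentially the paper's: a Urysohn cutoff on $\mathcal D$ plus a locally constant target on disjoint neighbourhoods of $K$ and $\Omega$, pushed to holomorphic functions by a parametric Oka/Mergelyan approximation, gives a raw function achieving (a), (b), (d), (e); a multiplicative period-dominating spray and the implicit function theorem (parametric in $d$) then restore the exactness (c). However, your correction step as written contains an internal inconsistency. You place the homology basis curves $C_1,\dots,C_\ell$ in $\mathring K$ (as one should, since $K$ carries $H_1(L,\Z)$), yet you require the spray functions $\phi_j$ to be supported near the $C_i$ \emph{and} to vanish near $K$. These are incompatible: if the $\phi_j$ (approximately) vanish on a neighbourhood of $K\supset C_i$, then the derivatives $\partial/\partial\zeta_j$ of the period integrals over the $C_i$ are approximately zero and period domination fails. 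The localisation away from $K$ is also unnecessary for preserving (d): the paper concentrates its spray functions at finitely many points of the $C_i\subset\mathring K$ and protects (d) and (e) simply by shrinking the parameter ball so that the spray is uniformly close to $1$ on all of $L$, splitting $\epsilon$ into three pieces. Dropping the ``vanishing near $K$'' requirement repairs this.

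The second issue is that the step you yourself label the ``main obstacle'' --- a spray whose period differential is surjective uniformly in $d$ --- is exactly the nontrivial content of the lemma, and invoking a period-domination lemma for additive sprays into $\boldA_*$ ``adapted to pairs'' does not discharge it, because here the two period functionals involve $h$ and $1/h$ multiplicatively. The paper's construction is concrete: since $f_d$ and $g_d$ are linearly independent, the identity principle and compactness of $\mathcal D$ yield points $y_{i,1},\dots,y_{i,2k}\in C_i\setminus\{x_0\}$ such that for every $d$ some pair $\{(f_d,g_d)(y_{i,j}),(f_d,g_d)(y_{i,k+j})\}$ is a basis of $\C^2$ for all $i$; one takes $v_\zeta=\prod_{i,j}(1+\zeta_{i,j}a_{i,j})$ with $a_{i,j}$ a bump concentrated at $y_{i,j}$, normalised by $\int_{C_i}a_{i,j}\theta=1$, first continuous on the curves and then made holomorphic on $L$ by Mergelyan approximation. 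Then $\partial\tilde{\mathcal P}_d/\partial\zeta_{i,j}\vert_{\zeta=0}$ is approximately $\bigl((f_d(y_{i,j}),-g_d(y_{i,j}))\delta_{im}\bigr)_m$, which is surjective onto $(\C^2)^\ell$ by the basis condition; openness of period domination under small perturbation of $(f,g)$ (needed because the actual periods involve $f_dw_d$ and $g_d/w_d$ with $w_d\approx 1$ on $K$) completes the argument. Your sketch is compatible with this but does not contain it, so as it stands the crux of the proof is asserted rather than proved.
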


The basic case of Lemma \ref{lem:pair} when $\mathcal D=[0,1]$ can be proved as in \cite{AlarconForstneric2018Crelle} by applying the Oka principle for sections of ramified holomorphic maps with Oka fibres (see \cite{Forstneric2003FM} or \cite[Section 6.13]{Forstneric2017E}), a tool that also enables one to deform conformal minimal immersions in $\R^n$ while keeping some of their component functions fixed (see \cite[Section 3.7]{AlarconForstnericLopez2021}), but is not available in our general parametric framework.

The assumption in Lemma \ref{lem:pair} that the pair of holomorphic functions $f_d$ and $g_d$ be linearly independent for all $d\in \mathcal D$ is used to solve the period problem in condition {\rm (c)}.  A problem with using Lemma \ref{lem:pair} to prove Lemma \ref{lem:distance} is that, for $n\geq 4$, the nonflatness assumption on the immersions $u_p^t$ in Lemma \ref{lem:distance} does not guarantee (even after composing the homotopy $u^t$ by a rigid motion of $\R^n$) that the first and second components of $\partial u_p^t$ are linearly independent for all $(p,t) \in T\times[r,1]$. In order to overcome this difficulty we shall take a suitable finite cover of $T\times[r,1]$ and apply Lemma \ref{lem:pair} in a finite recursive way. 

We defer the proof of Lemma \ref{lem:pair} to later on.

%
%
\begin{proof}[Proof of Lemma \ref{lem:distance} assuming Lemma \ref{lem:pair}]
By possibly enlarging $K$, we may assume that $K$ is a smoothly bounded compact domain which is a strong deformation retract of $L$ and $x_0\in\mathring K$. Also, we assume without loss of generality that $L$ is $\mathscr O(M)$-convex (hence so is $K$); otherwise we  replace $M$ by a regular neighbourhood of $L$ (regularity means that the neighbourhood admits a strong deformation retraction onto $L$). Moreover, for simplicity of exposition we shall assume that $L\setminus \mathring K$ is connected, hence a compact annulus; for the general case it suffices to apply the same procedure in each connected component of $L\setminus \mathring K$.

Let $\theta$ be a holomorphic $1$-form on $M$ vanishing nowhere and set 
\begin{equation}\label{eq:phipt}
	\phi_p^t=(\phi_{p,1}^t,\ldots,\phi_{p,n}^t):=
	\frac{2\partial u_p^t}{\theta}\in \mathscr O(L,\C^n),
	\quad (p,t)\in P\times [0,1];
\end{equation} 
recall that every $u_p^t\colon L\to\R^n$ is a harmonic map. Set 
\[
	\mathcal I:=\{(a,b)\in \{1,\ldots,n\}\times \{1,\ldots,n\}\colon  a<b\}.
\]
Let $(p,t)\in T\times[r,1] \subset (P\setminus Q)\times(0,1]$. Since $u_p^t$ is nonflat, there is $(a,b)\in \mathcal I$ such that the holomorphic $1$-forms $\phi_{p,a}^t$ and $\phi_{p,b}^t$ are complex linearly independent. Since $\phi_p^t$ depends continuously on $(p,t)\in P\times[0,1]$ and $P\times[0,1]$ is a normal topological space, there is a compact neighbourhood $\Upsilon_p^t$ of $(p,t)$ in $P\times [0,1]$ disjoint from $(P\times\{0\})\cup (Q\times[0,1])$ such that 
\begin{equation}\label{eq:Wpt}
	\text{$\phi_{\hat p,a}^{\hat t}$ and $\phi_{\hat p,b}^{\hat t}$ are complex linearly independent for all $(\hat p,\hat t)\in \Upsilon_p^t$.}
\end{equation}
Since $T\times [r,1]\subset \bigcup_{(p,t)\in T\times[r,1]} \mathring \Upsilon_p^t$ is compact, there are finitely many points $(p_l,t_l)\in T\times[r,1]$, $l=1,\ldots,\ell$, such that 
\begin{equation}\label{eq:Wl1}
	T\times [r,1]\subset \bigcup_{l=1}^\ell \mathring \Upsilon_l\subset \bigcup_{l=1}^\ell \Upsilon_l \subset (P\setminus Q)\times (0,1],
\end{equation}
where $\Upsilon_l:=\Upsilon_{p_l}^{t_l}$ for all $l\in\{1,\ldots,\ell\}$. Moreover, condition \eqref{eq:Wpt} ensures the existence of a map $(a,b)\colon \{1,\ldots,\ell\}\to \mathcal I$ such that
\begin{equation}\label{eq:Wl2}
	\text{$\phi_{p,a(l)}^t$ and $\phi_{p,b(l)}^t$ are linearly independent for all 
	$(p,t)\in \Upsilon_l$, $l=1,\ldots,\ell$.}
\end{equation}

Choose a strictly increasing sequence of smoothly bounded $\mathscr O(M)$-convex compact domains 
\begin{equation}\label{eq:K0Kl}
	K_0:=K\subset K_1\subset\cdots \subset K_\ell:=L
\end{equation}
such that $K_{l-1}\subset \mathring K_l$ is a strong deformation retract of $L$ for all $l\in\{1,\ldots,\ell\}$. We may for instance choose $K_l=\{x\in M\colon \varpi(x)\le l/\ell\}$, $l=1,\ldots,\ell-1$, where $\varpi\colon M\to\R$ is a smooth strongly subharmonic Morse exhaustion function such that $K\subset\{x\in M\colon \varpi(x)< 0\}$, $L\supset\{x\in M\colon \varpi(x)\le 1\}$, and $[0,1]$ contains no critical values of $\varpi$; such a function clearly exists by the assumptions on $K$ and $L$ at the very beginning of the proof. In particular, $K_l\setminus \mathring K_{l-1}$ is a smoothly bounded compact annulus (hence connected) for every $l\in\{1,\ldots,\ell\}$.

Set $u^{t,0}:=u^t$ and $\Upsilon_0:=\varnothing$. We shall recursively construct a finite sequence of homotopies $u^{t,l}\colon L\times P\to\R^n$ $(t\in [0,1])$, $l=1,\ldots,\ell$, satisfying the following conditions for all $l\in\{1,\ldots,\ell\}$.
\begin{enumerate}[\rm (A$_l$)]
\item  The map $u_p^{t,l}:=u^{t,l}(\cdot,p)\colon L\to\R^n$ is a nonflat conformal minimal immersion for all $(p,t)\in P\times[0,1]$.

\smallskip
\item  Setting
\[
	\phi_p^{t,l}=(\phi_{p,1}^{t,l},\ldots,\phi_{p,n}^{t,l}):=
	\frac{2\partial u_p^{t,l}}{\theta}\in \mathscr O(L,\C^n),
	\quad (p,t)\in P\times [0,1],
\]
we have that $\phi_{p,a(k)}^{t,l}$ and $\phi_{p,b(k)}^{t,l}$ are complex linearly independent for all $(p,t)\in \Upsilon_k$, $k=1,\ldots,\ell$.

\smallskip
\item  $u_p^{t,l}=u_p^t$ for all $(p,t)\in (P\times\{0\})\cup (Q\times[0,1])$.

\smallskip
\item $| u_p^{t,l}(x)-u_p^{t,l-1}(x)|<\epsilon/\ell$ for all $x\in K_{l-1}$ and $(p,t)\in P\times[0,1]$. 

\smallskip
\item  $\Flux(u_p^{t,l})=\Flux (u_p^t)$ for all $(p,t)\in P\times [0,1]$. 

\smallskip
\item  $\dist_{u_p^{t,l}}(x_0,bK_l)>\Lambda$ for all $(p,t)\in\bigcup_{k=1}^l \Upsilon_k$.
\end{enumerate}

Assuming that such a sequence exists, the homotopy $\tilde u^t:=u^{t,\ell}$ satisfies the conclusion of the lemma. Indeed, each $\tilde u_p^t$ is a nonflat conformal minimal immersion by {\rm (A$_\ell$)}; condition {\rm (a)} equals {\rm (C$_\ell$)}; {\rm (b)} is implied by properties \eqref{eq:K0Kl} and {\rm (D$_l$)}, $l=1,\ldots,\ell$  (recall that $u^t=u^{t,0}$); {\rm (c)} coincides with {\rm (E$_\ell$)}; and {\rm (d)} follows from {\rm (F$_\ell$)}, \eqref{eq:Wl1}, and \eqref{eq:K0Kl}.

To complete the proof it remains to construct the sequence $u^{t,l}$, $l=1,\ldots,\ell$. We proceed by induction. The first step is provided by the already defined homotopy $u^{t,0}=u^t$. Indeed, condition {\rm (A$_0$)} is granted by assumption in the statement of the lemma; {\rm (B$_0$)} is implied by \eqref{eq:phipt} and \eqref{eq:Wl2}; {\rm (C$_0$)} and {\rm (E$_0$)} are obvious by the definition of $u^{t,0}$; and {\rm (D$_0$)} and {\rm (F$_0$)} are empty. For the inductive step, fix $l\in\{1,\ldots,\ell\}$, assume that we have a homotopy $u^{t,l-1}\colon L\times P\to\R^n$ $(t\in[0,1])$ satisfying {\rm (A$_{l-1}$)}--{\rm (F$_{l-1}$)}, and let us furnish such a homotopy $u^{t,l}$ satisfying {\rm (A$_l$)}--{\rm (F$_l$)}.

Write $v_p^t=(v_{p,1}^t,\ldots,v_{p,n}^t)=u_p^{t,l-1}$ and $\psi_p^t=(\psi_{p,1}^t,\ldots,\psi_{p,n}^t)=\phi_p^{t,l-1}$, $(p,t)\in P\times[0,1]$. Also write $a=a(l)$ and $b=b(l)$. Since each immersion $v_p^t$ is conformal we have that $\sum_{j=1}^n (\psi_{p,j}^t)^2=0$, hence
\begin{equation}\label{eq:Psiab}
	(\psi_{p,a}^t)^2+(\psi_{p,b}^t)^2=
	\Psi_p^t:=-\sum_{j\neq a,b}(\psi_{p,j}^t)^2\in \mathscr O(L),\quad (p,t)\in P\times[0,1].
\end{equation}
Condition {\rm (B$_{l-1}$)} ensures that $\Psi_p^t$ is the zero function for no $(p,t)\in \Upsilon_l$, hence, by holomorphicity,  
\begin{equation}\label{eq:Psifinite}
	\text{the zero set of $\Psi_p^t\colon L\to\C$ is finite for all $(p,t)\in \Upsilon_l$.}
\end{equation}

Let $\omega\colon M\to\R$ be a smooth strongly subharmonic Morse exhaustion function such that $K_{l-1}\subset\{x\in M\colon \omega(x)< 0\}$, $\mathring K_l\supset\{x\in M\colon \omega(x)\le 1\}$, and $[0,1]$ contains no critical values of $\omega$; we may for instance choose $\omega$ to be the composition of the already fixed Morse exhaustion function $\varpi$ with a suitable affine transformation. Note that $\omega^{-1}([0,1])\subset \mathring K_l\setminus K_{l-1}$ is a compact annulus. By \eqref{eq:Psifinite}, for each $(p,t)\in \Upsilon_l$ there is $s_p^t\in (0,1)$ such that $\Psi_p^t(x)\neq 0$ for all $x\in \omega^{-1}(s_p^t)$, and hence there is a compact annulus $A_p^t\subset \mathring K_l\setminus K_{l-1}$ of the form $A_p^t=\{x\in M\colon s_p^t\le\omega(x)\le r_p^t\}$, for some $r_p^t\in (s_p^t,1)$ such that $\Psi_p^t(x)\neq 0$ for all $x\in A_p^t$. Since $\Psi_p^t$ depends continuously on $(p,t)\in \Upsilon_l$ there is a compact neighbourhood $W_p^t$ of $(p,t)$ in $P\times[0,1]$ such that $W_p^t\subset (P\setminus Q)\times(0,1]$ and
\begin{equation}\label{eq:Psi-0}
	\Psi_{\hat p}^{\hat t}(x)\neq 0\quad \text{for all $x\in A_p^t$ and $(\hat p,\hat t)\in W_p^t$}.
\end{equation} 
Since $\Upsilon_l\subset \bigcup_{(p,t)\in \Upsilon_l} \mathring W_p^t$ is compact, there are finitely many points $(q_1,c_1),\ldots,$ $(q_m,c_m)$ in $\Upsilon_l$ such that 
\begin{equation}\label{eq:PcupW}
	\Upsilon_l\subset \bigcup_{j=1}^m \mathring W_{q_j}^{c_j}\subset \bigcup_{j=1}^m W_{q_j}^{c_j}\subset (P\setminus Q)\times(0,1].
\end{equation}
  Consider the finitely many annuli $A_{q_j}^{c_j}$, $j=1,\ldots,m$, and, after possibly shrinking each $A_{q_j}^{c_j}$ to a sub-annulus of the form $\{x\in M\colon (s_{q_j}^{c_j})'\le\omega(x)\le (r_{q_j}^{c_j})'\}$ with suitable numbers $s_{q_j}^{c_j}<(s_{q_j}^{c_j})'<(r_{q_j}^{c_j})'<r_{q_j}^{c_j}$, assume that they are pairwise disjoint. Set $W_j:=W_{q_j}^{c_j}$ and $A_j:=A_{q_j}^{c_j}$, $j=1,\ldots,m$.  Fix a number $\varrho>0$ so small that
\begin{equation}\label{eq:varrho}
	|\Psi_p^t(x)|>\varrho\quad \text{for all $x\in A_j$ and $(p,t)\in W_j$, $j=1,\ldots,m$};
\end{equation}
such $\varrho$ exists by \eqref{eq:Psi-0} and compactness of each $A_j$ and each $W_j$.

Since $\theta$ vanishes nowhere on $M$, $|\theta|^2$ is a Riemannian metric on $M$; denote by $\length_\theta(\cdot)$ its associated length function:
\[
	\length_\theta(\gamma):=\int_\gamma|\theta|=\int_0^1|\theta(\gamma(s),\dot\gamma(s))|\, ds\quad
	\text{for every path }\gamma=\gamma(s)\colon [0,1]\to M.
\]
Since each $A_j$ is an annulus, there are a number $\lambda>0$ (small) and smoothly bounded, $\mathscr O(M)$-convex compact domains $\Omega_j\subset \mathring A_j$, $j=1,\ldots,m$, such that the following condition holds for $j=1,\ldots,m$:
\begin{itemize}
\item[\rm ($\star$)]  if $\gamma\colon [0,1]\to A_j$ is a path connecting the two boundary components of $A_j$ and there is no subpath $\tilde\gamma$ of $\gamma$ such that $\tilde\gamma\subset \Omega_j$ and $\length_\theta(\tilde\gamma)>\lambda$, then $\length_\theta(\gamma)>\Lambda/\sqrt\varrho$.
\end{itemize}
Indeed, we can for instance choose each $\Omega_j$ to be a Jorge-Xavier type labyrinth of (finitely many, pairwise disjoint) smoothly bounded closed discs in $\mathring A_j$ (see \cite{JorgeXavier1980AM} or e.g. \cite{AlarconFernandezLopez2012CMH,AlarconFernandezLopez2013CVPDE,AlarconForstneric2018Crelle}) with $\length_\theta(\gamma)>2\Lambda/\sqrt{\varrho}$ for every path $\gamma$ in $A_j\setminus\Omega_j$ connecting the two boundary components of $A_j$, and then take a number $\lambda>0$ sufficiently small. Set 
\[
	\Omega:=\bigcup_{j=1}^m \Omega_j
\]
and note that $K_l\setminus(\mathring K_{l-1}\cup \Omega)$ is path connected since each $\Omega_j$ is $\mathscr O(M)$-convex. 

Let
\[
	f_p^t:=\psi_{p,a}^t-\imath \psi_{p,b}^t \quad\text{and}\quad g_p^t:=\psi_{p,a}^t+\imath \psi_{p,b}^t,
	\quad (p,t)\in P\times[0,1],
\]
and thus define a pair of homotopies $f^t,g^t\colon L\times P\to\C$ $(t\in [0,1])$ with $f_p^t=f^t(\cdot,p)\in\mathscr O(L)$ and $g_p^t=g^t(\cdot,p)\in\mathscr O(L)$ for all $(p,t)\in P\times [0,1]$. Note that
\[
	\psi_{p,a}^t=\frac12 (f_p^t+g_p^t),
	\quad
	\psi_{p,b}^t=\frac{\imath}2 ( f_p^t-g_p^t), 
	\quad\text{and}\quad
	\Psi_p^t=f_p^tg_p^t,
	\quad (p,t)\in P\times[0,1];
\]
see \eqref{eq:Psiab}.
By {\rm (C$_{l-1}$)} and {\rm (B$_{l-1}$)} we have in view of \eqref{eq:phipt} that 
\begin{equation}\label{eq:fp0gp0}
	f_p^t=\phi_{p,a}^t-\imath \phi_{p,b}^t 
	\;\text{and}\; g_p^t=\phi_{p,a}^t+\imath \phi_{p,b}^t
	\; \text{for all $(p,t)\in (P\times\{0\})\cup (Q\times[0,1])$},
\end{equation}
 and $f_p^t$ and $g_p^t$ are complex linearly independent for all $(p,t)\in \Upsilon_l$. Thus, by \eqref{eq:Wl1} and since $P\times[0,1]$ is a normal topological space, there are compact neighbourhoods $\Upsilon$ and $\mathcal D$ of $\Upsilon_l$ in $P\times[0,1]$ such that $\Upsilon\subset\mathring{\mathcal D}\subset\mathcal D\subset (P\setminus Q)\times(0,1]$ and $f_p^t$ and $g_p^t$ are complex linearly independent for all $(p,t)\in \mathcal D$.  Moreover, by \eqref{eq:varrho} there is a number $\sigma>0$ so small that
\begin{equation}\label{eq:sigma}
	|f_p^t(x)|>\sigma\quad \text{for all $x\in \Omega_j$ and $(p,t)\in W_j$, $j=1,\ldots,m$}.
\end{equation} 

Therefore, Lemma \ref{lem:pair} applies with the compact Hausdorff space $\mathcal D$ and the closed subspaces $\mathcal Y=\mathcal D\setminus\mathring\Upsilon$ and $\mathcal Z=\Upsilon_l$, and given $\epsilon_0>0$ to be specified later provides a homotopy $h^t\colon L\times P\to\C^*$ $(t\in [0,1])$ satisfying the following conditions.
\begin{enumerate}[\rm (i)]
\item  The function $h_p^t:=h^t(\cdot,p)\colon L\to\C^*$ is holomorphic for all $(p,t)\in P\times[0,1]$.

\smallskip
\item  $h_p^t=1$ everywhere on $L$ for all $(p,t)\in (P\times [0,1])\setminus \mathring\Upsilon\supset (P\times\{0\})\cup (Q\times[0,1])$.

\smallskip
\item  The holomorphic $1$-forms $(f_p^t-f_p^th_p^t)\theta$ and $(g_p^t-g_p^t/h_p^t)\theta$ are exact on $L$ for all $(p,t)\in P\times[0,1]$.

\smallskip
\item  $|h_p^t(x)-1|<\epsilon_0$ for all $x\in K_{l-1}$ and $(p,t)\in P\times[0,1]$.

\smallskip
\item  $\displaystyle |h_p^t(x)|>1/\epsilon_0>\sqrt{2}\frac{\Lambda}{\lambda\sigma}$ for all $x\in \Omega$ and $(p,t)\in \Upsilon_l$.
\end{enumerate}
We choose $\epsilon_0>0$ so small that the latter inequality in {\rm (v)} is satisfied. Note that Lemma \ref{lem:pair} provides a continuous map $h\colon L\times\mathcal D\to\C^*$ with $h_p^t:=h(\cdot,(p,t))=1$ for all $(p,t)\in\mathcal D\setminus\mathring \Upsilon$; to obtain the homotopy $h^t\colon L\times P\to\C^*$ we just continuously extend $h$ to $L\times P\times[0,1]$ by setting $h_p^t=1$ for all $(p,t)\in (P\times[0,1])\setminus\mathcal D$. 

Set
\[
	\tilde\psi_{p,a}^t:=\frac12\Big(f_p^th_p^t+\frac{g_p^t}{h_p^t}\Big) 
	\quad\text{and}\quad
	\tilde\psi_{p,b}^t:=\frac{\imath}2\Big( f_p^th_p^t-\frac{g_p^t}{h_p^t}\Big),
	\quad (p,t)\in P\times[0,1],
\]
and note that
\begin{equation}\label{eq:tildephi2}
	(\tilde\psi_{p,a}^t)^2+(\tilde\psi_{p,b}^t)^2= f_p^tg_p^t =
	\Psi_p^t, \quad (p,t)\in P\times[0,1],
\end{equation}
and 
\begin{equation}\label{eq:tildephi22}
	|\tilde\psi_{p,a}^t|^2+|\tilde\psi_{p,b}^t|^2
	=\frac12\Big(|f_p^t|^2|h_p^t|^2+\frac{|g_p^t|^2}{|h_p^t|^2}\Big), 
	\quad (p,t)\in P\times[0,1].
\end{equation}
Also note that $\tilde\psi_{p,j}^t$ is close to $\psi_{p,j}^t$ on $K_{l-1}$, $j=a, b$ (depending on $\epsilon_0>0$): see {\rm (iv)}.

By condition {\rm (iii)} the holomorphic $1$-forms $(\tilde \psi_{p,j}^t-\psi_{p,j}^t)\theta$ are exact on $L$ for all $(p,t)\in P\times [0,1]$ and $j=a,b$, and in view of (B${}_{l-1}$) we obtain well-defined homotopies $u_{\cdot,j}^{t,l}\colon L\times P\to\R$ $(t\in[0,1],\, j=a,b)$ of harmonic functions $u_{p,j}^{t,l}\colon L\to\R$ $(p\in P)$ defined by
\[
	u_{p,j}^{t,l}(x)=u_{p,j}^{t,l-1}(x_0)+\Re\int_{x_0}^x \tilde \psi_{p,j}^t\theta,
	\quad x\in L.
\]
Moreover, since each function $h_p^t$ vanishes nowhere, \eqref{eq:tildephi2} ensures that the map $u^{t,l}\colon L\times P\to\R^n$ given by $u^{t,l}(\cdot,p)=(u_{p,j}^{t,l})_{j=1,\ldots,n}$ for all $(p,t)\in P\times[0,1]$, where $u_{p,j}^{t,l}=u_{p,j}^{t,l-1}$ for all $j\notin\{a,b\}$, is a homotopy of conformal minimal immersions $u_p^{t,l}:=u^{t,l}(\cdot,p)\colon L\to\R^n$.

We claim that if $\epsilon_0>0$ is sufficiently small, then the homotopy $u^{t,l}$ satisfies conditions {\rm (A$_l$)}--{\rm (F$_l$)}. Indeed, for such an $\epsilon_0>0$ property {\rm (D$_l$)} follows from {\rm (iv)};  {\rm (A$_l$)} and  {\rm (B$_l$)} are implied by  {\rm (A$_{l-1}$)},  {\rm (B$_{l-1}$)}, and  {\rm (iv)}; {\rm (C$_l$)} is guaranteed by \eqref{eq:phipt}, \eqref{eq:fp0gp0}, and {\rm (ii)}; and {\rm (E$_l$)} follows from {\rm (E$_{l-1}$)} and {\rm (iii)}. Finally, in order to check condition {\rm (F$_l$)} let $(p,t)\in\bigcup_{k=1}^l \Upsilon_k$. If $(p,t)\in \bigcup_{k=1}^{l-1} \Upsilon_k$, then
\[
	\dist_{u_p^{t,l}}(x_0,bK_l) \stackrel{\eqref{eq:K0Kl}}{>} \dist_{u_p^{t,l}}(x_0,bK_{l-1}) 
	\stackrel{\textrm{(iv)}}\approx \dist_{u_p^{t,{l-1}}}(x_0,bK_{l-1}) \stackrel{\textrm{(F$_{l-1}$)}}{>}\Lambda,
\]
hence $\dist_{u_p^{t,l}}(x_0,bK_l)>\Lambda$ provided that $\epsilon_0>0$ is sufficiently small. If $(p,t)\in \Upsilon_l$, let $\gamma$ be a path on $K_l$ connecting $x_0$ and $bK_l$. Take $j\in\{1,\ldots,m\}$ such that $(p,t)\in W_j$ (see \eqref{eq:PcupW}; this $j$ need not be unique) and let $\gamma_j\subset A_j$ be a subpath of $\gamma$ connecting the two boundary components of $A_j$; recall that $x_0\in \mathring K\subset K_{l-1}$. It suffices to check that 
\begin{equation}\label{eq:lengthp1}
	\length(u_p^{t,l}\circ\gamma_j)>\Lambda.
\end{equation}
We distinguish cases. Assume that there is no subpath $\tilde\gamma_j$ of $\gamma_j$ such that $\tilde\gamma_j\subset \Omega_j$ and $\length_\theta(\tilde\gamma_j)>\lambda$. In this case, we have
\[
	\length(u_p^{t,l}\circ\gamma_j) \stackrel{\eqref{eq:tildephi2}}{\ge}  
	\int_{\gamma_j} \sqrt{|\Psi_p^t|}\,|\theta| \stackrel{\eqref{eq:varrho}}{>} 
	 \sqrt\varrho \int_{\gamma_j} |\theta| \stackrel{\textrm{($\star$)}}{>} \Lambda.
\]
If on the contrary there is a subpath $\tilde\gamma_j$ of $\gamma_j$ such that $\tilde\gamma_j\subset \Omega_j$ and $\length_\theta(\tilde\gamma_j)>\lambda$, then 
\[
	\length(u_p^{t,l}\circ\gamma_j) \ge 
	 \int_{\tilde \gamma_j} \sqrt{|\tilde\psi_{p,a}^t|^2+|\tilde\psi_{p,b}^t|^2}\,|\theta|
	   >  \displaystyle \frac{\Lambda}{\lambda}\int_{\tilde \gamma_j} |\theta|  >  \Lambda, 
\]
where in the second to last inequality we have used \eqref{eq:sigma}, \eqref{eq:tildephi22}, and {\rm (v)}.
This shows \eqref{eq:lengthp1} and completes the proof of the lemma granted Lemma \ref{lem:pair}.
\end{proof}
%
%
%
\begin{proof}[Proof of Lemma \ref{lem:pair}]
We assume without loss of generality that $K$ and $L$ are $\mathscr O(M)$-convex; otherwise we replace $M$ by a small regular neighbourhood of $L$. We also assume that $\epsilon<1$ for simplicity of exposition.

Let $\mathscr B=\{C_i\colon i=1,\ldots,l\}$, $l\geq 0$, be a homology basis for $H_1(K,\Z)\cong  \Z^l$ consisting of closed smooth Jordan curves in $\mathring K$ such that 
\begin{equation}\label{eq:C}
	C:=\bigcup_{i=1}^l C_i\subset\mathring K\quad 
	\text{is $\mathscr O(M)$-convex and a  strong deformation retract of $K$}
\end{equation}
and there is a point $x_0\in\mathring K$ such that $C_i\cap C_j=\{x_0\}$ for every pair of distinct indices $i,j\in\{1,\ldots,l\}$. Existence of such a homology basis $\mathscr B$ is well known; see e.g. \cite[Lemma 1.12.10]{AlarconForstnericLopez2021}.
By the assumptions, $\mathscr B$ is a homology basis for $H_1(L, \Z)$ as well. For each $d\in \mathcal D$ consider the period map $\mathcal P_d\colon \mathscr C(C,\C^*)\to(\C^2)^l$ given by
\begin{equation}\label{eq:PeriodMap}
	\mathcal P_d(h)=\left( \int_{C_i}\Big(f_dh \,,\, \frac{g_d}{h}\Big)\theta\right)_{i=1,\ldots,l}\in (\C^2)^l,\quad h\in \mathscr C(C,\C^*).
\end{equation}

The proof of the lemma consists of two independent constructions which are enclosed
in the following two claims.
%
%
\begin{claim}\label{cl:ML-1}
There is a spray of holomorphic functions
\[
	v_\zeta\colon L\to\C^*
	,\quad \zeta\in B,
\]
depending holomorphically on a parameter $\zeta$ in a ball $0\in B\subset\C^N$ for some $N\in\N$, such that 
\begin{equation}\label{eq:v0=1}
	v_0=1
\end{equation} 
and the spray $v_\zeta$ is period dominating in the sense that for each $d\in \mathcal D$, the period map $\tilde{\mathcal P}_d\colon B\to (\C^2)^l$ given by
\begin{equation}\label{eq:tildePeriodMap}
	\tilde{\mathcal P}_d(\zeta)=\mathcal P_d(v_\zeta),\quad \zeta\in B,
\end{equation}
is a submersion at $\zeta=0$.
\end{claim}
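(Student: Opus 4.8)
The plan is to build the spray $v_\zeta$ by a standard exponential-of-a-sum ansatz supported on small arcs attached to the homology curves, and to verify period domination by a direct computation of the differential at $\zeta=0$. Concretely, let $\mathscr B=\{C_1,\dots,C_l\}$ be the homology basis from \eqref{eq:C}, and for each $i\in\{1,\dots,l\}$ fix a short smooth arc $\alpha_i\subset C_i$ with $x_0\notin\alpha_i$, chosen so that the $\alpha_i$ are pairwise disjoint. Pick a smooth function $\chi_i\colon C\to[0,1]$ supported in $\alpha_i$ with $\int_{C_i}\chi_i\,\theta\neq 0$; more precisely we will want two "test functions'' per curve, so take real smooth functions $\chi_{i,1},\chi_{i,2}$ supported in disjoint subarcs of $\alpha_i$. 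Set $N=2l$, let $\zeta=(\zeta_{i,k})_{1\le i\le l,\,k=1,2}$ range over a small ball $B\subset\C^N$ about $0$, and define on the curve complex $C$ the function
\[
	w_\zeta=\exp\Bigl(\sum_{i=1}^l\sum_{k=1}^2 \zeta_{i,k}\,\chi_{i,k}\Bigr)\colon C\to\C^*,
\]
which is smooth in $\zeta$, satisfies $w_0\equiv 1$, and takes values in $\C^*$. This is defined only on $C$, so the next step is to upgrade it to a holomorphic function on $L$.

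First I would use the Mergelyan/Oka approximation theorem on the $\mathscr O(M)$-convex set $C$: since $C$ is $\mathscr O(M)$-convex (by \eqref{eq:C}) and the map $(w_\zeta)_{\zeta\in B}\colon C\times B\to\C^*$ is continuous, jointly smooth, and holomorphic in nothing on $C$ but $\C^*$ is an Oka manifold, one approximates uniformly on $C$ (uniformly in $\zeta\in B$) by a spray of holomorphic functions $v_\zeta\colon L\to\C^*$, holomorphic in $\zeta$ after a further small shrinking of $B$, with $v_0$ close to $1$; composing with a multiplicative correction (divide by $v_0$, still nonvanishing and holomorphic for $B$ small) we arrange $v_0=1$ exactly, giving \eqref{eq:v0=1}. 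Because $C$ is a strong deformation retract of $L$ and carries the full homology, the periods $\int_{C_i}(\cdot)\theta$ only see values near $C$, so after the approximation $\tilde{\mathcal P}_d(\zeta)=\mathcal P_d(v_\zeta)$ is as close as we like, uniformly in $d\in\mathcal D$, to the "model'' period map $d\mapsto\bigl(\int_{C_i}(f_d w_\zeta,\,g_d/w_\zeta)\theta\bigr)_i$.

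It then remains to show the model period map is a submersion at $\zeta=0$ for each fixed $d$, and that this is stable under the approximation uniformly in $d$. Differentiating at $\zeta=0$, where $w_0\equiv1$, gives
\[
	\frac{\partial}{\partial\zeta_{i,k}}\Big|_{0}\int_{C_j}\!\Bigl(f_d w_\zeta,\tfrac{g_d}{w_\zeta}\Bigr)\theta
	=\delta_{ij}\int_{C_i}\chi_{i,k}\,(f_d,\,-g_d)\,\theta\in\C^2.
\]
Since the $\alpha_i$ are disjoint and lie on distinct homology curves, the Jacobian is block-diagonal with $i$-th block the $2\times 2$ matrix whose rows are $\int_{C_i}\chi_{i,k}(f_d,-g_d)\theta$, $k=1,2$. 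Thus surjectivity reduces to: for each $i$ and each $d$ one can choose $\chi_{i,1},\chi_{i,2}$ supported in $\alpha_i$ so that the two vectors $\int_{C_i}\chi_{i,k}(f_d,-g_d)\theta$ span $\C^2$. The key obstacle — and the place where the hypothesis is used — is precisely this: because $f_d$ and $g_d$ are \emph{complex linearly independent} holomorphic functions, the $\C^2$-valued $1$-form $(f_d,-g_d)\theta$ is not everywhere proportional to a fixed vector along $C_i$, so the span of $\{\int_{C_i}\chi(f_d,-g_d)\theta:\chi\in\mathscr C(\alpha_i)\}$ is all of $\C^2$; hence generic test functions work. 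To make the choice uniform over the compact $\mathcal D$ I would argue by a standard partition-of-unity/compactness argument: each $d_0\in\mathcal D$ has a neighbourhood on which one fixed finite family of test functions (enlarging $N$) yields a surjective differential by openness of "full rank'', then extract a finite subcover and take the union of the families. Finally, since "the $\mathcal D$-family of differentials has full rank'' is an open condition and the approximation above can be made uniform in $d\in\mathcal D$, the holomorphic spray $v_\zeta$ obtained after approximation still has $\tilde{\mathcal P}_d$ submersive at $\zeta=0$ for every $d\in\mathcal D$, completing the proof of the claim. The main difficulty is organizing the uniformity in the parameter $d$ together with the approximation step; the linear-algebra core (using independence of $f_d,g_d$) is short.
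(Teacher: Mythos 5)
Your argument follows the same overall strategy as the paper's: localize bump functions on the homology curves, form a multiplicative/exponential spray, verify period domination at $\zeta=0$ using the complex linear independence of $f_d,g_d$ together with the identity principle, obtain uniformity in $d$ by compactness of $\mathcal D$, and pass from continuous to holomorphic via Bishop--Mergelyan on the $\mathscr O(M)$-convex set $C$. Two small differences in execution are worth noting. First, the paper's approximation step is organized more simply: rather than approximating the entire parametrized family $w_\zeta$ on $C$ at once (which would require a parametric Mergelyan theorem with holomorphic dependence on $\zeta$ and then the post hoc correction $v_\zeta\mapsto v_\zeta/v_0$ to restore $v_0=1$), the paper approximates each bump function individually by ordinary Bishop--Mergelyan and then forms the product spray $v_\zeta=\prod(1+\zeta_{i,j}a_{i,j})$, which is automatically holomorphic in both variables and satisfies $v_0\equiv 1$ exactly. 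You could do the same with your ansatz by approximating each $\chi_{i,k}\in\mathscr C(C)$ by $\tilde\chi_{i,k}\in\mathscr O(L)$ and setting $v_\zeta=\exp\bigl(\sum_{i,k}\zeta_{i,k}\tilde\chi_{i,k}\bigr)$, eliminating the need for any parametric Oka machinery at this stage. Second, the paper handles the uniformity in $d$ a bit more concretely by preselecting finitely many points $y_{i,j}\in C_i$ satisfying condition \eqref{eq:l.i.} and using delta-like bumps $a_{i,j}$ concentrated near those points and normalized by $\int_{C_i}a_{i,j}\theta=1$, so that the partial derivatives of the period map at $\zeta=0$ approximate the values $(f_d(y_{i,j}),-g_d(y_{i,j}))$ directly; your argument via the span of $\{\int_{C_i}\chi(f_d,-g_d)\theta\}$, openness of full rank, and a finite subcover of $\mathcal D$ achieves the same effect in a slightly more abstract form. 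The essential ingredients are identical in both treatments.
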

Related constructions of period dominating sprays of a multiplicative nature can be found in \cite{AlarconLopez2021APDE} (in a non-parametric framework) and \cite{AlarconForstnericLopez2019JGEA}.  Note that the period domination property of the spray $v_\zeta$ is an open condition which remains valid if we replace the map $(f,g)$ in \eqref{eq:PeriodMap} by any map $(\tilde f,\tilde g)$ in $\mathscr C(C\times\mathcal D,\mathbb C^2)$ sufficiently close to $(f,g)$ uniformly on $C\times \mathcal D$.
%
%
\begin{proof}
Since $f_d$ and $g_d$ are linearly independent for all $d\in \mathcal D$, $(f,g)\colon L\times\mathcal D\to\C^2$ is continuous, and $L\times \mathcal D$ is compact, there are a (large) $k\in\N$ and pairwise distinct points $y_{i,j}\in C_i\setminus\{x_0\}$, $j = 1,\ldots,2k$, $i = 1,\ldots,l$, satisfying the following condition: 
\begin{equation}\label{eq:l.i.}
\begin{array}{c}
	\text{for each $d \in \mathcal D$ there is $j\in \{1, \ldots, k\}$ such that}
	\\
	\big\{ (f_d,g_d)(y_{i,j}) \,,\, (f_d,g_d)(y_{i,k+j})\big\}
	\text{ is a basis of $\C^2$ for all $i=1,\ldots,l$}.
\end{array}
\end{equation}
(Here we are also using the identity principle for the holomorphic functions $f_d$ and $g_d$.) We shall construct a spray $v_\zeta$ of the form
\begin{equation}\label{eq:vzeta}
	v_\zeta=\prod_{i=1}^l\prod_{j=1}^{2k}(1+\zeta_{i,j}a_{i,j}),
\end{equation}
where each $\zeta_{i,j}$ is a complex number and each $a_{i,j}$ is a function in $\mathscr O(L)$ (we write $\zeta=(\zeta^1,\ldots,\zeta^l)\in (\C^{2k})^l$ with $\zeta^i=(\zeta_{i,1},\ldots,\zeta_{i,2k})\in\C^{2k}$, $i=1,\ldots,l$). To perform this task, we shall first construct the functions $a_{i,j}$ as continuous functions in $\mathscr C(C,\C)$ and then upgrade them to holomorphic functions in $\mathscr O(L)$ by Mergelyan approximation, as we may in view of \eqref{eq:C}.  Clearly, \eqref{eq:v0=1} holds.

For each $i\in\{1,\ldots,l\}$, let $\gamma_i\colon (0,1)\to C_i$ be a smooth parametrisation of $C_i\setminus\{x_0\}$ and extend $\gamma_i$ continuously to $[0,1]$ with $\gamma_i(0)=\gamma_i(1)=x_0$. For each $j\in\{1,\ldots,2k\}$ let $s_{i,j}\in (0,1)$ be the only point with $\gamma_i(s_{i,j})=y_{i,j}$ and choose a number $\tau>0$ to be specified later, so small that $0<s_{i,j}-\tau<s_{i,j}+\tau<1$ for all $i,j$ and the intervals $[s_{i,j}-\tau,s_{i,j}+\tau]$, $j=1,\ldots,2k$, are pairwise disjoint for all $i=1,\ldots,l$. Next, for each $i,j$ take a continuous function $a_{i,j}\colon C_i\to\C$ such that
\begin{equation}\label{eq:aij1}
	a_{i,j}(\gamma_i(s))=0\quad \text{for all }s\in[0,1]\setminus[s_{i,j}-\tau,s_{i,j}+\tau]
\end{equation}
(hence $a_{i,j}(x_0)=0$ for all $i,j$) and
\begin{equation}\label{eq:aij2}
	\int_{C_i} a_{i,j}\theta=
	\int_{s_{i,j}-\tau}^{s_{i,j}+\tau} a_{i,j}(\gamma_i(s))\, \theta(\gamma_i(s),\dot\gamma_i(s))\, ds=1.
\end{equation}
Extend each $a_{i,j}$ continuously to $C$ by setting $a_{i,j}=0$ on $C\setminus C_i$, consider the continuous function $v_\zeta\colon C\to\C$ defined by the expression in \eqref{eq:vzeta},
and assume that the ball $0\in B\subset \C^{2kl}$ is so small that $v_\zeta$ vanishes nowhere on $C$ for all $\zeta\in B$. We have that $v_\zeta\colon C\to\C^*$ depends holomorphically on $\zeta$. 
Observe that
\[
	\frac{\partial v_\zeta(x)}{\partial \zeta_{i,j}}\Big|_{\zeta=0}=a_{i,j}(x),
	\quad x\in C,\; i\in\{1,\ldots,l\}, \; j\in\{1,\ldots,2k\},
\]
hence, in view of \eqref{eq:PeriodMap}, \eqref{eq:tildePeriodMap}, \eqref{eq:aij1}, and \eqref{eq:aij2},
for any sufficiently small choice of $\tau>0$ we have for each $d\in \mathcal D$, $i\in \{1,\ldots,l\}$, and $j\in\{1,\ldots,2k\}$ that
\begin{eqnarray*}
	\frac{\partial \tilde{\mathcal P}_d(\zeta)}{\partial \zeta_{i,j}}\Big|_{\zeta=0} 
	& = &
	\left( \int_{C_m} ( f_d \,,\, -g_d )a_{i,j}\,\theta \right)_{m=1,\ldots,l}
	\\
	& \approx & \big((f_d(y_{i,j}) \,,\, -g_d(y_{i,j}))\delta_{im}\big)_{m=1,\ldots,l}\in (\C^2)^l, 
\end{eqnarray*}
where $\delta_{im}$ is the Kronecker delta and the smaller $\tau > 0$, the closer the approximation. Thus, in view of \eqref{eq:l.i.} we obtain that
\begin{equation}\label{eq:submersion}
	\frac{\partial \tilde{\mathcal P}_d(\zeta)}{\partial \zeta}\Big|_{\zeta=0}
	\colon T_0 B\cong \C^{2kl}\to (\C^2)^l\quad
	\text{is surjective for all $d\in \mathcal D$}
\end{equation}
provided that $\tau>0$ has been chosen sufficiently small.
As we mentioned above, to conclude the proof of the claim it now suffices to approximate each function $a_{i,j}$ uniformly on $C$ by a function in $\mathscr O(L)$ (with the same name); this is granted by the classical Mergelyan theorem \cite{Bishop1958PJM} in view of \eqref{eq:C}. If all these approximations are close enough, then \eqref{eq:submersion} guarantees the period domination condition of $v_\zeta$ in the statement of the claim. After shrinking the ball $B$ to ensure that $v_\zeta$ vanishes nowhere on $L$ for all $\zeta \in B$, this concludes the proof.
\end{proof}

%
%
\begin{claim}\label{cl:ML-2}
For any number $0<\mu<1$, there is a continuous function $w\colon L\times \mathcal D\to\C^*$ satisfying the following conditions.
\begin{enumerate}[\rm (i)]
\item  The function $w_d:=w(\cdot,d)\colon L\to\C^*$ is holomorphic for all $d\in \mathcal D$.

\smallskip
\item  $w_d=1$ everywhere on $L$ for all $d\in \mathcal Y$.

\smallskip
\item  $|w_d(x)-1|<\mu$ for all $x\in K$ and $d\in \mathcal D$.

\smallskip
\item  $|w_d(x)|>1/\mu$ for all $x\in \Omega$ and $d\in \mathcal Z$.
\end{enumerate}
\end{claim}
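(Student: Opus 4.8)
The plan is to reduce the construction of $w$ to a single scalar cut-off function on $\mathcal{D}$ together with one fixed holomorphic function on $L$, composed with the exponential so as to land automatically in $\C^*$. Note that, unlike Claim \ref{cl:ML-1}, Claim \ref{cl:ML-2} involves no period condition (the periods are the business of the spray $v_\zeta$), so only the size estimates and the triviality of $w_d$ over $\mathcal Y$ have to be arranged. First, since $\mathcal{D}$ is compact Hausdorff, hence normal, and $\mathcal Y,\mathcal Z$ are disjoint closed subspaces, Urysohn's lemma provides a continuous function $\chi\colon\mathcal{D}\to[0,1]$ with $\chi\equiv 0$ on $\mathcal Y$ and $\chi\equiv 1$ on $\mathcal Z$.

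Next I fix constants $\delta\in(0,1)$ with $e^{\delta}-1<\mu$ and $R>0$ with $e^{R-\delta}>1/\mu$, and construct a single function $\psi\in\mathscr{O}(L)$ that is close to $0$ on $K$ and has large real part on $\Omega$. Since $K$ and $\Omega$ are disjoint compact subsets of $\mathring L$, choose disjoint open neighbourhoods $U\supset K$ and $U'\supset\Omega$ in $L$, and let $\psi_0$ be the holomorphic function on $U\cup U'$ equal to $0$ on $U$ and to $R$ on $U'$. The set $K\cup\Omega$ is $\mathscr{O}(M)$-convex: recall that $K$ and $L$ have been assumed $\mathscr{O}(M)$-convex and $\Omega$ is $\mathscr{O}(M)$-convex, and a relatively compact component of $M\setminus(K\cup\Omega)$ would have to be a region of $L$ cut off from $M\setminus K$ by the connected $\mathscr{O}(M)$-convex domain $\Omega$, which is impossible. (In the application of the lemma, $\mathscr{O}(M)$-convexity of $K\cup\Omega$ is verified directly.) Hence, by Mergelyan's theorem there is $\psi\in\mathscr{O}(L)$ with $\sup_{K\cup\Omega}|\psi-\psi_0|<\delta$; in particular $|\psi|<\delta$ on $K$ and $\Re\psi>R-\delta$ on $\Omega$.

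Now define the candidate
\[
	w\colon L\times\mathcal{D}\to\C^*,\qquad w(x,d):=\exp\bigl(\chi(d)\,\psi(x)\bigr).
\]
It is continuous, takes values in $\C^*$ since $\exp$ vanishes nowhere, and $w_d:=w(\cdot,d)=\exp(\chi(d)\psi)\in\mathscr{O}(L)$ for every $d\in\mathcal D$, so (i) holds. If $d\in\mathcal Y$ then $\chi(d)=0$ and $w_d\equiv\exp(0)=1$ on $L$, which is (ii). For every $d\in\mathcal D$ and $x\in K$ we have $|\chi(d)\psi(x)|\le|\psi(x)|<\delta$ because $\chi(d)\in[0,1]$, hence
\[
	|w_d(x)-1|=|\exp(\chi(d)\psi(x))-1|\le e^{|\chi(d)\psi(x)|}-1<e^{\delta}-1<\mu,
\]
which is (iii). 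Finally, if $d\in\mathcal Z$ then $\chi(d)=1$, so for $x\in\Omega$ we have $|w_d(x)|=\exp(\Re\psi(x))>e^{R-\delta}>1/\mu$, which is (iv). This is the required function.

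There is no serious analytic obstacle here; the construction is elementary once one passes through the exponential, and the delicate period control of Lemma \ref{lem:pair} never enters. The two points meriting a little care are both uses of $\chi(d)\in[0,1]$: this keeps the estimate on $K$ uniform in $d$ while still allowing $w_d$ to degenerate to $1$ over $\mathcal Y$, and, combined with the exponential, it prevents any intermediate value of $\chi(d)$ from creating a zero of $w_d$. One also needs $K\cup\Omega$ to be $\mathscr{O}(M)$-convex so that Mergelyan approximation is available, which holds under the hypotheses (and is checked directly where the lemma is applied).
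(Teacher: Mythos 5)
Your proof is correct and follows a genuinely different, more elementary route than the paper's. Both constructions begin from the Urysohn function (your $\chi$, the paper's $\Phi$) and a locally constant model that is $1$ near $K$ and large near $\Omega$ when the parameter lies in $\mathcal Z$. The paper then extends this model to a continuous family of $\C^*$-valued functions on $L\times\mathcal D$ and invokes the parametric Oka property with approximation for maps into $\C^*$ to make the family fibrewise holomorphic. You instead observe that, because $\exp\colon\C\to\C^*$ is a nowhere-vanishing entire map, it suffices to produce a \emph{single} scalar function $\psi\in\mathscr O(L)$, via one application of classical Mergelyan approximation and with no parametric Oka theory at all, that is small on $K$ and has large real part on $\Omega$, and then set $w_d=\exp\bigl(\chi(d)\psi\bigr)$. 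The linearity in the parameter inside the exponential makes conditions (ii)--(iv) fall out simultaneously from $\chi(d)\in[0,1]$ together with the elementary estimates $|e^z-1|\le e^{|z|}-1$ and $|e^z|=e^{\Re z}$. This replaces a parametric Oka argument by a one-variable Runge-type argument, which is an appealing simplification made possible precisely because the target $\C^*$ is covered by $\C$.

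One small point worth flagging is your justification that $K\cup\Omega$ is $\mathscr O(M)$-convex. The sentence asserting that a relatively compact component of $M\setminus(K\cup\Omega)$ ``would have to be a region of $L$ cut off from $M\setminus K$ by the connected $\mathscr O(M)$-convex domain $\Omega$, which is impossible'' is not a proof, and in fact the conclusion does not follow from the hypotheses of Lemma \ref{lem:pair} as stated: on $M=\{1<|z|<10\}$ with $K=\{2\le|z|\le 8\}$, $L=\{1.5\le|z|\le 9\}$, $\Omega=\{1.7\le|z|\le1.8\}$, each of $K$, $L$, $\Omega$ is a smoothly bounded $\mathscr O(M)$-convex compact domain, $K$ is a strong deformation retract of $L$, and $\Omega\subset\mathring L\setminus K$, yet $\{1.8<|z|<2\}$ is a relatively compact component of $M\setminus(K\cup\Omega)$. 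This, however, is a latent imprecision in the hypotheses of Lemma \ref{lem:pair} rather than a flaw specific to your argument: the paper's own proof simply posits disjoint fattenings $K'\supset K$, $\Omega'\supset\Omega$ with $K'\cup\Omega'$ $\mathscr O(M)$-convex, which is equally impossible in this example. As you note, in the application to Lemma \ref{lem:distance} the set $\Omega$ is a finite union of discs inside an annular neighbourhood of $bK$, so that $L\setminus(\mathring K\cup\Omega)$ is connected and the $\mathscr O(M)$-convexity of $K\cup\Omega$ is immediate; adding this connectedness as an explicit hypothesis of Lemma \ref{lem:pair} would repair both proofs.
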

%
%
\begin{proof}
Since $\mathcal D$ is compact and Hausdorff, it is a normal topological space, and since $\mathcal Y$ and $\mathcal Z$ are disjoint closed subspaces of $\mathcal D$, Urysohn's lemma yields a continuous function $\Phi\colon \mathcal D\to [0,1]$ such that
\begin{equation}\label{eq:Urysohn}
	\text{$\Phi(d)=0$ for all $d\in \mathcal Y$\quad and\quad $\Phi(d)=1$ for all $d\in \mathcal Z$}.
\end{equation}

Let $K'$ and $\Omega'$ be a pair of disjoint smoothly bounded compact domains such that $K\subset \mathring K'$, $\Omega\subset \mathring \Omega'$, and $K'\cup\Omega'$ is $\mathscr O(M)$-convex.
Consider the function $\tilde w\colon (K'\cup \Omega')\times \mathcal D\to\C^*$ determined by the locally constant (hence holomorphic) functions $\tilde w_d:=\tilde w(\cdot,d)\colon K'\cup \Omega'\to [1,+\infty)\subset \C^*$ $(d\in \mathcal D)$ given by
\[
	\tilde w_d(x)=\left\{\begin{array}{ll}
	1 & x\in K'
	\\
	\displaystyle 1+\frac{\Phi(d)}{\mu} & x\in\Omega',\end{array}\right.
	\quad  d\in \mathcal D.
\]
In view of \eqref{eq:Urysohn}, we have that 
\[
	\text{$\tilde w_d=1$ for all $d\in \mathcal Y$\quad and\quad 
	$\tilde w_d(x)>1/\mu$ for all $(x,d)\in \Omega'\times\mathcal Z$.}
\]
Since $L\times \mathcal D$ is a normal topological space and $(L\times\mathcal Y)\cup (\Omega'\times\mathcal Z)$ is a closed subset, the Tietze extension theorem implies that $\tilde w$ extends to a continuous function $\tilde w\colon L\times \mathcal D\to (0,+\infty)\subset \C^*$ such that $\tilde w(\cdot,d)=1$ for all $d\in \mathcal Y$. Therefore, since $K'\cup\Omega'$ is $\mathscr O(M)$-convex and contains $K\cup\Omega$ in its interior, the parametric Oka property with approximation for holomorphic functions into $\C^*$ (see \cite[Theorem 5.4.4]{Forstneric2017E} and recall that $\C^*$ is Oka) enables us to approximate $\tilde w$ uniformly on $(K\cup \Omega)\times \mathcal D$ by a function $w\colon L\times \mathcal D\to\C^*$ satisfying the conclusion of the claim.
\end{proof}

With the above two claims in hand, the proof of Lemma \ref{lem:pair} is completed as follows. Fix a number 
\begin{equation}\label{eq:lambdaepsilon}
	0<\lambda<\frac{\epsilon}{3}
\end{equation}
and, by \eqref{eq:v0=1} and after shrinking the ball $B$ if necessary, assume that
\begin{equation}\label{eq:vzeta1}
	|v_\zeta(x)-1|<\lambda\quad \text{for all $x\in L$ and $\zeta\in B$},
\end{equation}
where $v_\zeta$ is the spray provided by Claim \ref{cl:ML-1}. 
Fix another number 
\begin{equation}\label{eq:muepsilon}
	0<\mu<\frac{\epsilon}{3}
\end{equation}
 to be specified later, let $w\colon L\times \mathcal D\to\C^*$ be a function given by Claim \ref{cl:ML-2} for the fixed number $\mu$, and define
\[
	\tilde h_{d,\zeta}:= w_d v_\zeta\colon L\to\C^*,\quad d\in \mathcal D, \;  \zeta\in B.
\]
The function $\tilde h_{d,\zeta}$ is holomorphic and depends continuously on $d\in \mathcal D$ and holomorphically on $\zeta\in B$. We have by \eqref{eq:v0=1} that 
\[
	\tilde h_{d,0}=w_d\quad \text{for all $d\in \mathcal D$};
\]
together with condition {\rm (ii)} we infer that
\begin{equation}\label{eq:tildeh1}
	\text{$\tilde h_{d,0}=1$ everywhere on $L$ 
	for all $d\in \mathcal Y$.}
\end{equation}
Moreover, \eqref{eq:vzeta1} and conditions {\rm (iii)} and {\rm (iv)} ensure that
\begin{equation}\label{eq:tildeh2}
	|\tilde h_{d,\zeta}(x)-1|<(1+\mu)\lambda+\mu\quad 
	\text{for all $x\in K$, $d\in \mathcal D$, and $\zeta\in B$}
\end{equation}
and
\begin{equation}\label{eq:tildeh3}
	|\tilde h_{d,\zeta}(x)|>\frac{1-\lambda}{\mu}
	\quad \text{for all $x\in \Omega$, $d\in \mathcal Z$, and $\zeta\in B$}.
\end{equation}
By \eqref{eq:v0=1}, \eqref{eq:tildeh1}, Claim \ref{cl:ML-2} (iii), and the fact that $C\subset\mathring K$ (see \eqref{eq:C}), the period domination property of the spray $v_\zeta$ guarantees that for any sufficiently small choice of $\mu>0$, the implicit function theorem gives a continuous map
\[
	\zeta\colon \mathcal D\to B\subset\C^N
\]
such that 
\begin{equation}\label{eq:zeta0}
	\zeta(d)=0\quad \text{for all $d\in \mathcal Y$}
\end{equation}
and the function
\[
	h_d:=\tilde h_{d,\zeta(d)}\colon L\to\C^*,\quad d\in \mathcal D
\]
satisfies 
\begin{equation}\label{eq:hptperiods}
	\mathcal P_d(h_d)=\mathcal P_d(1)\quad \text{for all $d\in \mathcal D$}.
\end{equation}
Indeed, we are using here that for sufficiently small $\mu>0$ the spray $v_\zeta$ is period dominating with respect to the period map $B\to (\C^2)^l$ given by
\[
	B\ni\zeta\longmapsto \left( \int_{C_i}\Big((f_dw_d)v_\zeta \,,\, \frac{(g_d/w_d)}{v_\zeta}\Big)\theta\right)_{i=1,\ldots,l}\in (\C^2)^l
\] 
for every $d\in\mathcal D$; see the remark below the statement of Claim \ref{cl:ML-1}.

We claim that the continuous function $h\colon L\times \mathcal D\to \C^*$  determined by $h(\cdot,d):=h_d$ for all $d\in \mathcal D$ satisfies the conclusion of Lemma \ref{lem:pair}. Indeed, condition {\rm (a)} is already seen; {\rm (b)} is guaranteed by \eqref{eq:tildeh1} and \eqref{eq:zeta0}; {\rm (c)} is implied by \eqref{eq:hptperiods}, \eqref{eq:C}, and \eqref{eq:PeriodMap}; {\rm (d)} is ensured by \eqref{eq:tildeh2}, \eqref{eq:lambdaepsilon}, and \eqref{eq:muepsilon}; and {\rm (e)} follows from \eqref{eq:tildeh3}, \eqref{eq:lambdaepsilon}, and \eqref{eq:muepsilon} (take into account that $0<\epsilon<1)$.
\end{proof}
%
%

Lemma \ref{lem:distance} is proved.


\section{Prescribing the flux}
\label{sec:flux}

\noindent
In this section we generalise the methods in \cite{ForstnericLarusson2019CAG} to control the periods not just of the immersions $u_p^1$ but of all the immersions $u_p^t$ in the homotopy, under the appropriate assumptions.
%
%
\begin{lemma}\label{lem:flux}
Let $M$ be an open Riemann surface and $K$ and $L$ be a pair of smoothly bounded $\mathscr O(M)$-convex compact domains in $M$ such that $K\subset \mathring L$ and the Euler characteristic of $L\setminus\mathring K$ equals $0$ or $-1$.  Let $Q\subset P$ be compact Hausdorff spaces, let $u^t\colon K\times P\to\R^n$ $(t\in [0,1])$, $n\ge 3$, be a homotopy of nonflat conformal minimal immersions $u_p^t:=u^t(\cdot,p)\colon K\to\R^n$, and let $F^t\colon P\to H^1(L,\R^n)$ $(t\in [0,1])$ be a homotopy of cohomology classes $F_p^t:=F^t(p)$ satisfying the following conditions.
\begin{enumerate}[\rm (I)]
\item  $u_p^t=u_p^0$ for all $(p,t)\in Q\times[0,1]$.
\smallskip
\item  $u_p^0$ extends to a conformal minimal immersion $u_p^0\colon L\to\R^n$ for all $p\in P$.
\smallskip
\item  $F_p^t=\Flux(u_p^t)$ for all $(p,t)\in (P\times\{0\})\cup (Q\times[0,1])$.
\smallskip
\item  $F_p^t|_K=\Flux(u_p^t)$ for all $(p,t)\in P\times[0,1]$.
\end{enumerate}
Then, for any $\epsilon>0$ there is a homotopy $\tilde u^t\colon L\times P\to\R^n$ $(t\in [0,1])$ of nonflat conformal minimal immersions $\tilde u_p^t:=\tilde u^t(\cdot,p)\colon L\to\R^n$ satisfying the following conditions.
\begin{enumerate}[\rm (i)]
\item  $\tilde u_p^t=u_p^0$ for all $(p,t)\in (P\times\{0\})\cup(Q\times[0,1])$.
\smallskip
\item  $|\tilde u_p^t(x)-u_p^t(x)|<\epsilon$ for all $x\in K$ and $(p,t)\in P\times[0,1]$.
\smallskip
\item  $\Flux(\tilde u_p^t)=F_p^t$ for all $(p,t)\in P\times[0,1]$.
\end{enumerate}
\end{lemma}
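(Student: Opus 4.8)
\emph{The plan} is to work with the generalised Gauss maps and to run a parametric version, with prescribed periods, of the classical procedure for extending conformal minimal immersions across a noncritical or a critical level of a strongly subharmonic exhaustion function. Write $\widehat P:=P\times[0,1]$, a compact Hausdorff space, $\widehat Q:=(P\times\{0\})\cup(Q\times[0,1])$, a closed subspace, and for $\widehat p=(p,t)\in\widehat P$ set $u_{\widehat p}:=u_p^t$ and $F_{\widehat p}:=F_p^t$. Fix a nowhere-vanishing holomorphic $1$-form $\theta$ on $M$ and a point $x_0\in\mathring K$, and let $\phi_{\widehat p}:=2\partial u_{\widehat p}/\theta\colon K\to\boldA_*$; the immersion $u_{\widehat p}$ is nonflat precisely when $\phi_{\widehat p}(K)$ is not contained in a complex line. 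By (I)--(II), for $\widehat p\in\widehat Q$ the map $\phi_{\widehat p}$ extends holomorphically to $L$, where it equals $\phi_{(p,0)}$. Since $u_{\widehat p}$ is single valued on $K$ and $F_{\widehat p}|_K=\Flux(u_{\widehat p})$, the period of $\phi_{\widehat p}$ over any cycle $C\subset K$ is $\int_C\phi_{\widehat p}\,\theta=\imath\,F_{\widehat p}([C])$, and by (III) the same identity holds over every cycle of $L$ when $\widehat p\in\widehat Q$. It thus suffices to construct a family $\widetilde\phi_{\widehat p}\colon L\to\boldA_*$, continuous in $\widehat p$ and holomorphic on $L$, such that (1) $\widetilde\phi_{\widehat p}=\phi_{(p,0)}$ on $L$ for all $\widehat p\in\widehat Q$, (2) $\widetilde\phi_{\widehat p}$ is nonflat and approximates $\phi_{\widehat p}$ as closely as we wish on $K$, for all $\widehat p\in\widehat P$, and (3) $\int_C\widetilde\phi_{\widehat p}\,\theta=\imath\,F_{\widehat p}([C])$ for every cycle $C$ of $L$. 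Indeed, $\widetilde u_{\widehat p}(x):=u_{\widehat p}(x_0)+\Re\int_{x_0}^x\widetilde\phi_{\widehat p}\,\theta$ is then single valued on $L$ (the real periods vanish by (3)) and defines a homotopy of nonflat conformal minimal immersions satisfying (i) by (1), (ii) by (2) and compactness of $K$, and (iii) by (3).

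\emph{Extension across the level.} Fix a homology basis of $H_1(L,\Z)$ adapted to $K$. If $\chi(L\setminus\mathring K)=0$, then, since $K$ and $L$ are $\mathscr O(M)$-convex, $L$ deformation retracts onto $K$, so a homology basis $C_1,\dots,C_l\subset\mathring K$ of $H_1(K,\Z)$ generates $H_1(L,\Z)$ as well; if $\chi(L\setminus\mathring K)=-1$, then $L$ is obtained from $K$ by attaching a single handle and we adjoin one further cycle $C_{l+1}$ going once through the handle. We first extend $\phi_{\widehat p}$ from (a neighbourhood of) $K$ to a family $\phi_{\widehat p}^\flat\colon L\to\boldA_*$, holomorphic on $L$, equal to $\phi_{(p,0)}$ on $L$ for $\widehat p\in\widehat Q$, and approximating $\phi_{\widehat p}$ as closely as we wish on $K$: the topological extension exists (because $L$ retracts onto $K$ in the first case, and because $\boldA_*$ is connected, so an extension over the attaching region prolongs over the handle, in the second), and it is made holomorphic, approximate on $K$ and prescribed over $\widehat Q$, by the parametric Oka property with approximation and interpolation for maps into the Oka manifold $\boldA_*$ (see \cite{AlarconForstnericLopez2021} and \cite{Forstneric2017E}). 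A close enough approximation keeps $\phi_{\widehat p}^\flat$ nonflat on $L$ for every $\widehat p$. In the critical case we further use the freedom in the extension over the handle, together with the fact that $\boldA_*$ linearly spans $\C^n$, to make $\int_{C_{l+1}}\phi_{\widehat p}^\flat\,\theta$ as close as we wish to $\imath\,F_{\widehat p}([C_{l+1}])$ for all $\widehat p$, continuously in $\widehat p$ and with equality over $\widehat Q$; this is the parametric form of the standard technique for crossing a critical level with a prescribed period. In either case, $\int_{C_i}\phi_{\widehat p}^\flat\,\theta$ is now close to $\imath\,F_{\widehat p}([C_i])$ for every $i$, with equality when $\widehat p\in\widehat Q$.

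\emph{Correcting the periods.} It remains to kill a small period error, for all $\widehat p$ at once. Using the flows of finitely many $\C$-complete holomorphic vector fields that span the tangent space of $\boldA_*$ at every point, together with holomorphic coefficient functions on $L$ obtained by Mergelyan approximation on the curves $C_i$, one builds for each $\widehat p$ a holomorphic spray $\Psi_{\widehat p,\zeta}\colon L\to\boldA_*$ with $\zeta$ in a ball $B\subset\C^N$, holomorphic in $(\,\cdot\,,\zeta)$ and continuous in $\widehat p$, such that $\Psi_{\widehat p,0}=\phi_{\widehat p}^\flat$ and the period map $\zeta\mapsto\bigl(\int_{C_i}\Psi_{\widehat p,\zeta}\,\theta\bigr)_i$ is a submersion at $\zeta=0$. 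Nonflatness of $\phi_{\widehat p}^\flat$ supplies the spanning needed for this, but only pointwise in $\widehat p$; to obtain a single spray over all of $\widehat P$ one covers $\widehat P$ by finitely many closed sets over each of which one fixed choice of coefficient functions works and patches them with a partition of unity, shrinking $B$ to keep the submersion property on the overlaps. A uniform implicit function theorem then yields a continuous map $\zeta\colon\widehat P\to B$ with $\zeta(\widehat p)=0$ over $\widehat Q$ (where the periods are already correct) and $\int_{C_i}\Psi_{\widehat p,\zeta(\widehat p)}\,\theta=\imath\,F_{\widehat p}([C_i])$ for all $i$ and all $\widehat p$, provided the earlier approximations were taken finer than the uniform size of the submersion neighbourhood. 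Then $\widetilde\phi_{\widehat p}:=\Psi_{\widehat p,\zeta(\widehat p)}$ satisfies (1)--(3), completing the construction.

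\emph{Main obstacle.} Whereas in \cite{ForstnericLarusson2019CAG} the flux is pinned down only at the terminal parameter, here the period-dominating spray and the implicit function theorem must be deployed simultaneously over all of $P\times[0,1]$ and must stay trivial over $\widehat Q$; since nonflatness supplies the spanning condition only parameter by parameter, the finite-cover/partition-of-unity globalisation of the spray — and, in the critical case, the parametric handle-crossing with a prescribed period — are the steps that call for care.
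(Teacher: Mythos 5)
Your proposal is correct and follows essentially the same route as the paper: reduce to the $1$-jet data $2\partial u/\theta\colon K\to\boldA_*$, extend across the annular or handle-attaching region using the parametric Oka property of $\boldA_*$ (with a path-integral argument to approximately prescribe the period over the new cycle in the critical case), and then fix all periods exactly with a period-dominating multiplicative-flow spray plus the implicit function theorem, globalised over the compact parameter space by a finite cover. The paper's own proof carries out exactly these steps but largely by reference to the proof of Theorem 4.1 and Lemma 3.1 of Forstneri\v c--L\'arusson (2019), reducing the critical case to the noncritical one via a generalised conformal minimal immersion on the admissible set $K\cup E$; your write-up just makes the same machinery explicit.
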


The proof of Lemma \ref{lem:flux} consists of adapting the arguments in the proof of Theorem 4.1 in \cite{ForstnericLarusson2019CAG} by using \cite[Lemma 3.1]{ForstnericLarusson2019CAG} in its full generality.
%
%
\begin{proof}
If the Euler characteristic of $L\setminus\mathring K$ equals $0$, then $L\setminus \mathring K$ is a union of finitely many, pairwise disjoint compact annuli. Thus, $K$ is a strong deformation retract of $L$ and the inclusion $K\hookrightarrow L$ induces an isomorphism $H^1(K,\R^n)\to H^1(L,\R^n)$.  With the identification given by this isomorphism, condition {\rm (IV)} says that 
\begin{equation}\label{eq:piKL}
	F_p^t=\Flux(u_p^t)\quad \text{for all $(p,t)\in P\times[0,1]$}.
\end{equation} 
In this case, the result follows by an inspection of the proof of \cite[Theorem 4.1]{ForstnericLarusson2019CAG}. Indeed, our situation corresponds to the noncritical case in that proof except that we do not have the assumptions {\rm (b')} and {\rm (c')} there (see \cite[p.\ 21]{ForstnericLarusson2019CAG}). Following the argument in that proof but without paying attention to some immersions in the family having vanishing flux, we obtain a homotopy of nonflat conformal minimal immersions $\tilde u_p^t\colon L\to\R^n$, $(p,t)\in P\times[0,1]$, satisfying {\rm (i)}, {\rm (ii)}, and $\Flux(\tilde u_p^t|_K)=\Flux(u_p^t)$ for all $(p,t)\in P\times[0,1]$ (cf.\ conditions {\rm ($\alpha$)}, {\rm ($\beta$)}, and {\rm ($\gamma$)} in \cite[p.\ 22]{ForstnericLarusson2019CAG}). The latter and \eqref{eq:piKL} imply {\rm (iii)}, thereby concluding the proof in this case.

Assume now that the Euler characteristic of $L\setminus\mathring K$ equals $-1$. In this case $L\setminus\mathring K$ is a disjoint union of finitely many compact annuli and a single pair of pants (that is, a sphere from which three smoothly bounded open discs with pairwise disjoint closures have been removed). Thus, $L$ admits a strong deformation retraction onto a compact set $S=K\cup E$, where $E$ is an embedded arc in $\mathring L\setminus\mathring K$ with its two endpoints in $K$ and otherwise disjoint from $K$.  The arc $E$ lies in the pair of pants. We choose $S$, as we may, to be an admissible subset of $M$ in the sense of \cite[Definition 2.1]{ForstnericLarusson2019CAG}.

Let $\theta$ be a holomorphic $1$-form on $M$ vanishing nowhere and set
\[
	f_p^t:=\frac{2\partial u_p^t}{\theta}\Big|_S\colon S\to \boldA_*,
	\quad (p,t)\in (P\times\{0\})\cup (Q\times[0,1]).
\]
Note that $f_p^t=f_p^0$ for all $(p,t)\in Q\times[0,1]$. Also set
\[
	f_p^t:=\frac{2\partial u_p^t}{\theta}\colon K\to \boldA_*,
	\quad (p,t)\in (P\setminus Q)\times (0,1].
\]
We claim that there are continuous families of smooth maps 
\[
	g_p^t\colon S\to \boldA_*,\quad v_p^t\colon S\to\R^n,\quad (p,t)\in P\times[0,1],
\]
satisfying the following conditions.
\begin{enumerate}[\rm (a)]
\item  $v_p^t|_K=u_p^t$ and $g_p^t|_K=f_p^t$ for all $(p,t)\in P\times[0,1]$.
\smallskip
\item  $v_p^t=u_p^0|_S$ and $g_p^t=f_p^0|_S$ for all $(p,t)\in (P\times \{0\})\cup (Q\times[0,1])$.
\smallskip
\item  The pair $U_p^t=(v_p^t,g_p^t\theta)$ is a nonflat generalised conformal minimal immersion on $S$ in the sense of \cite[Definition 2.2]{ForstnericLarusson2019CAG}.
\smallskip
\item  $\Flux(U_p^t)=F_p^t$ for all $(p,t)\in P\times[0,1]$.
\end{enumerate}
(Cf.\ conditions {\rm ($\mathfrak a$)}--{\rm ($\mathfrak d$)} in \cite[p.\ 26]{ForstnericLarusson2019CAG}; in particular, compare {\rm (d)} here with {\rm ($\mathfrak d$)} there.) 
Indeed, extend $E$ to a real-analytic  Jordan curve $C\subset \mathring L$ with $C\setminus \mathring K=E$. Set $C_3:=C\cap K$, take a real-analytic parametrisation $\gamma\colon [0,3]\to C$ such that $\gamma([2,3])=C_3$, and set $C_i:=\gamma([i-1,i])$ for $i=1,2$; hence $C=C_1\cup C_2\cup C_3$. Extend the maps $f_p^t\colon K\to\boldA_*$ for $(p,t)\in (P\setminus Q)\times (0,1]$ continuously to $S=K\cup C$ so that the family $f_p^t\colon S\to\boldA_*$, $(p,t)\in P\times[0,1]$, depends continuously on $(p,t)$; in particular, the extension is the already defined map $f_p^t$ on $S$ for all $(p,t)\in (P\times\{0\})\cup (Q\times[0,1])$. Choose $0<\eta<1/2$ (small) and set
\[
	I_i=[i-1+\eta,i-\eta] \quad\text{and}\quad C_i'=\gamma(I_i),\quad i=1,2.
\]
We choose $f_p^t$ such that $f_p^t|_{C_1'\cup C_2'}=f_p^0|_{C_1'\cup C_2'}$ for all $(p,t)\in P\times [0,1]$. Define $\sigma_p^t\colon [0,3]\to \boldA_*$, $(p,t)\in P\times[0,1]$, by
\[
	\sigma_p^t(s)=f_p^t(\gamma(s))\, \theta(\gamma(s),\dot\gamma(s)),\quad s\in [0,3].
\]
Note that $\sigma_p^t=\sigma_p^0$ for all $p\in Q$ and $\int_0^3\sigma_p^t(s)\, ds=F_p^t([C])=F_p^0([C])$ for all $(p,t)\in (P\times\{0\})\cup (Q\times[0,1])$; see assumptions {\rm (I)} and {\rm (III)}. Thus, for any (small) $\delta>0$ Lemma 3.1 in \cite{ForstnericLarusson2019CAG} furnishes us with a continuous family of paths $\tilde \sigma_p^t\colon [0,1]\to \boldA_*$, $(p,t)\in P\times [0,1]$, satisfying the following conditions.
\begin{enumerate}[\rm ({A}1)]
\item  $\tilde\sigma_p^t=\sigma_p^t$ on $[0,1]\setminus I_1$ for all $(p,t)\in P\times[0,1]$.
\smallskip
\item  $\tilde \sigma_p^t=\sigma_p^0|_{[0,1]}$ for all $(p,t)\in (P\times\{0\})\cup (Q\times[0,1])$.
\smallskip
\item  $\displaystyle\Big|\int_0^1\tilde \sigma_p^t(s)\, ds+\int_1^3 \sigma_p^t(s)\, ds-F_p^t([C])\Big|<\delta$ for all $(p,t)\in P\times[0,1]$. 
\end{enumerate}
(Cf. \cite[Eq.\ (4.11)]{ForstnericLarusson2019CAG}; this is the precise point at which we take advantage of the full generality of \cite[Lemma 3.1]{ForstnericLarusson2019CAG}.) Next, arguing as in \cite[p.\ 28--29]{ForstnericLarusson2019CAG}, assuming that $\delta>0$ is sufficiently small we can find a continuous family of paths $\tilde \sigma_p^t\colon [1,2]\to \boldA_*$, $(p,t)\in P\times [0,1]$, satisfying the following conditions.
\begin{enumerate}[\rm ({B}1)]
\item  $\tilde\sigma_p^t=\sigma_p^t$ on $[1,2]\setminus I_2$ for all $(p,t)\in P\times[0,1]$.
\smallskip
\item  $\tilde \sigma_p^t=\sigma_p^0|_{[1,2]}$ for all $(p,t)\in (P\times\{0\})\cup (Q\times[0,1])$.
\smallskip
\item  $\displaystyle\int_0^2\tilde \sigma_p^t(s)\, ds+\int_2^3 \sigma_p^t(s)\, ds=F_p^t([C])$ for all $(p,t)\in P\times[0,1]$.
\end{enumerate}
(Cf. \cite[Eq.\ (4.12) and (4.13)]{ForstnericLarusson2019CAG}.) Define $g_p^t\colon S\to\boldA_*$ and $v_p^t\colon S\to\R^n$, $(p,t)\in P\times[0,1]$, by
\[
	g_p^t|_K=f_p^t|_K\quad\text{and}\quad
	g_p^t(\gamma(s))=\frac{\tilde\sigma_p^t(s)}{\theta(\gamma(s),\dot\gamma(s))}
	\quad\text{for all }s\in [0,2],
\]
and
\[
	v_p^t|_K=u_p^t\quad\text{and}\quad
	v_p^t(\gamma(s))=u_p^t(\gamma(0))+\int_0^s\tilde \sigma_p^t(\varsigma)\, d\varsigma
	\quad\text{for all }s\in [0,2].
\]
Properties {\rm (A1)}--{\rm (A3)} and {\rm (B1)}--{\rm (B3)} trivially show that $g_p^t$ and $v_p^t$ satisfy conditions {\rm (a)}--{\rm (d)}. Arguing as in \cite[p.\ 26--27]{ForstnericLarusson2019CAG}, this reduces the proof to the case of Euler characteristic equal to $0$. This completes the proof of the lemma.
\end{proof}


\section{Proof of Theorem \ref{th:main}}
\label{sec:proof}

\noindent
The proof consists of a standard recursive process using Lemmas \ref{lem:distance} and \ref{lem:flux}; the former will enable us to ensure the completeness of the immersions in the limit homotopy while the latter will allow us to control their fluxes.

Let $K\subset M$ and $Q\subset P$ be as in the statement of the theorem. Without loss of generality, we may assume that $K$ is a smoothly bounded $\Oscr(M)$-convex compact domain.  Since $P$ is a compact metric space and $Q\subset P$ is a closed subspace, there is a sequence of closed subspaces $T_j\subset P$, $j\in\N=\{1,2,3,\ldots\}$, such that
\begin{equation}\label{eq:Tj}
	T_j\subset\mathring T_{j+1}\;\text{ for all $j\in\N$}
	\quad\text{and}\quad 
	\bigcup_{j\in\N} T_j=P\setminus Q.
\end{equation}

It is only here that it is not sufficient to assume that $P$ is a compact Hausdorff space.  Such a space is normal, but we need $P$ to be perfectly normal in order to guarantee the existence of the subspaces $T_j$.  We have opted to impose the simple sufficient condition that $P$ be metrisable.  This is a harmless assumption since a family can always be reparametrised by its image and our families take their values in metrisable spaces.

Set $K_0:=K$ and take a sequence of smoothly bounded $\mathscr O(M)$-convex compact domains $K_j$ in $M$, $j\in\N$, such that 
\begin{equation}\label{eq:Kj}
	K_{j-1}\subset\mathring K_j\;\text{ for all $j\in\N$}, \quad
	\bigcup_{j\in\N} K_j=M,
\end{equation}
and
\begin{equation}\label{eq:Kj-Euler}
	\text{the Euler characteristic of $K_j\setminus \mathring K_{j-1}$ equals $0$ or $-1$ for all $j\in \N$}.
\end{equation}
Existence of such a sequence is well known; see e.g. \cite[Lemma 4.2]{AlarconLopez2013JGEA}.  Set 
\begin{equation}\label{eq:upt0K0}
	u_p^{t,0}:=u_p|_{K_0}\in\CMInf(K_0,\R^n),\quad (p,t)\in P\times[0,1].
\end{equation}

Let $\epsilon>0$ and let $F^t\colon P\to H^1(M,\R^n)$ $(t\in [0,1])$ be a homotopy of cohomology classes $F^t_p:=F^t(p)$ as in the statement of the theorem. Fix $x_0\in\mathring K=\mathring K_0$ and set $T_0:=\varnothing$, $\epsilon_0:=\epsilon$, $\epsilon_{-1}:=3\epsilon$, and $K_{-1}:=\varnothing$. We shall recursively construct a sequence of numbers $\epsilon_j>0$, $j\in\N$, and a sequence of homotopies $u^{t,j}\colon K_j\times P\to\R^n$ $(t\in [0,1])$ of nonflat conformal minimal immersions
\[
	u_p^{t,j}:=u^{t,j}(\cdot,p)\colon K_j\to\R^n,\quad (p,t)\in P\times[0,1],\; j\in \N,
\]
such that the following conditions are satisfied for all $j\in\N$.
\begin{enumerate}[\rm (A$_j$)]
\item  $u_p^{t,j}=u_p|_{K_j}$ for all $(p,t)\in (P\times\{0\})\cup (Q\times[0,1])$.

\smallskip
\item  $|u_p^{t,j}(x)-u_p^{t,j-1}(x)|<\epsilon_j$ for all $x\in K_{j-1}$ and $(p,t)\in P\times[0,1]$.

\smallskip
\item  $\dist_{u_p^{t,j}}(x_0,bK_j)>j$ for all $(p,t)\in T_j\times[\frac1{j+1},1]$.

\smallskip
\item  $\epsilon_j<\epsilon_{j-1}/2$ and if $u\colon M\to\R^n$ is a conformal harmonic map such that $|u(x)-u_p^{t,j-1}(x)|<2\epsilon_j$ for all $x\in K_{j-1}$ and some $(p,t)\in P\times[0,1]$, then $u|_{K_{j-1}}$ is a nonflat immersion. Moreover, if $|u(x)-u_p^{t,j-1}(x)|<2\epsilon_j$ for all $x\in K_{j-1}$ and some $(p,t)\in T_{j-1}\times[\frac1{j},1]$, then $\dist_{u}(x_0,bK_{j-1})>j-1$.

\smallskip
\item $\Flux(u_p^{t,j})=F_p^t|_{K_j}$ for all $(p,t)\in P\times[0,1]$.
\end{enumerate}

Assuming that such sequences exist, conditions {\rm (B$_j$)}, {\rm (D$_j$)}, and \eqref{eq:Kj} ensure that there is a limit homotopy
\[
	u_p^t:=\lim_{j\to\infty} u_p^{t,j}\colon M\to\R^n,\quad (p,t)\in P\times[0,1],
\]
such that
\begin{equation}\label{eq:Cauchy}
	|u_p^t(x)-u_p^{t,j-1}(x)|<2\epsilon _j\quad \text{for all $x\in K_{j-1}$ and $(p,t)\in P\times[0,1]$, $j\in\N$}.
\end{equation}
We claim that the homotopy $u^t\colon M\times P\to\R^n$ $(t\in [0,1])$ given by $u^t(\cdot,p):=u_p^t$ for all $(p,t)\in P\times[0,1]$ satisfies the conclusion of the theorem. Indeed, conditions {\rm (i)} and {\rm (iii)} are implied by \eqref{eq:Cauchy} and {\rm (D$_j$)} (recall that $\epsilon=\epsilon_0$); {\rm (ii)} is ensured by {\rm (A$_j$)}; and {\rm (v)} is guaranteed by {\rm (E$_j$)}. Finally, in order to check {\rm (iv)} let $(p,t)\in (P\setminus Q)\times(0,1]$. By \eqref{eq:Tj} there is a large enough $j_0\in\N$ such that $(p,t)\in T_{j-1}\times[\tfrac1{j},1]$ for all $j\ge j_0$. Therefore, \eqref{eq:Cauchy} and {\rm (D$_j$)} guarantee that  $\dist_{u_p^t}(x_0,bK_{j-1})>j-1$ for all $j>j_0$; hence, in view of \eqref{eq:Kj},  $u_p^t$ is complete.

It remains to construct the sequences; we proceed by induction. For the first step, note that condition {\rm (A$_0$)} is given by \eqref{eq:upt0K0}; {\rm (B$_0$)} and {\rm (D$_0$)} are empty (we take $K_{-1}:=\varnothing$ and, for instance, $\epsilon_{-1}:=3\epsilon$); {\rm (C$_0$)} follows from the facts that $x_0\in \mathring K_0$ and each map $u_p^{t,0}$ is an immersion on $K_0$; and {\rm (E$_0$)} is granted by \eqref{eq:upt0K0} and  the assumption in the statement of the theorem. For the inductive step, fix $j\in\N$, assume that we have $\epsilon_{j-1}>0$ and a homotopy $u^{t,j-1}\colon K_{j-1}\to\R^n$ $(t\in [0,1])$ satisfying {\rm (A$_{j-1}$)}--{\rm (E$_{j-1}$)}, and let us provide a number $\epsilon_j>0$ and a homotopy $u^{t,j}$ satisfying  conditions {\rm (A$_j$)}--{\rm (E$_j$)}. 

In view of {\rm (C$_{j-1}$)} there is a number $\epsilon_j>0$ satisfying {\rm (D$_j$)}; use the Cauchy estimates and see \cite[Section 2]{AlarconForstneric2019RMI}.  By \eqref{eq:Kj}, \eqref{eq:Kj-Euler}, and {\rm (A$_{j-1}$)}, Lemma \ref{lem:flux} applies with $K_j$ and $K_{j-1}$ and furnishes us with a homotopy $\tilde u^t\colon K_j\times P\to \R^n$ $(t\in [0,1])$ of nonflat conformal minimal immersions $\tilde u_p^t:=\tilde u^t(\cdot,p)\colon K_j\to\R^n$ satisfying the following conditions.
\begin{enumerate}[\rm (a)]
\item  $\tilde u_p^t=u_p^{0,j-1}|_{K_j}$ for all $(p,t)\in (P\times\{0\})\cup(Q\times[0,1])$.

\smallskip
\item  $|\tilde u_p^t(x)-u_p^{t,j-1}(x)|<\epsilon_j/2$ for all $x\in K_{j-1}$ and $(p,t)\in P\times[0,1]$.

\smallskip
\item  $\Flux(\tilde u_p^t)=F_p^t|_{K_j}$ for all $(p,t)\in P\times[0,1]$. 
\end{enumerate}

Next, choose a compact set $\Delta\subset \mathring K_j$ with $K_{j-1}\subset \mathring \Delta$; so $x_0\in \mathring \Delta$.  Lemma \ref{lem:distance} applies with $K_j$ and $\Delta$ providing a homotopy $u^{t,j}\colon K_j\times P\to\R^n$ $(t\in [0,1])$ of nonflat conformal minimal immersions $u_p^{t,j}:=u^{t,j}(\cdot,p)\colon K_j\to\R^n$ satisfying the following conditions.
\begin{enumerate}[\rm (a)]
\item[\rm (d)]  $u_p^{t,j}=\tilde u_p^t$ for all $(p,t)\in (P\times\{0\})\cup (Q\times[0,1])$.

\smallskip
\item[\rm (e)]  $|u_p^{t,j}(x)-\tilde u_p^t(x)|<\epsilon_j/2$ for all $x\in \Delta\supset K_{j-1}$ and $(p,t)\in P\times[0,1]$.

\smallskip
\item[\rm (f)]  $\Flux(u_p^{t,j})=\Flux(\tilde u_p^t)$ for all $(p,t)\in P\times[0,1]$.

\smallskip
\item[\rm (g)]  $\dist_{u_p^{t,j}}(x_0,bK_j)>j$ for all $(p,t) \in T_j\times[\frac1{j+1},1]$.
\end{enumerate}
Condition {\rm (A$_j$)} is implied by {\rm (a)} and {\rm (d)}; {\rm (B$_j$)} by {\rm (b)} and {\rm (e)}; {\rm (C$_j$)} by {\rm (g)}; and {\rm (E$_j$)} by {\rm (c)} and {\rm (f)}. Recall that {\rm (D$_j$)} is already granted.

This closes the inductive construction and completes the proof of Theorem \ref{th:main}.


\section{Surfaces of finite topological type}
\label{sec:finite}

\noindent
In this section, we prove Corollary \ref{cor:weak-eq-square}(b), assuming that the open Riemann surface $M$ is of finite topological type.  Recall that this means that $M$ has the homotopy type of a bouquet of finitely many circles or, equivalently by Stout's theorem \cite[Theorem 8.1]{Stout1965TAMS}, that $M$ can be obtained from a compact Riemann surface by removing a finite number of mutually disjoint points and closed discs.

A weak homotopy equivalence between spaces that are absolute neighbourhood retracts (ANRs) in the category of metrisable spaces is a genuine homotopy equivalence \cite[Theorem 15]{Palais1966}.  The spaces $\CMInf(M,\R^n)$ and $\RNCnf(M,\C^n)$ are ANRs \cite[Theorem 6.1]{ForstnericLarusson2019CAG}, so the following result settles the corollary.

\begin{theorem}  \label{th:anr}
Let $M$ be an open Riemann surface of finite topological type and $n\geq 3$.  The spaces $\CMInfc(M,\R^n)$ and $\RNCnfc(M,\C^n)$ are absolute neighbourhood retracts.
\end{theorem}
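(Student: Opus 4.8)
The plan is to follow the method introduced in \cite{Larusson2015PAMS} and applied in the proof of \cite[Theorem 6.1]{ForstnericLarusson2019CAG}, with Theorem \ref{th:main} supplying the h-principle that drives it. First I would dispose of the routine point that $\CMInfc(M,\R^n)$ and $\RNCnfc(M,\C^n)$ are separable and metrizable: since $M$ is second countable and locally compact and $\R^n$, $\C^n$ are separable metric spaces, the spaces $\mathscr{C}(M,\R^n)$ and $\mathscr{C}(M,\C^n)$ with the compact-open topology are separable and metrizable, and the spaces in question are subspaces of them. The ambient spaces $\CMInf(M,\R^n)$ and $\RNCnf(M,\C^n)$ are ANRs by \cite[Theorem 6.1]{ForstnericLarusson2019CAG}; this is where the hypothesis that $M$ is of finite topological type enters. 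Finally, Corollary \ref{cor:weak-eq-for-minimal}(a) and Corollary \ref{cor:weak-eq-square}(a) tell us that the inclusions $\CMInfc(M,\R^n)\hookrightarrow\CMInf(M,\R^n)$ and $\RNCnfc(M,\C^n)\hookrightarrow\RNCnf(M,\C^n)$ are weak homotopy equivalences.

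The crux is to verify the parametric approximate-extension property of these inclusions that the ANR criterion of \cite{Larusson2015PAMS} asks for: for every compact metrizable pair $(P,Q)$ with $Q$ closed in $P$, every continuous $g\colon P\to\CMInf(M,\R^n)$ with $g(Q)\subset\CMInfc(M,\R^n)$, and every neighbourhood $W$ of $g$ in $\mathscr{C}(P,\CMInf(M,\R^n))$, there are $\tilde g\in W$ with $\tilde g|_Q=g|_Q$ and $\tilde g(P)\subset\CMInfc(M,\R^n)$, together with a homotopy from $g$ to $\tilde g$ in $W$ relative to $Q$. Theorem \ref{th:main} yields exactly this. View $g$ as a map $u\colon M\times P\to\R^n$ all of whose slices $u_p$ are nonflat conformal minimal immersions, complete for $p\in Q$; choose a compact $K\subset M$ and $\epsilon>0$ so that $W$ contains the set of $h$ with $|h(p)(x)-g(p)(x)|<\epsilon$ for all $p\in P$ and $x\in K$; and apply Theorem \ref{th:main}. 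The resulting homotopy $u^t$ has $u^t_p$ nonflat conformal minimal for all $(p,t)$, $u^t_p=u_p$ on $(P\times\{0\})\cup(Q\times[0,1])$, $|u^t_p-u_p|<\epsilon$ on $K$, and $u^1_p$ complete for $p\in P\setminus Q$; hence $\tilde g\colon p\mapsto u^1_p$ and the homotopy $(p,t)\mapsto u^t_p$ have the required properties. For the null-curve inclusion one repeats the argument inside $\RNCnf(M,\C^n)$: since real parts of holomorphic null curves have vanishing flux, $\Flux(u_p)=0$ for every $p$, so Theorem \ref{th:main} applies with the constant flux homotopy $F^t_p\equiv 0$, whose hypotheses hold trivially; conclusion (v) then gives $\Flux(u^t_p)=0$ throughout, so every $u^t_p$ lies in $\RNCnf(M,\C^n)$, and being complete of zero flux, $u^1_p$ lies in $\RNCnfc(M,\C^n)$ for $p\in P\setminus Q$.

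With the ambient spaces being ANRs and the inclusions being weak homotopy equivalences enjoying the parametric approximate-extension property above, the criterion of \cite{Larusson2015PAMS} gives that $\CMInfc(M,\R^n)$ and $\RNCnfc(M,\C^n)$ are ANRs. I expect the main obstacle to be essentially bookkeeping: matching the present framework precisely to the hypotheses of the abstract criterion in \cite{Larusson2015PAMS}, and, inside its proof, the reduction of an ANR extension problem over a general, possibly noncompact, metrizable domain to the compact parameter spaces to which Theorem \ref{th:main} applies. This local-to-global passage, effected by an exhaustion and partition-of-unity argument, is the technical heart of the cited method and is invoked here rather than reproved.
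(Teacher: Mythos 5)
Your proposal identifies the right strategy and its main ingredients: the ambient spaces $\CMInf(M,\R^n)$ and $\RNCnf(M,\C^n)$ are ANRs by \cite[Theorem 6.1]{ForstnericLarusson2019CAG}, and the h-principle of Theorem \ref{th:main} gives the parametric approximate-extension property of the inclusion needed to transfer the ANR property to the subspace. This is the paper's approach. There are, however, two points where you invoke more than is available or more than is needed.

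First, you close by appealing to a ready-made ``criterion of \cite{Larusson2015PAMS}'' saying that an ambient ANR plus an inclusion with the parametric approximate-extension property yields an ANR subspace. No such abstract statement is cited from \cite{Larusson2015PAMS}; that reference introduces the \emph{method}, applied to a particular situation. The present paper makes the needed bridging result explicit as Proposition \ref{pr:subspace-is-anr}: if $X$ is a second-countable metric ANR and $Y\subset X$ is such that every continuous map from a finite polyhedral pair $(P,Q)$ into $(X,Y)$ can be pushed into $Y$ by a small homotopy relative to $Q$, then $Y$ is an ANR. Its proof goes through the Dugundji--Lefschetz characterisation of ANRs and handles the passage from possibly infinite, locally finite simplicial complexes to finite subcomplexes by an exhaustion $P_1\subset P_2\subset\cdots$ with cut-off functions $\lambda_n$ and a telescoping estimate $d(\phi_n,\phi_{n-1})<\tfrac12\epsilon_{n+1}$. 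You correctly anticipate that this local-to-global reduction is ``the technical heart,'' but you defer it to a reference where it is not stated in the required generality; in a complete proof it must be carried out, as the paper does.

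Second, you list the weak homotopy equivalence of the inclusions (Corollaries \ref{cor:weak-eq-for-minimal}(a) and \ref{cor:weak-eq-square}(a)) among the inputs to the ANR conclusion. That hypothesis is not used in Proposition \ref{pr:subspace-is-anr} and is not needed to prove that $\CMInfc(M,\R^n)$ and $\RNCnfc(M,\C^n)$ are ANRs; it enters only afterwards, in combination with Palais's theorem, to upgrade the weak homotopy equivalences to genuine homotopy equivalences in Corollaries \ref{cor:weak-eq-for-minimal}(b) and \ref{cor:weak-eq-square}(b). Apart from these two points — one a genuine gap in the justification, the other an extraneous hypothesis — the structure of your argument matches the paper's.
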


The theorem is an immediate consequence of the fact that $\CMInf(M,\R^n)$ and $\RNCnf(M,\C^n)$ are ANRs, the parametric h-principle from Theorem \ref{th:main}, and the following proposition.

\begin{proposition}  \label{pr:subspace-is-anr}
Let $(X, d)$ be a second-countable metric space and $Y$ be a subspace of $X$.  Suppose that whenever $P$ is a finite polyhedron, $Q$ is a subpolyhedron of $P$, $f:P\to X$ is a continuous map with $f(Q)\subset Y$, and $\epsilon>0$, there is a homotopy $f_t:P\to X$, $t\in [0,1]$, with $f_0=f$, $f_1(P)\subset Y$, and $f_t=f$ on $Q$ and $d(f_t,f)<\epsilon$ on $P$ for all $t\in [0,1]$.  Then, if $X$ is an ANR, so is $Y$.
\end{proposition}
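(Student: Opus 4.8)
The plan is to verify directly the defining property of an absolute neighbourhood retract in the metrizable category: given a metrizable space $Z$, a closed subspace $A\subseteq Z$, and a continuous map $g\colon A\to Y$, one must produce a continuous extension $\tilde g\colon V\to Y$ over some open neighbourhood $V$ of $A$ in $Z$. Since $X$ is an ANR and $g$ may be viewed as a map into $X$, it extends to a continuous map $G\colon W\to X$ on an open neighbourhood $W\supseteq A$. Everything then hinges on correcting $G$ near $A$ so that it takes values in $Y$, using only the hypothesis, which controls maps out of finite polyhedra.

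To carry out this correction I would use a nerve construction on the open set $W\setminus A$. Choose a locally finite open cover $\{U_\lambda\}$ of $W\setminus A$ which is canonical near $A$: each closure $\overline{U_\lambda}$ misses $A$, and $\mathrm{diam}\,U_\lambda\to 0$ as $U_\lambda$ approaches $A$, so that by continuity of $G$ the sets $G(U_\lambda)$ have small diameter and lie close to $g(A)\subseteq Y$ there. Let $N$ be the nerve of $\{U_\lambda\}$ and $\kappa\colon W\setminus A\to|N|$ the canonical map given by a subordinate partition of unity. For each vertex $v_\lambda$ pick $z_\lambda\in U_\lambda$; the hypothesis, applied with $P$ a point and $Q=\varnothing$, moves $G(z_\lambda)\in X$ to a point $p_\lambda\in Y$ that can be taken as close to $G(z_\lambda)$ as we wish, in particular extremely close when $U_\lambda$ is near $A$. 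Now build a map $h\colon|N|\to Y$ with $h(v_\lambda)=p_\lambda$ by induction over the skeleta of $N$: given $h$ on $\partial\sigma$ for a simplex $\sigma$, with $h(\partial\sigma)\subseteq Y$, first use that $X$, being a metrizable ANR, is locally equiconnected to extend $h|_{\partial\sigma}$ to a map $\sigma\to X$ of controlled (small) diameter, and then apply the hypothesis with $P=\sigma$ and $Q=\partial\sigma$ — legitimate because $h(\partial\sigma)\subseteq Y$ — to deform this extension, rel $\partial\sigma$, to a map $\sigma\to Y$. Because the deformations are relative to simplex boundaries they glue, over the locally finite complex $|N|$, to a continuous $h\colon|N|\to Y$. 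Finally put $\tilde g=g$ on $A$ and $\tilde g=h\circ\kappa$ on $W\setminus A$; this is defined on all of $V:=W$, takes values in $Y$ by construction, and is continuous across $A$ because near a point $a\in A$ the cover is so fine that all vertices $p_\lambda$ relevant to $\kappa$ are close to $g(a)$ and the errors accumulated in the skeletal induction are negligible.

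The main obstacle is the quantitative bookkeeping in this last step. One must coordinate the fineness of the canonical cover near $A$, the closeness of each $p_\lambda$ to $G(z_\lambda)$, and the error introduced when extending over each simplex — over all dimensions at once, since $N$ need not be finite-dimensional — so that $h$ lands in $Y$, is continuous, and $h\circ\kappa$ joins continuously to $g$ along $A$. The key points that make this work are that local equiconnectedness of $X$ furnishes a single local contractibility modulus valid in every dimension, taming the infinite-dimensional induction, and that choosing the cover summably fine towards $A$ forces $\tilde g(z)\to g(a)$ as $z\to a\in A$. The remaining ingredients — canonical covers and subordinate partitions of unity on metrizable spaces, the standard estimates for nerve maps, the extension characterization of metrizable ANRs — are classical. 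Alternatively one may invoke the characterization of metrizable ANRs as the spaces uniformly dominated by polyhedra and, for each open cover of $Y$, transport a domination of $X$ through the hypothesis; the nerve-theoretic content is the same.
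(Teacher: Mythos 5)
You take a genuinely different route from the paper. The paper proves that $Y$ is an ANR via the Dugundji--Lefschetz characterisation: given an open cover $\mathscr U$ of $Y$ obtained by restricting a cover $\mathscr U_0$ of $X$, the ANR property of $X$ supplies a refinement $\mathscr V_0$; a map $\phi_0$ from a vertex-containing subcomplex $B$ of a countable locally finite complex $A$ is then first extended to $\psi:A\to X$, and afterwards pushed into $Y$ by applying the hypothesis along a finite exhaustion $P_1\subset P_2\subset\cdots$ of $A$, gluing the successive corrections with cutoff functions. Your proposal instead verifies the extension characterisation of ANRs directly, by building a nerve of a fine canonical cover of $W\setminus A$ and performing a skeleton-by-skeleton Dugundji-type construction. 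Both arguments share the key idea --- extend into $X$ using that $X$ is an ANR, then deform into $Y$ using the hypothesis relative to boundaries --- but the Dugundji--Lefschetz route is cleaner because it hands you the polyhedral structure for free, whereas you must manufacture it and then control the accumulated errors over a possibly infinite-dimensional nerve. You correctly identify this as the crux, but the justification you offer is slightly off: local equiconnectedness is a statement about paths only and does not by itself yield the dimension-uniform local contractibility modulus your induction needs. What does yield it, for a metrizable ANR, is an embedding of $X$ as a neighbourhood retract in a convex subset of a normed linear space, where affine extensions over simplices have diameter independent of the simplex dimension and composing with the retraction preserves smallness. With that substitution, and the summability bookkeeping you flag actually carried out, your construction ought to succeed; but the paper's exhaustion argument sidesteps both the nerve and most of these estimates, which is why it is shorter.
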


\begin{proof}
We use the Dugundji-Lefschetz characterisation of the ANR property for second-countable metrisable spaces (\cite[Theorem 5.2.1]{vanMill1989}; for more background, see \cite{Larusson2015PAMS}).  Let $\mathscr U$ be an open cover of $Y$.  Take $\mathscr U$ to be the restriction to $Y$ of an open cover $\mathscr U_0$ of $X$.  We need to produce a refinement $\mathscr V$ of $\mathscr U$ such that if $A$ is a simplicial complex, countable and locally finite, with a subcomplex $B$ containing all the vertices of $A$, then every continuous map $\phi_0:B\to Y$ such that for each simplex $\sigma$ of $A$, $\phi_0(\sigma\cap B)$ lies in an element of $\mathscr V$, extends to a continuous map $\phi:A\to Y$ such that for each simplex $\sigma$ of $A$, $\phi(\sigma)$ lies in an element of $\mathscr U$.

Since $X$ is an ANR by assumption, the open cover $\mathscr U_0$ of $X$ has a refinement $\mathscr V_0$ as in the Dugundji-Lefschetz characterisation.  Let $\mathscr V$ be the restriction of $\mathscr V_0$ to $Y$.  Let $A$, $B$, and $\phi_0$ be as above.  We will show that $\phi_0$ extends to a continuous map $\phi:A\to Y$ such that for each simplex $\sigma$ of $A$, $\phi(\sigma)$ lies in an element of $\mathscr U$.  We do know that $\phi_0$ extends to a continuous map $\psi:A\to X$ such that for each simplex $\sigma$ of $A$, $\psi(\sigma)$ lies in an element of $\mathscr U_0$.

It suffices to prove the following.  Let $P_1 \subset P_2 \subset \cdots$ be finite subcomplexes exhausting $A$ with $P_n\subset \mathring P_{n+1}$ for all $n\geq 1$, and let $\epsilon_1, \epsilon_2, \ldots >0$.  Then there is a continuous extension $\phi:A\to Y$ of $\phi_0$ with $d(\phi,\psi)<\epsilon_n$ on $P_n\setminus P_{n-1}$ for all $n\geq 1$ (take $P_0=\varnothing$).  We may assume that $\epsilon_2 > \epsilon_3 >\cdots$ and $\epsilon_1<\tfrac 1 2\epsilon_3$.

For each $n\geq 1$, let $\lambda_n:A\to [0,1]$ be a continuous function with $\lambda_n=1$ on $P_n$ and with support in $P_{n+1}$.  

To start the inductive construction of $\phi$, find a homotopy $f_t:P_2 \to X$, $t\in [0,1]$, with $f_0=\psi$, $f_1(P_2)\subset Y$, and, for all $t\in [0,1]$, $f_t=\phi_0$ on $P_2\cap B$ and $d(f_t,\psi)<\epsilon_1$ on $P_2$.  Define $\phi_1:A\to X$ by $\phi_1(a)=f_{\lambda_1(a)}(a)$ for $a\in P_2$ and $\phi_1=\psi$ on $A\setminus P_2$.  Then $\phi_1$ is a continuous extension of $\phi_0$ with $\phi_1(P_1)\subset Y$ and $d(\phi_1, \psi)<\epsilon_1$ on~$A$.

Next, find a homotopy $f_t:P_3 \to X$, $t\in [0,1]$, with $f_0=\phi_1$, $f_1(P_3)\subset Y$, and, for all $t\in [0,1]$, $f_t=\phi_0$ on $P_3\cap B$, $f_t=\phi_1$ on $P_1$, and $d(f_t,\phi_1)<\tfrac 1 2\epsilon_3$.  Define $\phi_2:A\to X$ by $\phi_2(a)=f_{\lambda_2(a)}(a)$ for $a\in P_3$ and $\phi_2=\psi$ on $A\setminus P_3$.  Then $\phi_2$ is a continuous extension of $\phi_0$ with $\phi_2=\phi_1$ on $P_1$, $\phi_2(P_2)\subset Y$, and $d(\phi_2, \phi_1)<\tfrac 1 2\epsilon_3$ on~$A$.

Continuing in this way, we obtain continuous maps $\phi_n:A\to X$, $n\geq 1$, that extend $\phi_0$, such that $\phi_{n+1}=\phi_n$ on $P_n$, $\phi_n(P_n)\subset Y$, $\phi_n=\psi$ on $A\setminus P_{n+1}$, and, for $n\geq 2$, $d(\phi_n, \phi_{n-1})<\tfrac 1 2\epsilon_{n+1}$ on $A$.  The limit of $\phi_n$ as $n\to\infty$ is a continuous map $\phi:A\to Y$ that extends $\phi_0$.  Also, $d(\phi, \psi)=d(\phi_1, \psi)<\epsilon_1$ on $P_1$, 
\[ d(\phi, \psi) = d(\phi_2, \psi) \leq d(\phi_2, \phi_1) + d(\phi_1, \psi) < \tfrac 1 2 \epsilon_3 + \epsilon_1 < \epsilon_2 \]
on $P_2\setminus P_1$, and for $n\geq 3$,
\begin{eqnarray*}
d(\phi, \psi)  & = & d(\phi_n, \psi) \\
& \leq & d(\phi_n,\phi_{n-1}) + d(\phi_{n-1},\phi_{n-2}) + \cdots + d(\phi_2, \phi_1) + d(\phi_1, \psi) \\ 
& = & d(\phi_n,\phi_{n-1})+d(\phi_{n-1},\phi_{n-2}) 
\; < \; \tfrac 1 2\epsilon_{n+1} + \tfrac 1 2\epsilon_n \; < \; \epsilon_n
\end{eqnarray*}
on $P_n\setminus P_{n-1}$.
\end{proof}


\section{Full immersions}
\label{sec:full}

\noindent
In this final section, we show how to adapt our results to full immersions in place of nonflat immersions.  Recall that a conformal minimal immersion $u:M\to \R^n$ is said to be full if $\psi(u): M\to\boldA_*$ is full, meaning that the $\C$-linear span of $\psi(u)(M)$ is all of $\C^n$.  Likewise, a holomorphic null curve $\Phi:M\to\C^n$ is full if $\phi(\Phi): M\to\boldA_*$ is full. Here, the maps $\phi$ and $\psi$ are those introduced at the end of Section \ref{sec:intro}, just above Corollary \ref{cor:homotopy-type-of-complete}.  We denote by $\CMIf(M,\R^n)$ the subspace of  $\CMInf(M,\R^n)$ consisting of full immersions. The notation for the subspaces of full immersions appearing in the following theorem should be obvious.

\begin{theorem}  \label{th:all-results-work-for-full}
{\rm (a)}  The parametric h-principle in Theorem \ref{th:main} holds for full immersions in place of nonflat ones.

{\rm (b)}  The flux map $\Flux\colon \CMIfc(M,\R^n)\to H^1(M,\R^n)$ is a Serre fibration.

{\rm (c)}  Let $M$ be an open Riemann surface and $n\geq 3$.  The maps in the diagram
\[ \xymatrix{
\RNCfc(M,\C^n)  \ar@{^{(}->}[r] \ar@{^{(}->}[d]  &  \CMIfc(M,\R^n) \ar@{^{(}->}[d] \\ 
\RNCf(M,\C^n)   \ar@{^{(}->}[r]   &  \CMIf(M,\R^n)  \ar[d]_\psi  \\
\NCf(M,\C^n)  \ar[r]^\phi  \ar[u]^\Re  &  \Of(M,\boldA_*)  \ar@{^{(}->}[r]  & \Oscr(M,\boldA_*) \ar@{^{(}->}[r] & \Cscr(M,\boldA_*)\\ 
} \]
are weak homotopy equivalences.

{\rm (d)}  If $M$ is of finite topological type, then the maps are homotopy equivalences.
\end{theorem}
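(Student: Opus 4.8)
The plan is to revisit every proof in the paper that establishes a statement for nonflat immersions — the auxiliary Lemmas~\ref{lem:pair}, \ref{lem:distance}, and \ref{lem:flux}, Theorem~\ref{th:main}, Theorem~\ref{th:Serre}, Corollary~\ref{cor:weak-eq-square}, and Theorem~\ref{th:anr} — and to verify that ``nonflat'' may be replaced throughout by ``full''. The one fact that makes this routine is the following characterisation: a conformal minimal immersion $u\colon M\to\R^n$ is full if and only if the component functions of $\psi(u)=2\partial u/\theta$ are $\C$-linearly independent holomorphic functions. A full immersion is therefore nonflat, and by the identity principle this $\C$-linear independence can be tested on any one fixed nonempty open subset of $M$; equivalently, on the interior of any fixed closed disc $D\subset M$. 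Consequently fullness is a $\mathscr C^0$-open condition on $D$ — it amounts to the non-vanishing of some determinant $\det\big(\psi_j(u)(x_k)\big)_{j,k}$ for suitable points $x_1,\dots,x_n\in\mathring D$ — and hence it is preserved under the $\mathscr C^0$-small perturbations on a fixed core subdomain that are controlled at every stage of the constructions of Sections~\ref{sec:lemma}--\ref{sec:proof}.

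First I would record the full analogues of the three lemmas. Lemma~\ref{lem:pair} needs no change, since its hypothesis is the $\C$-linear independence of $f_d$ and $g_d$, not fullness of an immersion. In Lemma~\ref{lem:distance} one replaces ``nonflat'' by ``full'' in hypothesis and conclusion; there, nonflatness is used only to produce, for each parameter, a pair of components $\phi_{p,a}^t,\phi_{p,b}^t$ that are $\C$-linearly independent, which fullness supplies equally. The recursive L\'opez--Ros deformations alter only the two components indexed by $a(l),b(l)$, via a factor $h_p^t$ that is $\mathscr C^0$-close to $1$ on $K_{l-1}$ and large only on the labyrinth $\Omega\subset\mathring L\setminus K_{l-1}$; so, fixing a closed disc $D\subset\mathring K\subset\mathring K_{l-1}$, the components of $\psi(u_p^{t,l})$ remain uniformly close on $D$ to those of $\psi(u_p^{t,l-1})$, whence their $\C$-linear independence on $\mathring D$ — equivalently, by the identity principle, fullness of $u_p^{t,l}$ on $L$ — is inherited through the induction. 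Lemma~\ref{lem:flux} is adapted the same way: replace ``nonflat generalised conformal minimal immersion'' by its full counterpart and invoke the full version of \cite[Theorem~4.1]{ForstnericLarusson2019CAG}, noting that its ingredient \cite[Lemma~3.1]{ForstnericLarusson2019CAG} concerns paths in $\boldA_*$ and is insensitive to the distinction. The inductive proof of Theorem~\ref{th:main} in Section~\ref{sec:proof} then carries over verbatim: condition (D$_j$) now reads that a conformal harmonic map sufficiently close to $u_p^{t,j-1}$ on $K_{j-1}$ restricts to a \emph{full} immersion, again because fullness is tested on the fixed disc $D$, and the limit immersions are full for the same reason. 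This proves part~(a).

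Parts (b), (c), and (d) then follow exactly as their nonflat counterparts do. Part~(b) is deduced from part~(a) by the verbatim argument of Theorem~\ref{th:Serre}. For part~(c), the ``completion'' maps $\RNCfc\hookrightarrow\RNCf$, $\CMIfc\hookrightarrow\CMIf$, $\RNCfc\hookrightarrow\CMIfc$ and the map $\CMIfc(M,\R^n)\to\Cscr(M,\boldA_*)$ are obtained from part~(a) by the flux-controlled homotopies used for Corollaries~\ref{cor:weak-eq-square}(a) and~\ref{cor:homotopy-type-of-complete} (e.g.\ $K=\varnothing$ and $F^t(p)=(1-t)\Flux(u_p)$), while the remaining maps — those among the non-complete spaces $\NCf,\RNCf,\CMIf,\Of,\Oscr,\Cscr$, including the Hilbert-transform equivalence $\Re\colon\NCf\to\RNCf$ — are the full versions of the h-principle of \cite{ForstnericLarusson2019CAG}. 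For part~(d), $\CMIf(M,\R^n)$ and $\RNCf(M,\C^n)$ are ANRs by \cite[Theorem~6.1]{ForstnericLarusson2019CAG}, and feeding part~(a) into Proposition~\ref{pr:subspace-is-anr} (with $X=\CMIf(M,\R^n)$, $Y=\CMIfc(M,\R^n)$, and likewise for the null-curve spaces) shows that $\CMIfc(M,\R^n)$ and $\RNCfc(M,\C^n)$ are ANRs; the weak homotopy equivalences of part~(c) between ANRs are then genuine homotopy equivalences by \cite[Theorem~15]{Palais1966}.

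The only point that is not pure bookkeeping, and hence the main obstacle, is confirming that fullness survives the completeness step of Lemma~\ref{lem:distance}, where the deforming factor is far from $1$ on the labyrinths. The resolution, as indicated above, is to monitor $\C$-linear independence of the components of $\psi(u)$ only on a fixed core disc $D$, on which every deformation in the construction stays $\mathscr C^0$-small, and to use the identity principle to transport this independence back out to all of $L$ (respectively $M$). A secondary, purely referential matter is to confirm that the results of \cite{ForstnericLarusson2019CAG} invoked above — the non-complete parametric h-principle and the ANR property — are proved there for full immersions as well as for nonflat ones; they are.
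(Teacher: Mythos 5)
Your proof is correct, but it takes a genuinely different and substantially more laborious route than the paper's. For part (a) the paper does not re-prove any of the auxiliary lemmas: it applies Theorem~\ref{th:main} (the nonflat version) as a black box to the given family of full immersions and observes that, by choosing the compact $K$ large enough to contain a finite set that certifies fullness of the whole family (such a set exists uniformly in $p$ by compactness of $P$ and the identity principle) and taking $\epsilon$ small enough, conclusion~(iii) already forces every output $u_p^t$ to be full. No bookkeeping through the recursion is required, since fullness only has to be checked on the final homotopy. You instead carry ``full'' through the entire inductive construction, restating and re-verifying Lemmas~\ref{lem:pair}, \ref{lem:distance}, and \ref{lem:flux} and the induction in Section~\ref{sec:proof} with ``full'' in place of ``nonflat'' at each stage. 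Both arguments rest on the same key fact --- fullness is a $\mathscr C^0$-open condition (on the level of $\psi(u)$, hence on $u$ via harmonic regularity) testable on a fixed disc or finite set --- but the paper invokes it once at the end, whereas you track it at every intermediate step; the price you pay is explaining, at each L\'opez--Ros deformation, why the accumulated error on the core disc stays small enough. Your version does yield full analogues of the intermediate lemmas as standalone statements, which is a coherent by-product though unnecessary here. One referential inaccuracy: the ANR property of $\CMIf(M,\R^n)$ and $\RNCf(M,\C^n)$ does not (in the present paper's account) come directly from \cite[Theorem~6.1]{ForstnericLarusson2019CAG}, which treats the nonflat spaces; the paper instead passes from the nonflat case to the full case via the elementary fact that an open subspace of an ANR is an ANR, fullness being an open condition. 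Parts (b)--(d) are handled essentially as in the paper, modulo the fact that the paper spells out a factorisation of $\phi\colon\NCf\to\Of$ and adapts the general position theorem of \cite{ForstnericLarusson2019CAG} to full maps to get the weak homotopy equivalence $\Of(M,\boldA_*)\hookrightarrow\Oscr(M,\boldA_*)$, a step you gloss over.
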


First, we note that if $u_p$ in Theorem \ref{th:main} is full for all $p\in P$, then sufficiently close approximation on a neighbourhood of a suitable finite subset of $M$ using (iii) implies that $u_p^t$ is full for all $(p,t)\in P\times[0,1]$.  Thus Theorem \ref{th:main} holds for full immersions in place of nonflat ones and (b) follows immediately.  The parametric h-principle \cite[Theorem 4.1]{ForstnericLarusson2019CAG} holds for full immersions by the same argument.  It follows that the inclusions in the square
\[ \xymatrix{
\RNCfc(M,\C^n)  \ar@{^{(}->}[r] \ar@{^{(}->}[d]  &  \CMIfc(M,\R^n) \ar@{^{(}->}[d] \\ 
\RNCf(M,\C^n)   \ar@{^{(}->}[r]   &  \CMIf(M,\R^n) } \]
are weak homotopy equivalences.

As noted in \cite{ForstnericLarusson2019CAG} for nonflat immersions, by continuity in the compact-open topology of the Hilbert transform that takes $u\in \RNCf(M, \C^n)$ to its harmonic conjugate $v$ with $v(x) = 0$, where $x \in M$ is any chosen base point, the real part map $\Re:\NCf(M, \C^n)\to \RNCf(M, \C^n)$ is a homotopy equivalence.  To see that the map $\phi:\NCf(M,\C^n)\to \Of(M,\boldA_*)$ is a weak homotopy equivalence, factor it as
\[ \NCf(M,\C^n)\to \{\Phi\in \NCf(M,\C^n):\Phi(p)=0\} \\ \overset\phi\to \Oscr_{\mathrm{full},0}(M, \boldA_*) \hookrightarrow \Of(M,\boldA_*), \]
where $\Oscr_{\mathrm{full},0}(M, \boldA_*)$ denotes the space of full holomorphic maps $M\to \boldA_*$ with vanishing periods, and note that the first map $\Phi\mapsto \Phi-\Phi(p)$ is a homotopy equivalence, the second a homeomorphism, and the third a weak homotopy equivalence by the parametric h-principle \cite[Theorem 5.3]{ForstnericLarusson2019CAG} adapted to full maps in place of nonflat maps in the way described above.  To complete the proof of Theorem \ref{th:all-results-work-for-full}(c), the general position theorem \cite[Theorem 5.4]{ForstnericLarusson2019CAG} is easily adapted to full maps so as to imply that the inclusion $\Of(M,\boldA_*) \hookrightarrow \Oscr(M, \boldA_*)$ is a weak homotopy equivalence.  In fact, the proof of \cite[Theorem 5.4]{ForstnericLarusson2019CAG} yields the following stronger general position theorem.

\begin{theorem}  \label{th:general-position}
Let $M$ be an open Riemann surface, $K\subset M$ be compact, $P$ be a compact metric space, $Q$ be a closed subspace of $P$, $f: P\to \mathscr O(M,\boldA_*)$ be a continuous map, and $\epsilon>0$.  There is a homotopy $f^t : P\to \mathscr O(M,\boldA_*)$, $t\in [0,1]$, such that:
\begin{enumerate}[\rm (1)]
\item  $f_p^t =f_p$ for all $(p,t)\in (P\times \{0\}) \cup (Q\times [0,1])$.
\item  $f_p^t\in \mathscr O(M, \boldA_*)$ is full for all $(p,t) \in (P\setminus Q)\times (0,1]$.
\item  $\lvert f_p^t(x) - f_p(x) \rvert <\epsilon$ for all $x\in K$ and $(p,t) \in P\times [0,1]$.
\end{enumerate}
\end{theorem}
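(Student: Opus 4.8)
The plan is to run the proof of \cite[Theorem 5.4]{ForstnericLarusson2019CAG} through the same recursive scheme used for Theorem \ref{th:main} in Section \ref{sec:proof}, so as to obtain fullness instantaneously (for every $t>0$) off the closed set $Q$. First I would reduce to a compact problem: since $M$ is noncompact, fix a smoothly bounded $\mathscr O(M)$-convex compact domain $L$ with $K\subset\mathring L$. Because the $\C$-linear span of the image of a map can only grow, a holomorphic map $M\to\boldA_*$ whose restriction to $L$ has image spanning $\C^n$ is full, and the condition that $f(L)$ span $\C^n$ is open in the topology of uniform convergence on $L$. Hence it suffices to deform the family so that $f_p^t(L)$ spans $\C^n$ for all $(p,t)\in(P\setminus Q)\times(0,1]$, and, $\boldA_*$ being an Oka manifold with dominating sprays (it is homogeneous), all the deformations below can be realised by global holomorphic maps $M\to\boldA_*$ via the parametric Oka property with approximation.

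As in Section \ref{sec:proof}, using that $P$ is a compact metric space write $P\setminus Q=\bigcup_{j\in\N}T_j$ with each $T_j$ closed and $T_j\subset\mathring T_{j+1}$, and exhaust $M$ by smoothly bounded $\mathscr O(M)$-convex compact domains $K=:K_0\subset K_1\subset\cdots$ with $K_{j-1}\subset\mathring K_j$. One then constructs $\epsilon_j>0$ with $\sum_j\epsilon_j<\epsilon$ and homotopies $f^{t,j}\colon M\times P\to\boldA_*$ of holomorphic maps such that $f^{t,j}=f$ on $(P\times\{0\})\cup(Q\times[0,1])$, $|f_p^{t,j}-f_p^{t,j-1}|<\epsilon_j$ on $K_{j-1}$, and $f_p^{t,j}(K_j)$ spans $\C^n$ for all $(p,t)\in T_j\times[\tfrac{1}{j+1},1]$. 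Summability of the $\epsilon_j$ yields, by the Cauchy estimates, a limit homotopy $f^t\colon M\times P\to\boldA_*$; given $(p,t)\in(P\setminus Q)\times(0,1]$, pick $j$ with $p\in T_j$ and $\tfrac{1}{j+1}<t$, note that $f_p^{t,j}(K_j)$ spans $\C^n$ and that the later perturbations, being summably small on $K_j$, preserve this open condition, so $f_p^t$ is full. Conditions (1) and (3) are built into the construction.

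The inductive step, which is the heart of the matter, is a parametric general position statement on $K_j$: perturb $f^{t,j-1}$ by an arbitrarily small amount on $K_{j-1}$ — only near $K_j\setminus\mathring K_{j-1}$ and leaving it fixed over $Q$ — so that the $n$ component functions become $\C$-linearly independent on $K_j$ for all parameters in $T_j\times[\tfrac{1}{j+1},1]$. I would fix points $q_1,\dots,q_m\in\mathring K_j\setminus K_{j-1}$ with $m$ large and, by a Mergelyan/Runge construction of the type used in Claim \ref{cl:ML-1} but with target the Oka manifold $\boldA_*$, build a spray $f^{t,j-1}_{p,\zeta}\colon M\to\boldA_*$, $\zeta$ in a ball $0\in B\subset\C^N$, with $f^{t,j-1}_{p,0}=f^{t,j-1}_p$, close to $f^{t,j-1}_p$ on $K_{j-1}$ for $\zeta$ near $0$, continuous in $(p,t)$ and holomorphic in $\zeta$, and with the joint evaluation $\zeta\mapsto\bigl(f^{t,j-1}_{p,\zeta}(q_i)\bigr)_{i=1}^m\in(\boldA_*)^m$ submersive at $\zeta=0$. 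Linear independence of these $m$ values forces $f^{t,j-1}_{p,\zeta}(K_j)$ to span $\C^n$, and the locus $\Sigma\subset(\boldA_*)^m$ of $m$-tuples that fail to span $\C^n$ is a complex-analytic subvariety whose codimension grows with $m$ (it is cut out by the vanishing of all $n\times n$ minors, and $\boldA_*$ itself spans $\C^n$); hence the exceptional set $\Sigma_{p,t}\subset B$, the preimage of $\Sigma$ under the submersion, has high codimension and highly connected complement.

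The remaining — and most delicate — point, which I expect to be the main obstacle, is parametric transversality: producing a continuous $\zeta\colon P\times[0,1]\to B$ that vanishes on $(P\times\{0\})\cup(Q\times[0,1])$, takes a value outside $\Sigma_{p,t}$ over $T_j\times[\tfrac{1}{j+1},1]$, and interpolates for intermediate $t$. When $P$ is a finite polyhedron — which is all that is needed for the weak homotopy equivalence applications — choosing $m>\dim P$ makes the fibres of $B\setminus\bigcup_{p,t}\Sigma_{p,t}$ connected through dimension $\dim P$, and a standard obstruction/selection argument supplies $\zeta$; for a general compact metric $P$ one lets the number of free spray directions tend to infinity, so that the exceptional set acquires infinite codimension and its complement becomes weakly contractible, and argues in the same way. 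Setting $f^{t,j}_p:=f^{t,j-1}_{p,\zeta(p,t)}$, and applying a final parametric Oka--Weil approximation if necessary to keep it globally holomorphic on $M$, close on $K_{j-1}$, and spanning $\C^n$ on $K_j$ over the relevant parameters, completes the inductive step and hence the proof.
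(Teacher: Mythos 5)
The paper does not actually prove Theorem~\ref{th:general-position}; it is introduced by the single sentence asserting that ``the proof of \cite[Theorem 5.4]{ForstnericLarusson2019CAG} yields the following stronger general position theorem,'' with no details supplied. So there is no in-paper argument to compare against, and your proposal is an attempt to reconstruct an argument the authors only cite. Your basic strategy --- build a period-dominating/evaluation-dominating spray of maps into $\boldA_*$ via Oka theory, observe that the failure-to-span locus in $(\boldA_*)^m$ is an analytic subvariety of codimension growing with $m$, and select a continuous parameter $\zeta(p,t)$ avoiding its preimage while vanishing on $(P\times\{0\})\cup(Q\times[0,1])$ --- is the right circle of ideas and is consistent with the style of general position arguments in the cited reference.

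Two concrete criticisms. First, you import the double exhaustion (of $M$ by $K_j$ and of $P\setminus Q$ by $T_j$) from the proof of Theorem~\ref{th:main} in Section~\ref{sec:proof}, but this machinery is there because completeness is a property at infinity that must be secured on successively larger annuli. Fullness, as you yourself observe at the outset, is verified on a single compact domain $L\supset K$ and is an \emph{open} condition in the compact-open topology. Having made that reduction, running an infinite induction over an exhaustion of $M$ is redundant: one spray construction on $L$, together with a single application of the parametric Oka property to pass from $L$ to $M$, ought to suffice. You also do not need to chop $P\setminus Q$ into closed $T_j$; a continuous function $\rho\colon P\to[0,1]$ with $\rho^{-1}(0)=Q$ (which exists since $P$ is metric) lets you set $\zeta(p,t)=t\,\rho(p)\,\zeta_0(p,t)$ for a suitable selection $\zeta_0$, so that $\zeta$ vanishes exactly on $(P\times\{0\})\cup(Q\times[0,1])$ in one shot. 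Also, a small slip: you want the $m$ sampled values to \emph{span} $\C^n$, not to be linearly independent (for $m>n$ the latter is impossible).

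Second, and this is the genuine gap: your treatment of the selection problem for a general compact metric $P$ does not go through as stated. The codimension-versus-dimension count works when $P$ is a finite polyhedron, and you say so, but ``letting the number of free spray directions tend to infinity'' so that the exceptional set ``acquires infinite codimension'' is not a rigorous argument --- there is no sensible limit object, and the union $\bigcup_{(p,t)}\Sigma_{p,t}$ over an infinite-dimensional parameter space need not be thin. The theorem as stated allows arbitrary compact metric $P$, so this must be addressed, for instance by replacing the transversality count with a Michael-type selection from the open, connected, and dense fibres $B\setminus\Sigma_{p,t}$, or by reducing to the finite-polyhedral case via the nerve of a fine open cover of $P$; your sketch provides neither. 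For the paper's actual applications (weak homotopy equivalences, and the ANR argument in Section~\ref{sec:finite}) only finite polyhedra arise as parameter spaces, so the gap is harmless for those purposes, but it leaves the theorem in its stated generality unproved.
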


Finally, we assume that $M$ is of finite topological type.  The fact that an open subspace of an ANR is an ANR implies that the spaces $\RNCf(M,\C^n)$, $\CMIf(M,\R^n)$, $\NCf(M,\C^n)$, and $\Of(M,\boldA_*)$ are ANRs.  Arguing as in Section \ref{sec:finite}, we conclude that $\RNCfc(M,\C^n)$ and $\CMIfc(M,\R^n)$ are also ANRs.  This completes the proof of Theorem \ref{th:all-results-work-for-full}.


\subsection*{Acknowledgements}
The authors gratefully acknowledge past joint work with Franc Forstneri\v c on which this paper is based.

\noindent
A.\ Alarc\'on was partially supported by the State Research Agency (AEI) and European Regional Development Fund (FEDER) via the grant no.\ MTM2017-89677-P, PID2020-117868GB-I00, and PID2023-150727NB-I00, and the ``Maria de Maeztu'' Excellence Unit IMAG, reference CEX2020-001105-M, funded by MCIN/AEI/10.13039/501100011033/;  the Junta de Andaluc\'ia grant no. P18-FR-4049; and the Junta de Andaluc\'ia - FEDER grant no. A-FQM-139-UGR18; Spain. 




\medskip
\noindent Antonio Alarc\'{o}n

\noindent Departamento de Geometr\'{\i}a y Topolog\'{\i}a e Instituto de Matem\'aticas (IMAG), Universidad de Granada, Campus de Fuentenueva s/n, E--18071 Granada, Spain

\noindent  e-mail: {\tt alarcon@ugr.es}

\bigskip
\noindent Finnur L\'arusson

\noindent School of Mathematical Sciences, University of Adelaide, Adelaide SA 5005, Australia

\noindent  e-mail: {\tt finnur.larusson@adelaide.edu.au}

\end{document}